\newtheorem{theorem}{Theorem}[section]
\newtheorem{lemma}[theorem]{Lemma}
\newtheorem{corollary}[theorem]{Corollary}
\newtheorem{proposition}[theorem]{Proposition}
\theoremstyle{definition}
\newtheorem{definition}[theorem]{Definition}
\newtheorem{examplex}[theorem]{Example} 
\newtheorem{remarkx}[theorem]{Remark}   
\newtheorem{question}[theorem]{Question}
\newtheorem*{ack}{Acknowledgment}
\newenvironment{remark}%
{\pushQED{\qed}\begin{remarkx}}%
{\popQED\end{remarkx}}
\newenvironment{example}%
{\pushQED{\qed}\begin{examplex}}%
{\popQED\end{examplex}}
\newcommand{\abs}[1]{{\lvert #1 \rvert}}
\newcommand{\A}{{\mathcal A}}
\newcommand{\B}{{\mathcal B}}
\newcommand{\XX}{{\mathcal X}}  
\newcommand{\Pl}{{\mathcal P}}  
\newcommand{\N}{{\mathbb N}}
\newcommand{\Z}{{\mathbb Z}}
\newcommand{\Q}{{\mathbb Q}}
\newcommand{\C}{{\mathbb C}}
\newcommand{\F}{{\mathbb F}}
\renewcommand{\k}{\Bbbk}
\newcommand{\K}{{\mathbb K}}
\renewcommand{\P}{{\mathbb P}}
\newcommand{\bo}{{\mathbf 1}}
\newcommand{\G}{\widehat{G}}  
\newcommand{\V}{{\mathcal V}}  
\newcommand{\Mat}{{\mathcal M}} 
\newcommand{\FF}{{\mathcal F}} 
\newcommand{\PMat}{{\mathcal M^+}} 
\newcommand{\unit}{{\mathbbm 1}} 
\newcommand{\id}{{\mathrm{id}}} 
\newcommand{\Ab}{{h}}     
\newcommand{\PAb}{{\overline{h}}}     
\DeclareMathOperator{\rank}{rank}
\DeclareMathOperator{\Hom}{Hom}
\DeclareMathOperator{\Gal}{Gal}
\DeclareMathOperator{\pc}{\|}    
\DeclareMathOperator{\ch}{char}  
\DeclareMathOperator{\poin}{Poin} 
\DeclareMathOperator{\plugin}{\circ} 
\DeclareMathOperator{\orb}{orb}
\DeclareMathOperator{\ab}{ab}
\DeclareMathOperator{\im}{im}
\DeclareMathOperator{\pr}{pr}
\newcommand{\surj}{\twoheadrightarrow}
\newcommand{\inj}{\hookrightarrow}
\newcommand{\co}{\!:\!}
\def\set#1{\{#1\}}
\def\PC#1#2{{\sideset{_{\!#1\!}}{_{\!#2}}\pc}}  
\begin{document}

\title[{M}ilnor fibrations of arrangements]%
{Multinets, parallel connections, and {M}ilnor fibrations 
of arrangements}

\author[G. Denham]{Graham Denham$^1$} 
\address{Department of Mathematics, University of Western Ontario,
London, ON  N6A 5B7}
\email{\href{mailto:gdenham@uwo.ca}{gdenham@uwo.ca}}
\urladdr{\href{http://www.math.uwo.ca/~gdenham}%
{http://www.math.uwo.ca/\~{}gdenham}}

\author[A. Suciu]{Alexander~I.~Suciu$^2$}
\address{Department of Mathematics,
Northeastern University,
Boston, MA 02115}
\email{\href{mailto:a.suciu@neu.edu}{a.suciu@neu.edu}}
\urladdr{\href{http://www.math.neu.edu/~suciu/}%
{http://www.math.neu.edu/\~{}suciu}}

\thanks{$^1$Supported by NSERC}
\thanks{$^2$Partially supported by NSF grant DMS--1010298}

\subjclass[2010]{Primary 
32S55, 
55N25; 
Secondary 
32S22, 
57M10, 
18D50
}

\begin{abstract} 
The characteristic varieties of a space are the jump loci for 
homology of rank $1$ local systems.  The way in which the 
geometry of these varieties may vary with the characteristic 
of the ground field is reflected in the homology of finite cyclic covers.  
We exploit this phenomenon to detect torsion in the homology 
of Milnor fibers of projective hypersurfaces. 
One tool we use is the interpretation of the degree $1$ characteristic 
varieties of a hyperplane arrangement complement in terms of 
orbifold fibrations and multinets on the corresponding matroid.  
Another tool is a polarization construction, based on the parallel 
connection operad for matroids. Our main result gives a 
combinatorial machine for producing arrangements whose 
Milnor fibers have torsion in homology.  In particular, 
this shows that Milnor fibers of hyperplane arrangements do 
not necessarily have a minimal cell structure.  
\end{abstract}

\keywords{Hyperplane arrangement, Milnor fibration, characteristic 
variety, orbifold fibration, cyclic cover, multiarrangement, multinet, 
deletion, parallel connection operad, polarization}

\maketitle
\setcounter{tocdepth}{1}
\tableofcontents

\section{Introduction}
\label{sect:intro}

\subsection{The Milnor fibration}
\label{intro:mf}

A classical construction, due to Milnor \cite{Mi}, associates 
to every homogeneous polynomial $f\in \C[z_1,\dots , z_\ell]$ 
a fiber bundle, with base space $\C^*=\C\setminus \set{0}$, 
total space the complement in $\C^{\ell}$ to the hypersurface 
defined by $f$, and projection map 
$f\colon \C^{\ell} \setminus f^{-1}(0)\to \C^*$.

The Milnor fiber $F=f^{-1}(1)$ has the homotopy type of a finite,
$(\ell-1)$-dimensional CW-complex, while the monodromy of the 
fibration, $h\colon F\to F$, is given by $h(z)=e^{2\pi i/n} z$.
If $f$ has an isolated singularity at the origin, then $F$ is 
homotopic to a bouquet of $(\ell-1)$-spheres, 
whose number can be determined by algebraic means. 
In general, though, it is a rather hard problem to compute 
the homology groups of the Milnor fiber, even in the case when 
$f$ completely factors into distinct linear forms, so that the 
hypersurface $\set{f=0}$ is a hyperplane arrangement.

Building on our previous work with D.~Cohen \cite{CDS03}, 
we show there exist projective hypersurfaces (indeed, hyperplane 
arrangements) whose complements have torsion-free 
homology, but whose Milnor fibers have torsion in homology. 
Our main result can be summarized as follows. 

\begin{theorem}
\label{thm:intro1}
For every prime $p\ge 2$, there is a hyperplane arrangement 
whose Milnor fiber has non-trivial $p$-torsion in homology. 
\end{theorem}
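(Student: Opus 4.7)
The plan is to build, for each prime $p$, an explicit hyperplane arrangement $\A_p$ by combining the two ingredients advertised in the abstract: the combinatorial description of degree-$1$ characteristic varieties via multinets, and the polarization operation built from the parallel connection operad on matroids.

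The starting point is that the Milnor fiber $F$ of an arrangement $\A$ with defining polynomial of degree $n$ is the $n$-fold cyclic cover of the projectivized complement $U(\A)$ classified by the homomorphism $\pi_1(U(\A))\to \Z/n\Z$ sending each meridian to $1$. By a standard transfer argument, integral $p$-torsion in $H_*(F;\Z)$ is detected by an excess of $H_*(F;\F_p)$ over $H_*(F;\Q)$, which in turn is controlled by components of the characteristic variety $\V_1(U(\A);\F_p)$ through characters of order $p$ that do \emph{not} arise from components of $\V_1(U(\A);\C)$. In other words, to produce $p$-torsion it suffices to find an arrangement for which an orbifold-type fibration exists ``only in characteristic $p$.''

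The first step is therefore to exhibit, for each prime $p$, a matroid carrying a \emph{modular} multinet: combinatorial data of multinet type that corresponds, under the translation announced in the introduction, to a pencil on $U(\A)$ yielding a rank-$1$ local system with values in $\mu_p$ that is resonant only over $\F_p$. Because classical complex multinets are extremely rare, this step must be genuinely combinatorial, independent of $\C$-realizability of the multinet itself. I expect the paper to produce such modular multinets either from graphic matroids associated with a cycle of length divisible by $p$, or more generally from deletions of known arrangements in which the mod-$p$ reduction of a Ceva-type pencil acquires extra structure. The second step is to feed this matroidal input into the polarization construction: the parallel connection operad produces from the modular seed a realizable complex arrangement $\A_p$, essential in some $\C^\ell$, whose matroid inherits the modular multinet. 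One then argues, via a Mayer--Vietoris or Leray spectral sequence comparing integral and $\F_p$ coefficients along the $n_p$-fold cyclic cover $F_{\A_p}\to U(\A_p)$, that the modular resonance component lifts to genuine $p$-torsion in $H_*(F_{\A_p};\Z)$.

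The main obstacle is \emph{uniformity in $p$}: classical multinets produce torsion only for a handful of small primes (the Hessian arrangement gives $p=3$, while $p=2$ is the content of the earlier work with Cohen), so the combinatorial machine must manufacture, for every prime, a modular multinet that (i) lives on some matroid realizable over $\C$ after polarization and (ii) survives the polarization as a positive-dimensional component of $\V_1$ translated by an order-$p$ character. A secondary difficulty is quantitative: the spectral-sequence comparison shows $p$-torsion \emph{can} appear, but to show it \emph{does} one has to rule out cancellation, presumably by matching the Euler-characteristic contribution of the modular resonance component against the known integral Betti numbers of $F_{\A_p}$, which can be extracted from the parallel-connection description of the arrangement.
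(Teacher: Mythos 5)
Your high-level outline matches the paper's advertised strategy, but the core mechanism is misidentified in a way that leaves the argument without a working engine. You posit a ``modular multinet'' --- combinatorial data yielding a pencil that is ``resonant only over $\F_p$,'' independent of $\C$-realizability --- and worry about manufacturing one for every prime. The paper needs no such object. For each prime $p$ it uses an honest complex multinet on the monomial reflection arrangement $\A(p,1,3)$, given by the factorization $x^p(y^p-z^p)\cdot y^p(x^p-z^p)\cdot z^p(x^p-y^p)$; this is a \emph{pointed} multinet with distinguished hyperplane $H=\set{x=0}$ of multiplicity $p$, and it exists over $\C$ for every prime. Deleting $H$ turns the associated large pencil into a \emph{small} orbifold pencil with a multiple fiber of multiplicity $p$, whose contribution to $\V^1$ of the deleted complement is a one-dimensional torus $T$ translated by a character $\rho$ of order $p$. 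The characteristic-$p$ phenomenon is then purely group-theoretic: since $\Hom(\Z_p,\k^*)$ is trivial when $\ch(\k)=p$, the translate $\rho T$ collapses onto $T$ itself, so $T_{\k}\subseteq\V^1(\cdot,\k)$, while over $\C$ the untranslated torus $T$ meets $\V^1(\cdot,\C)$ in only finitely many points. Any cyclic subgroup of $T$ of large order $r$ coprime to $p$ avoiding those points produces a homology jump mod $p$, hence $p$-torsion, by the universal coefficient theorem --- no spectral sequence and no cancellation issue arise, because one simply compares $\dim_{\F_p}$ with $\dim_{\Q}$ using the direct-sum decomposition of the cover's homology into local-system homologies.

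Second, your proposal skips the two steps that actually connect this phenomenon to a Milnor fiber. The torsion first appears in an auxiliary $r$-fold cyclic cover of $U(\A')$ which is generally \emph{not} the Milnor fiber cover (the latter is classified by the fixed diagonal character, which need not meet the translated component at all). The paper bridges this gap with a domination argument: one chooses a multiplicity vector $m'$ so that the unreduced Milnor fiber $F(\A',m')$ of the multiarrangement factors through the torsion-carrying $r$-fold cover, and a transfer argument pushes the torsion up because $p$ does not divide $\deg Q(\A',m')$. Only then does polarization enter, and its role is not the one you assign it: it does not serve to realize a matroid carrying the multinet, but rather converts the multiarrangement $(\A',m')$ into a larger \emph{simple} arrangement $\B=\A'\pc m'$ whose projectivized complement is diffeomorphic to the product $U(\A')\times\prod_{K}P_{m'_K}$ and whose reduced Milnor fiber pulls back to $F(\A',m')$; the product formula for characteristic varieties then shows the torsion survives, in the shifted degree $1+n_3(\A',m')$. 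Without the deletion step, the domination step, and the correct reading of polarization, your outline does not assemble into a proof.
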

  
This resolves a problem posed by Randell~\cite[Problem 7]{Ra11},
who conjectured that Milnor fibers of hyperplane arrangements 
have torsion-free homology.  Our examples also give a 
refined answer to the question posed by Dimca and 
N\'emethi~\cite[Question~3.10]{DN04}: torsion is possible 
even if the hypersurface is defined by a reduced equation.

Additionally, our result shows that Milnor fibers of hyperplane 
arrangements do not necessarily have a minimal cell structure.  
Again, this stands in marked contrast with arrangement complements, 
which all admit perfect Morse functions.

Our method also allows us to compute the homomorphism induced 
in homology by the monodromy of the Milnor fibration, with coefficients 
in a field of characteristic not dividing the order of the monodromy.  
For some related recent work in this direction, we refer to 
\cite{BDS11, Li12, Wi2}.

A much-studied question in the subject is whether the Betti 
numbers of the Milnor fiber of an arrangement  $\A$ are determined 
by the intersection lattice, $L(\A)$. While we do not address this 
question directly here, our result raises a related, and arguably 
even subtler question.

\begin{question}
\label{q:comb tors}
Is the torsion in the homology of the Milnor fiber of a hyperplane 
arrangement combinatorially determined? 
\end{question}

As a preliminary question, one may also ask: can one predict 
the existence of torsion in the homology of the Milnor fiber $F(\A)$ 
simply by looking at $L(\A)$?  As it turns out, under fairly general 
assumptions, the answer is yes: if $L(\A)$ satisfies certain very 
precise conditions, then automatically $H_*(F(\A),\Z)$ will 
have non-zero torsion, in a combinatorially determined degree.

\subsection{Outline of the proof}
\label{intro:outline}
Let $\A$ be a (central) arrangement of $n$ hyperplanes in $\C^{\ell}$, 
defined by a polynomial $Q(\A)=\prod_{H\in \A} f_H$, where each 
$f_H$ is a linear form whose kernel is $H$.   The starting point 
of our study is the well-known observation that the Milnor fiber 
of the arrangement, $F(\A)$, is a cyclic, $n$-fold regular cover 
of the projectivized complement, $U(\A)$; this cover is 
defined by the homomorphism $\delta\colon \pi_1(U(\A))\surj \Z_n$, 
taking each meridian generator $x_H$ to $1$. 

Now, if $\k$ is an algebraically closed field whose characteristic 
does not divide $n$, then $H_q(F(\A),\k)$ decomposes as a 
direct sum, $\bigoplus_{\rho} H_q(U(\A),\k_{\rho})$, where the 
rank $1$ local systems $\k_{\rho}$ are indexed by characters 
$\rho\colon \pi_1(U(\A))\to \k^*$ that factor through $\delta$.  
Thus, if there is such a character $\rho$ for which  
$H_q(U(\A),\k_{\rho})\ne 0$, where $\k$ is a field 
of characteristic $p>0$ not dividing $n$, but there is no 
corresponding character in characteristic $0$, then the 
group $H_q(F(\A),\Z)$ will have non-trivial $p$-torsion.

To find such characters, we proceed in several steps.  First, 
we cast a wider net, and look at multi-arrangements, that is, 
pairs $(\A,m)$, with positive integer weights $m_H$ attached 
to each hyperplane $H\in \A$.  The corresponding (unreduced) Milnor 
fiber, $F(\A,m)$, is the cover defined by the homomorphism 
$\delta_m\colon \pi_1(U(\A))\surj \Z_N$, $x_H\mapsto m_H$, 
where $N$ denotes the sum of the weights. Fix a prime $p$. 
Starting with an arrangement $\A$ supporting a suitable multinet 
structure, we find a deletion $\A' =\A\setminus \{H\}$, and  
a choice of multiplicities $m'$ on $\A'$ such that $H_1(F(\A',m'),\Z)$ 
has $p$-torsion. Finally, we construct a ``polarized" arrangement, 
$\B=\A' \pc m'$, and show that $H_*(F(\B),\Z)$ has $p$-torsion.

\subsection{Characteristic varieties}
\label{intro:cv}

Let us now provide a bit more detail on the structure of the 
paper, and the ingredients that go into the proof of the 
main theorem.
We start in Sections \ref{sect:cv}  and \ref{sect:tors covers} with 
a general study of the jump loci for homology in rank $1$ local 
systems.  The geometry of these varieties, and the way it changes 
with the characteristic of the ground field affects the homology 
of finite cyclic covers. 

Let $X$ be a connected, finite-type CW-complex, with fundamental 
group $G=\pi_1(X,x_0)$.  The {\em characteristic varieties}\/ of 
$X$ are the subvarieties $\V^q_d(X,\k)$ of the character group 
$\widehat{G}=\Hom(G,\k^*)$, consisting of those characters 
$\rho$ for which $H_q(X,\k_{\rho})$ has dimension at least $d$.  
(For simplicity, we drop the subscript if $d=1$.)  

As is well-known, the characteristic varieties control to a large extent 
the homology of finite abelian covers.  More precisely, if $X^{\chi}\to X$ 
is a regular cover defined by an epimorphism $\chi\colon G\to A$, 
where $A$ is a finite abelian group, and if $\k$ is an algebraically closed 
field of characteristic $p$, where $p\nmid \abs{A}$, then 
$H_q(X^{\chi},\k)$ has dimension equal to 
$\sum_{d\geq 1}\abs{\im(\widehat{\chi}_\k) \cap  \V_d^q(X,\k)}$, 
where $\widehat{\chi}_\k\colon \widehat{A} \to \widehat{G}$ 
is the induced morphism between character groups.

\begin{theorem}
\label{thm:intro ptors1}
Let $X^{\chi}\to X$ be a regular, finite cyclic cover, defined 
by an epimorphism $\chi\colon \pi_1(X)\surj \Z_r$. 
Suppose that $\im(\widehat{\chi}_\C)\not\subseteq \V^q(X,\C)$, but
$\im(\widehat{\chi}_\k) \subseteq\V^q(X,\k)$, 
for some field $\k$ of characteristic $p$ not dividing $r$. Then
$H_q(X^{\chi},\Z)$ has non-zero $p$-torsion.  
\end{theorem}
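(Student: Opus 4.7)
The plan is to turn the hypotheses into a strict comparison of Betti numbers of $X^\chi$ and then extract torsion via universal coefficients. The starting input is the character-sum formula quoted just before the theorem:
\[
\dim_\k H_q(X^\chi,\k) \;=\; \sum_{\rho\in\im(\widehat{\chi}_\k)}\dim_\k H_q(X,\k_\rho),
\qquad \dim_\C H_q(X^\chi,\C) \;=\; \sum_{\rho\in\im(\widehat{\chi}_\C)}\dim_\C H_q(X,\C_\rho).
\]
Since $p\nmid r$, both index sets are in bijection with the $r$ characters of $\Z_r$, matched through the $r$-th roots of unity that live in both $\C^*$ and $\k^*$.

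The key technical lemma I would prove is a pointwise semicontinuity: for each matched pair,
\[
\dim_\k H_q(X,\k_\rho) \;\geq\; \dim_\C H_q(X,\C_\rho).
\]
For this I would consolidate the two specialisations into a universal local system over the Dedekind domain $R:=\Z[\zeta_r,1/r]$, so that $\chi$ lifts to $\rho_0\colon\pi_1(X)\to R^*$ and $C_*(X,R_{\rho_0})$ is a bounded complex of finitely generated free $R$-modules. Flatness of $\C$ over $R$ (any torsion-free module over a Dedekind domain is flat) identifies the right-hand side with $\rank_R M_q$, where $M_q:=H_q(X,R_{\rho_0})$, while the universal coefficient sequence for the reduction $R\to\k$ attached to $\rho$ gives
\[
\dim_\k H_q(X,\k_\rho) \;=\; \dim_\k(M_q\otimes_R\k)+\dim_\k \tor_1^R(M_{q-1},\k) \;\geq\; \rank_R M_q.
\]
Summing over all $r$ matched characters then produces $\dim_\k H_q(X^\chi,\k)\geq\dim_\C H_q(X^\chi,\C)$, with strict inequality of at least $1$ contributed by the distinguished character that lies in $\V^q(X,\k)\setminus\V^q(X,\C)$.

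Integral universal coefficients for the cover converts this gap into torsion. Writing $T_i$ for the $\Z$-torsion subgroup of $H_i(X^\chi,\Z)$, one has $\dim_\C H_q(X^\chi,\C)=\rank_\Z H_q(X^\chi,\Z)$ and
\[
\dim_\k H_q(X^\chi,\k) \;=\; \rank_\Z H_q(X^\chi,\Z)+\dim_\k(T_q\otimes\k)+\dim_\k\tor(T_{q-1},\k),
\]
so the strict inequality forces $p$-primary torsion in $T_q$ or $T_{q-1}$.

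The step I expect to be the main obstacle is confining the new torsion to degree $q$. For this I would exploit the block decomposition $\Z[1/r][\Z_r]\cong\prod_{d\mid r}\Z[1/r,\zeta_d]$, which splits the chain complex of the cover into $R_d$-blocks $C^{(d)}_*$ indexed by the order of the character. The $\C$-hypothesis singles out a value of $d$ for which $H_q(C^{(d)}_*)$ is a pure $R_d$-torsion module; the $\k$-hypothesis at a character of order $d$ must then put $\mathfrak{p}$-primary torsion into $H_q(C^{(d)}_*)$ itself for some prime $\mathfrak{p}$ of $R_d$ over $p$, rather than allowing it to leak entirely into the Tor contribution coming from $H_{q-1}(C^{(d)}_*)$. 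Ruling out that leaking alternative is the delicate step of the argument.
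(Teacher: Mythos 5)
Your proposal follows essentially the same route as the paper. The character-sum formula you start from is the paper's Theorem~\ref{thm:eko} (Shapiro's lemma plus complete reducibility of $\k[\Z_r]$), and your pointwise semicontinuity lemma is exactly the content of Theorem~\ref{thm:compare fields}: the authors prove it by comparing $\rank_{\Z}H_q(X,\Z[t]/\Phi_k(t))$ with the corresponding $\C$- and $\k$-dimensions one Galois orbit of characters at a time, which is the same computation as your universal local system over $\Z[\zeta_r,1/r]$ followed by flat base change to $\C$ and reduction modulo a prime over $p$; the two bookkeeping schemes are interchangeable.

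On the step you single out as delicate: the paper does not resolve it, so you should not expect to find the missing ingredient there. The authors deduce the conclusion from the remark in \S\ref{subsec:finite covers} that a strict inequality $\dim_{\C}H_q(\cdot,\C)<\dim_{\k}H_q(\cdot,\k)$ forces $p$-torsion in $H_q(\cdot,\Z)$; read literally, that remark is not quite right (for instance $H_2(\R P^2,\F_2)\neq 0$ while $H_2(\R P^2,\Z)=0$), since the universal coefficient theorem only places the torsion in degree $q$ or $q-1$. Your block-decomposition worry is therefore legitimate, and the ``leaking'' alternative cannot be excluded by formal means alone: $\mathfrak{p}$-torsion in $H_{q-1}(C^{(d)}_*)$ is precisely what would produce the $\tor$ contribution, and it would land in $H_{q-1}(X^{\chi},\Z)$. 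In the paper's uses of the theorem this is harmless: for $q=1$ the group $H_0(X^{\chi},\Z)$ is free, so the torsion must sit in $H_1$; and in the higher-degree polarization examples the authors check that the $\C$- and $\k$-Betti numbers of the cover agree in every degree below $q$, which (again by universal coefficients, working upward from degree $0$) pushes the torsion into degree $q$. So your write-up is as complete as the paper's; to make the statement airtight as phrased, either weaken the conclusion to ``$H_q$ or $H_{q-1}$ has $p$-torsion'' or add the hypothesis that the two Betti numbers of $X^{\chi}$ agree in degree $q-1$.
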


Furthermore, if $h\colon X^{\chi} \to X^{\chi}$ is the monodromy 
of the cover,  then the characteristic polynomial of the algebraic 
monodromy acting on $H_q(X^{\chi},\k)$ can be written down 
explicitly in terms of the intersection inside $\widehat{G}$ 
of the varieties $\V_d^q(X,\k)$ with the cyclic subgroup 
$\im(\widehat{\chi}_\k)$.

In order to apply Theorem \ref{thm:intro ptors1} in full generality, one 
needs to know both the characteristic varieties over $\C$ and 
over $\k$, and exploit the qualitative differences between the two. 
In a special situation (which will occur often in our approach), 
we only need to verify that a certain condition on the first characteristic 
variety over $\C$ holds, in which case we can construct infinitely 
such covers, starting from a suitable character.

\begin{theorem}
\label{thm:intro ptors2}
Suppose there is a character which factors as 
$\pi_1(X)\surj \Z * \Z_p \surj \Z \to \C^*$, for some prime $p$, 
but does not belong to $\V^1(X,\C)$.  
Then, for all sufficiently large integers $r$ not divisible by $p$, 
there exists a regular, $r$-fold cyclic cover $Y\to X$ such that 
$H_1(Y,\Z)$ has non-zero $p$-torsion.
\end{theorem}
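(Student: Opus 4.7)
Our plan is to apply Theorem~\ref{thm:intro ptors1} to a one-parameter family of cyclic quotients $\chi_r$ of $\pi_1(X)$ built from the hypothesized factorization. Let $\phi\colon \pi_1(X)\surj \Z*\Z_p$ and $\psi\colon \Z*\Z_p\surj \Z$ denote the two given surjections, and let $\alpha\in\C^*$ be the image of the generator $1\in\Z$ under the final map, so that the hypothesized character is $\rho_\alpha\colon g\mapsto \alpha^{\psi\phi(g)}$, and $\rho_\alpha\notin \V^1(X,\C)$. For each $r\ge 1$, define $\chi_r\colon \pi_1(X)\surj \Z_r$ as $\psi\circ\phi$ followed by reduction modulo~$r$; since $\phi$ and $\psi$ are surjective, so is $\chi_r$. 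We aim to show that for all sufficiently large $r$ coprime to $p$, both hypotheses of Theorem~\ref{thm:intro ptors1} hold for $\chi_r$, producing the desired $p$-torsion in $H_1(X^{\chi_r},\Z)$.

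The modular inclusion $\im(\widehat{\chi_r}_\k)\subseteq \V^1(X,\k)$ reduces to the key characteristic-$p$ computation that $\V^1(\Z*\Z_p,\k) = \widehat{\Z*\Z_p}(\k)$. Using the presentation $\Z*\Z_p=\langle x,y\mid y^p\rangle$: in characteristic $p$ the only $p$-th root of unity in $\k$ is $1$, so every character $\sigma$ of $\Z*\Z_p$ is trivial on the $\Z_p$ factor; moreover, the Fox-derivative entry $1+y+\cdots+y^{p-1}$ evaluates to $p=0$, so the Alexander-type boundary of the presentation 2-complex vanishes, and a direct chain-level computation gives $H_1(\Z*\Z_p,\k_\sigma)\ne 0$ for every such $\sigma$. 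The five-term exact sequence in group homology for the extension $1\to \ker\phi\to \pi_1(X)\to \Z*\Z_p\to 1$ with coefficients in $\k_{\phi^*\sigma}$ then produces a surjection $H_1(\pi_1(X),\k_{\phi^*\sigma})\surj H_1(\Z*\Z_p,\k_\sigma)$, so $\phi^*\widehat{\Z*\Z_p}(\k)\subseteq \V^1(\pi_1(X),\k)=\V^1(X,\k)$. Since every element of $\im(\widehat{\chi_r}_\k)$ factors through $\phi$, the inclusion follows for every $r$ coprime to $p$.

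For the complex condition $\im(\widehat{\chi_r}_\C)\not\subseteq \V^1(X,\C)$, consider the algebraic morphism $\C^*\to \widehat{\pi_1(X)}(\C)$ sending $\zeta\mapsto \rho_\zeta\colon g\mapsto \zeta^{\psi\phi(g)}$. The preimage $T$ of $\V^1(X,\C)$ is Zariski closed in $\C^*$, and since $\alpha\notin T$ by hypothesis, $T$ is a proper closed subvariety of $\C^*$ and hence finite. For any $r>\abs{T}$ with $p\nmid r$, the group $\mu_r(\C)$ of $r$-th roots of unity has $r>\abs{T}$ elements, so it cannot be contained in $T$; choosing $\zeta\in \mu_r(\C)\setminus T$ gives a character in $\im(\widehat{\chi_r}_\C)$ outside $\V^1(X,\C)$. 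Applying Theorem~\ref{thm:intro ptors1} to $\chi_r$ yields the desired $p$-torsion in $H_1(X^{\chi_r},\Z)$, completing the proof for all sufficiently large $r$ coprime to~$p$.

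The technical heart of the argument is the characteristic-$p$ computation $\V^1(\Z*\Z_p,\k)=\widehat{\Z*\Z_p}(\k)$: this is where the $\Z_p$ factor in the hypothesis plays its essential role, and the sharp contrast with the proper subvariety $\V^1(\Z*\Z_p,\C)$ (where characters of the form $(t,1)$ with $t\ne 1$ are excluded) is exactly the asymmetry between characteristics that Theorem~\ref{thm:intro ptors1} converts into torsion. Everything else --- surjectivity of $\chi_r$, the five-term sequence, and finiteness of $T$ --- is routine.
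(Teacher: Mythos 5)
Your proof is correct and follows essentially the same route as the paper's proof of Theorem \ref{thm:tors1}: reduce $\psi\circ\varphi$ modulo $r$, use finiteness of the intersection of the one-parameter subgroup $\im(\widehat{\psi}_\C)$ with $\V^1(X,\C)$ to get the complex condition for large $r$, use $\V^1(\Z*\Z_p,\k)=\widehat{\Z*\Z_p}_\k$ in characteristic $p$ together with functoriality of $\V^1$ under epimorphisms to get the modular condition, and conclude via Theorem \ref{thm:intro ptors1}. The only difference is cosmetic: you re-derive the paper's Example \ref{ex:V of Fn star Zp} by Fox calculus and Lemma \ref{lem:epi cv} by the five-term exact sequence rather than citing them.
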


In the above, $\Z* \Z_p=\langle a, b \mid b^p=1\rangle$  is 
the free product of the two factors, while $\Z * \Z_p \surj \Z$  
is the epimorphism given on generators by $a\mapsto 1$ 
and $b\mapsto 0$.

\subsection{Quasi-projective varieties and orbifold fibrations}
\label{intro:qp}
 
Now suppose $X$ is a smooth, quasi-projective variety. 
Although almost nothing is known about the characteristic varieties 
of $X$ in positive characteristic, the varieties $\V^q(X,\C)$ 
are fairly well understood, at least qualitatively. For instance, 
work of Arapura \cite{Ar}, as further refined in \cite{Di07, ACM13} 
shows that $\V^1(X,\C)$ is a union of torsion-translated 
algebraic subtori inside $\Hom(\pi_1(X),\C^*)$.

More precisely, each positive-dimensional component of 
$\V^1(X,\C)$ is obtained by pullback along an orbifold fibration 
$f\colon X \to (\Sigma,\mu)$, where $\Sigma$ is a 
(possibly punctured) Riemann surface with integer 
weights $\mu_i\ge 2$ attached to a set of marked points 
of finite size $\abs{\mu}$.  We say that the orbifold 
$(\Sigma,\mu)$ is {\em small}\/  if 
either $\Sigma=S^1\times S^1$ and $\abs{\mu} \geq2$, 
or $\Sigma=\C^*$ and $\abs{\mu} \geq1$. 
Using Theorem \ref{thm:intro ptors2}, we prove 
in Section \ref{sect:orbi} the following result.

\begin{theorem}
\label{thm:intro pencil}
Let $X$ be a smooth, quasi-projective variety.  Suppose there 
is a small orbifold fibration $f\colon X\to (\Sigma,\mu)$ 
and a prime $p$ dividing each $\mu_i$.  
Then, for all sufficiently large integers $r$ not divisible by $p$, 
there exists a regular, $r$-fold cyclic cover $Y\to X$ 
such that $H_1(Y,\Z)$ has non-zero $p$-torsion.  
\end{theorem}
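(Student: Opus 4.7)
The plan is to apply Theorem~\ref{thm:intro ptors2}, so I need to exhibit a character factoring as $\pi_1(X)\surj\Z*\Z_p\surj\Z\to\C^*$ that lies outside $\V^1(X,\C)$. Since the orbifold fibration $f$ already supplies a surjection $f_*\colon\pi_1(X)\surj\pi_1^{\mathrm{orb}}(\Sigma,\mu)$, the task reduces to (i) building a surjection $\pi_1^{\mathrm{orb}}(\Sigma,\mu)\surj\Z*\Z_p$, and (ii) checking that the resulting one-parameter family of characters escapes $\V^1(X,\C)$.

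For (i), I would use the standard presentations of the two small orbifolds. When $\Sigma=\C^*$, one has $\pi_1^{\mathrm{orb}}(\C^*,\mu)\cong\Z*\Z_{\mu_1}*\cdots*\Z_{\mu_k}$; projecting onto the first two free factors and then applying the quotient $\Z_{\mu_1}\surj\Z_p$ (which exists since $p\mid\mu_1$) produces the required map. When $\Sigma=S^1\times S^1$, use $\pi_1^{\mathrm{orb}}(S^1\times S^1,\mu)=\langle a_1,a_2,c_1,\ldots,c_k\mid [a_1,a_2]=c_1\cdots c_k,\ c_i^{\mu_i}=1\rangle$ and send $a_1\mapsto\alpha$, $a_2\mapsto 1$, $c_1\mapsto\beta$, $c_2\mapsto\beta^{-1}$, and $c_i\mapsto 1$ for $i\ge 3$, where $\Z*\Z_p=\langle\alpha,\beta\mid\beta^p\rangle$. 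The hypotheses $|\mu|\ge 2$ and $p\mid\mu_1,\mu_2$ guarantee the relations are respected. Composing with $f_*$ yields $\phi\colon\pi_1(X)\surj\Z*\Z_p$.

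For (ii), let $\chi_t$ be the composition $\pi_1(X)\xrightarrow{\phi}\Z*\Z_p\surj\Z\xrightarrow{1\mapsto t}\C^*$, so that $\chi_t(\alpha)=t$ and $\chi_t(\beta)=1$. Each $\chi_t$ is the $f_*$-pullback of an orbifold character $\psi_t$ on $\pi_1^{\mathrm{orb}}(\Sigma,\mu)$ that sends every torsion generator $c_i$ to $1$. A direct Fox-calculus computation on the presentations above yields $H^1(\pi_1^{\mathrm{orb}}(\Sigma,\mu),\C_{\psi_t})=0$ for $t\ne 1$; the crucial input is that the Alexander-matrix entry $1+\psi_t(c_i)+\psi_t(c_i)^2+\cdots+\psi_t(c_i)^{\mu_i-1}=\mu_i$ is a nonzero scalar in $\C$, which collapses the kernel of the first differential onto the image of the differential from $C^0$. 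Hence $\psi_t\notin\V^1(\Sigma,\mu;\C)$. By the Arapura-type structure theorem and its refinements in \cite{Ar,Di07,ACM13}, positive-dimensional components of $\V^1(X,\C)$ arise as pullbacks of orbifold characteristic varieties, so the component of $\V^1(X,\C)$ coming from $f$ does not contain the one-parameter subtorus $\{\chi_t:t\in\C^*\}$ except at the identity. Combining this with a uniqueness statement for orbifold factorizations to rule out any other orbifold fibration whose pullback component could accommodate $\{\chi_t\}$, we conclude that $\chi_t\notin\V^1(X,\C)$ for all but finitely many $t$, and Theorem~\ref{thm:intro ptors2} then produces the desired $r$-fold cyclic covers with $p$-torsion in $H_1$.

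The principal obstacle is the rigidity step at the end of (ii). The naive Lyndon--Hochschild--Serre analysis for the extension $1\to\ker\phi\to\pi_1(X)\to\Z*\Z_p\to 1$ is inconclusive, because the potentially nonzero term $H^1(\ker\phi,\C)^{\Z*\Z_p}$ is independent of $t$ and must be controlled separately. The clean way forward is structural: invoke Stein-type uniqueness for admissible orbifold factorizations onto hyperbolic targets, so that any competing $g\colon X\to(\Sigma',\mu')$ whose pullback component would absorb the subtorus $\{\chi_t\}$ must factor through $f$. Once this reduction is in place, the Fox-calculus computation on $(\Sigma,\mu)$ closes the argument.
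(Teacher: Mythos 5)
Your overall strategy coincides with the paper's: push $\pi_1(X)$ onto $\pi_1^{\orb}(\Sigma,\mu)$, then onto $\Z*\Z_p$, and feed the resulting rank-one family of characters into Theorem~\ref{thm:intro ptors2}. Your explicit presentations of the two small orbifold groups and the surjections onto $\Z*\Z_p$ are correct (the paper leaves this implicit), and your Fox-calculus claim that $\psi_t\notin\V^1(\pi_1^{\orb}(\Sigma,\mu),\C)$ for $t\neq 1$ is exactly the computation recorded in \eqref{eq:v1piorb}: for a small hyperbolic orbifold, $\V^1(\Gamma,\C)$ misses the identity component $\widehat{\Gamma}^{\circ}$ away from $\bo$.

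However, the step you yourself flag as ``the principal obstacle'' is a genuine gap as written, and it is exactly where the paper's argument does real work. Knowing that the component of $\V^1(X,\C)$ \emph{attached to $f$} avoids the subtorus $T=\im(\widehat{\psi})$ says nothing about the other components, and your proposed fix --- ``any competing $g$ whose pullback component would absorb $T$ must factor through $f$'' --- is neither proved nor quite the right statement (the issue is a relation between direction tori, not a factorization of maps, and the LHS-spectral-sequence remark does not substitute for it). The missing ingredient is Theorem~\ref{thm:dirtor} (from \cite{DPS08,ACM13}): the direction tori of two orbifold fibrations either coincide or meet in finitely many points. With that in hand the paper argues: $T$ sits inside the direction torus of $f$; every positive-dimensional component of $\V^1(X,\C)$ has the form $\rho T'$ for some direction torus $T'$ (Theorem~\ref{thm:arapura bis}); if $T'$ is not the direction torus of $f$ then $T\cap\rho T'$ is finite, while if it is, the fibration is equivalent to the small fibration $f$ and so $\rho\notin T'$, forcing $\rho T'\cap T=\emptyset$; the remaining components form a finite set $Z$ of torsion points. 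Hence $T\cap\V^1(X,\C)$ is finite and Theorem~\ref{thm:tors1} applies. So: right architecture, but you must either cite or prove the direction-torus rigidity statement; as it stands your argument is incomplete precisely at the point on which the conclusion depends.
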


For instance, if $M$ is the Catanese--Ciliberto--Mendes Lopes 
surface from \cite{CCM}, then $M$ admits an elliptic pencil 
with two multiple fibers, each of multiplicity $2$.  Thus, for each odd integer 
$r>1$, the surface $M$ admits a cyclic $r$-fold cover with $2$-torsion 
in first homology.

\subsection{Multinets}
\label{intro:multinets}

In Sections \ref{sect:arr} and  \ref{sect:cyclic arr}, we consider 
in detail the case when $X=M(\A)$ is the complement of a 
hyperplane arrangement.  As shown by Falk and Yuzvinsky  
in \cite{FY}, large orbifold fibrations supported by $M(\A)$ 
correspond to {\em multinets}\/ on $L(\A)$.  Each such multinet  
consists of a partition of $\A$ into at least $3$ subsets 
$\A_1,\ldots,\A_k$, together with an assignment of 
multiplicities, $m\colon \A\to \N$, and a set of rank 
$2$ flats, called the base locus, such that any two 
hyperplanes from different parts of the partition intersect 
in the base locus, and several technical conditions are satisfied:   
for instance, the sum of the multiplicities over each part $\A_i$  
is constant, and for each flat $Z$ in the base locus, the sum 
$n_{Z}:=\sum_{H\in\A_i\colon H\leq Z} m_H$ 
is independent of $i$.

For our purposes, it will be convenient to isolate a special 
class of multinets:  a {\em pointed multinet}\/  on $\A$ 
is a multinet structure, together with a distinguished hyperplane 
$H$ for which $m_{H}>1$, and $m_{H} \mid n_Z$ for each 
flat $Z\ge H$ in the base locus.  Given such a pointed multinet, 
we show that the complement of the deletion $\A':=\A\setminus \{H\}$ 
supports a small pencil. Consequently, for any prime $p$ dividing 
$m_{H}$, and any sufficiently large integer $r$ not divisible by 
$p$, there exists a regular, $r$-fold cyclic cover $Y\to U(\A')$ 
such that $H_1(Y,\Z)$ has $p$-torsion.

On the other hand, we also show that any finite cyclic cover 
of an arrangement complement is dominated by a Milnor 
fiber corresponding to a suitable choice of multiplicities. 
Putting things together, we obtain the following result.

\begin{theorem}
\label{thm:intro multi tors}
Suppose $\A$ admits a pointed multinet, with distinguished 
hyperplane $H$ and multiplicity vector $m$.  Let $p$ be a prime 
dividing $m_H$. There is then a choice of multiplicity vector $m'$ 
on the deletion $\A' =\A\setminus \{H\}$ such that 
$H_1(F(\A',m'),\Z)$ has non-zero $p$-torsion.
\end{theorem}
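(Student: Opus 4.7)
\emph{Proof plan.} The argument chains together the two halves sketched in Section~\ref{intro:outline}. The first step is to convert the pointed multinet on $\A$ into a small orbifold pencil on the deletion $U(\A')$, with every multi-fiber weight divisible by $p$. The multinet supplies a pencil $U(\A) \to (\Sigma,\mu)$ whose base-point weights are the integers $n_Z$; extending this pencil across $H$, so that its domain becomes the larger complement $U(\A')$, alters the multiplicity structure at exactly one base point. The divisibility hypothesis $m_H \mid n_Z$ for $Z \ge H$ guarantees that the new weights remain positive integers divisible by $m_H$, hence by the prime $p \mid m_H$, and the resulting orbifold $(\Sigma',\mu')$ falls into the small regime of Section~\ref{sect:orbi}.

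With such a small pencil in hand, Theorem~\ref{thm:intro pencil} applied to $U(\A')$ produces, for every sufficiently large integer $r$ coprime to $p$, a regular cyclic $r$-fold cover $Y \to U(\A')$ with non-trivial $p$-torsion in $H_1(Y,\Z)$. The defining character $\chi\colon \pi_1(U(\A')) \surj \Z_r$ factors through the orbifold fundamental group $\pi_1^{\orb}(\Sigma',\mu')$.

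I would next realize $\chi$ as a mod-$r$ reduction of a Milnor-fiber character. Choose positive integer multiplicities $m'_K$ on $\A'$ with $m'_K \equiv \chi(x_K) \pmod{r}$, adjusted so that $N := \sum_K m'_K$ is coprime to $p$ (possible since the congruence class leaves infinitely many choices). Then $\chi = q \circ \delta_{m'}$ with $q\colon \Z_N \surj \Z_r$ the canonical quotient, so $F(\A',m')$ covers $Y$, and moreover $\delta_{m'}$ itself factors through $\pi_1^{\orb}(\Sigma',\mu')$.

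The final step, and the main obstacle, is to promote $p$-torsion from $H_1(Y,\Z)$ to $H_1(F(\A',m'),\Z)$, since $F(\A',m')$ is a further cover of $Y$ rather than $Y$ itself. For this I would apply Theorem~\ref{thm:intro ptors1} directly to $\delta_{m'}$, using the dimension formula
\[
\dim_{\k} H_1(F(\A',m'),\k) = \sum_{d \ge 1} \bigl|\im(\widehat{\delta_{m'}}_{\k}) \cap \V_d^1(U(\A'),\k)\bigr|.
\]
Because $\delta_{m'}$ factors through $\pi_1^{\orb}(\Sigma',\mu')$, its cyclic image lies in the pullback of the orbifold character variety. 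The divisibility of the multi-fiber weights by $p$ makes this pullback contribute extra torsion-translated components to $\V^1(U(\A'),\k)$ that are absent from $\V^1(U(\A'),\C)$; exploiting the freedom in the residues of $m'$ modulo $r$, together with the Arapura description of $\V^1(U(\A'),\C)$ as a finite union of torsion-translated subtori, one arranges for the cyclic image in characteristic $p$ to meet these extra components while its complex counterpart misses at least one corresponding $\C$-component. The resulting asymmetry in intersection counts yields the required $p$-torsion via Theorem~\ref{thm:intro ptors1}.
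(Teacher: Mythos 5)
Your first two steps reproduce the paper's argument: the pointed multinet yields a small pencil on $U(\A')$ (Proposition~\ref{prop:del}, Theorem~\ref{thm:del multinet}), producing an $r$-fold cyclic cover $Y\to U(\A')$ with $p$-torsion in $H_1$, and Proposition~\ref{prop:Milnor dominates} then supplies multiplicities $m'$ with $p\nmid N'$ so that $F(\A',m')\to U(\A')$ factors through $Y$. The divergence, and the problem, is in your final step. First, your claim that ``$\delta_{m'}$ itself factors through $\pi_1^{\orb}(\Sigma',\mu')$'' is unjustified and false in general: only the mod-$r$ reduction $\chi$ factors through the orbifold group (indeed through $\Z*\Z_p\surj\Z\surj\Z_r$); the lift $\delta_{m'}\colon\pi_1(U(\A'))\surj\Z_{N'}$ is only constrained modulo $r$ and has no reason to kill $\ker(f_\sharp)$. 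Second, even granting some factorization, Theorem~\ref{thm:intro ptors1} applied to $\delta_{m'}$ requires the \emph{entire} order-$N'$ cyclic group $\im(\widehat{\delta_{m'}}_{\k})$ to lie in $\V^1(U(\A'),\k)$, a condition your construction does not arrange and which will typically fail. Your phrase ``one arranges for the cyclic image in characteristic $p$ to meet these extra components while its complex counterpart misses at least one'' is precisely the content that needs a proof, and no mechanism is given.

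The paper closes the gap with one line you did not use: the transfer argument. Since $F(\A',m')\to Y$ is a finite cover of degree $N'/r$, and $p\nmid N'$, Lemma~\ref{lem:transfer} shows that $H_1(F(\A',m'),\Z)_{(p)}\to H_1(Y,\Z)_{(p)}$ is surjective, so the $p$-torsion already found in $H_1(Y,\Z)$ forces $p$-torsion in $H_1(F(\A',m'),\Z)$. (Alternatively, your route could be repaired via Theorem~\ref{thm:compare fields} rather than Theorem~\ref{thm:intro ptors1}: the elements of order dividing $r$ in $\im(\widehat{\delta_{m'}})$ are exactly $\im(\widehat{\chi})$, which lies in $\V^1(U(\A'),\k)$ but not entirely in $\V^1(U(\A'),\C)$ by the proof of Theorem~\ref{thm:tors1}, giving a strict inequality in \eqref{eq:uct} and hence torsion --- but this is essentially the transfer argument in disguise, and you would still need to drop the appeal to Theorem~\ref{thm:intro ptors1} and the factorization claim.)
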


For instance, if $\A$ is the reflection arrangement of type 
$\operatorname{B}_3$, defined by the polynomial 
$Q=xyz(x^2-y^2)(x^2-z^2)(y^2-z^2)$, then $\A$ 
satisfies the conditions of the theorem, for 
$m=(2,2,2,1,1,1,1,1,1)$ and $H= \{z=0\}$.  
Choosing then multiplicities $m'=(2,1,3,3,2,2,1,1)$ 
on $\A'$ shows that $H_1(F(\A',m'),\Z)$ has non-zero 
$2$-torsion.  

More generally, the reflection arrangement of 
the full monomial complex reflection group, $\A(p,1,3)$,
admits a pointed multinet, for each prime $p$. 
Thus, we can detect $p$-torsion in the first homology 
of the Milnor fiber of a suitable multiarrangement on 
the deletion.   These results recover computations from \cite{CDS03}
and recast them in a more general framework.

\subsection{Parallel connections and polarizations}
\label{intro:parallel}

In Sections \ref{sect:operad} and \ref{sect:polar}, we develop 
the theory of parallel connections of matroids and arrangements
in a more general, operadic setting.  We note that the parallel
connection construction can be interpreted as a pseudo-operad
composition map (in the sense of \cite{MSS02}): Figure
\ref{fig:pc_example} may give the reader some intuition.  As shown by
Falk and Proudfoot \cite{FP}, the complement of a parallel
connection of two arrangements is diffeomorphic to a product of the
respective complements.  However, the diffeomorphism induces a 
nontrivial map on the abelianized fundamental groups, hence on
the parameter spaces for the homology jump loci.  The maps 
themselves also form a symmetric operad, and we use this 
observation as an organizational device in examining the 
effect of iterated parallel connections.

In particular, given an arrangement $\A=\set{H_1,\dots ,H_n}$ 
with multiplicity vector $m$, we define the {\em polarization}\/ 
$\A \pc m$ to be the iterated parallel connection of the form
$\A\plugin_{H_1}\Pl_{m_{H_1}}\plugin_{H_2} 
\cdots\plugin_{H_n}\Pl_{m_{H_n}}$, where $\Pl_k$ 
denotes a pencil of $k$ lines in $\C^2$, and $\plugin_{H}$
denotes a parallel connection that attaches a hyperplane of 
$\Pl_k$ to the hyperplane $H$ of $\A$.  Figure~\ref{fig:pc} 
shows an example.  The construction allows us to construct 
new arrangements with slightly customized characteristic varieties.

A crucial point here is the connection between the 
Milnor fiber of the (simple) arrangement $\A\pc m$ 
and the Milnor fiber of the multi-arrangement $(\A,m)$:  
the pullback of the cover $F(\A\pc m)\to U(\A\pc m)$ 
along the canonical inclusion $U(\A) \to U(\A\pc m) $ 
is equivalent to the cover $F(\A,m)\to U(\A)$.  Unlike $F(\A,m)$, 
the manifold $F(\A\pc m)$ is the Milnor fiber of a {\em reduced}\/ 
hypersurface.  Using this fact, we prove the following.

\begin{theorem}
\label{thm:intro polar tors}
Suppose $\A$ admits a pointed multinet, with distinguished 
hyperplane $H$ and multiplicity $m$.  Let $p$ be a prime 
dividing $m_H$. 
There is then a choice of multiplicities $m'$ on the deletion 
$\A' =\A\setminus \{H\}$ such that the Milnor fiber of the 
polarization $\B=\A' \pc m'$ has $p$-torsion in homology, 
in degree $1+\abs{\set{K\in \A':  m'_K\ge 3}}$.
\end{theorem}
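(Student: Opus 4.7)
The plan is to combine Theorem~\ref{thm:intro multi tors} with the product decomposition of parallel-connection complements due to Falk and Proudfoot, and a K\"unneth-style calculation of characteristic varieties. By Theorem~\ref{thm:intro multi tors}, I fix multiplicities $m'$ on $\A'$ such that $H_1(F(\A',m'),\Z)$ has $p$-torsion. As in that theorem's proof via Theorem~\ref{thm:intro ptors1}, the torsion is detected by a character $\rho'\in\im(\widehat{\delta_{m'}}_\k)$ (for $N=\sum_K m'_K$ with $p\nmid N$) lying in $\V^1(U(\A'),\k)\setminus\V^1(U(\A'),\C)$ for $\k$ of characteristic $p$; concretely $\rho'(x_K)=\omega^{m'_K}$ for some $N$-th root of unity $\omega\in\k^*$.

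Next, iterating the Falk-Proudfoot diffeomorphism over the pencils in the polarization yields a product decomposition $M(\B)\cong M(\A')\times Y$, with $Y=\prod_{K\in\A'}W_{m'_K}$ and $W_k$ the complement of $k-1$ points in $\C$. Under this decomposition, the inclusion $U(\A')\inj U(\B)$ becomes a slice inclusion, the meridians of $\B$ partition into those of $\A'$ (on the first factor) and those of the ``new'' lines from each pencil (on the corresponding $W_{m'_K}$), and the cover character $\delta\co\pi_1(U(\B))\surj\Z_N$ for $F(\B)$, sending every meridian to $1$, restricts along the slice to $\delta_{m'}$---this is the pullback property recorded just before the theorem. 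Lift $\omega$ to $\rho_\omega\in\im(\widehat\delta_\k)$ accordingly; then $\rho_\omega|_{M(\A')}=\rho'$ and $\rho_\omega|_{W_{m'_K}}$ sends each generator of $\pi_1(W_{m'_K})=F_{m'_K-1}$ to $\omega$.

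For each $K$ with $m'_K\ge 3$ and $\omega\ne 1$, the restriction $\rho_\omega|_{W_{m'_K}}$ is nontrivial, and since $H_1$ of a wedge of $\ge 2$ circles with any nontrivial local system is nonzero, it lies in $\V^1(W_{m'_K},\k)$. Setting $r=|\{K\in\A'\co m'_K\ge 3\}|$, the K\"unneth formula gives
\[
H_1(M(\A'),\k_{\rho'})\otimes\bigotimes_{K\co m'_K\ge 3}H_1(W_{m'_K},\k_\omega)\;\subseteq\; H_{1+r}(M(\B),\k_{\rho_\omega}),
\]
so $\rho_\omega\in\V^{1+r}(M(\B),\k)$. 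The analogous tensor over $\C$ vanishes because $\rho'\notin\V^1(M(\A'),\C)$, and a check of the remaining K\"unneth pairs $(i,j)$ with $i+j=1+r$ and $i\ne 1$ rules out other $\C$-contributions. Translating this discrepancy from $M(\B)$ to $U(\B)$ in the standard way for arrangements, and plugging into the cyclic-cover rank formula from Section~\ref{sect:tors covers} for $F(\B)\to U(\B)$, then produces $p$-torsion in $H_{1+r}(F(\B),\Z)$.

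The principal technical hurdle is pinning the torsion to degree exactly $1+r$: by the universal coefficient theorem, a rank discrepancy in $H_q$ places $p$-torsion in either $H_q$ or $H_{q-1}$, so the degree assertion requires a full K\"unneth analysis summed over all $\omega\in\mu_N(\k)$ and all degrees. In particular, pencil factors with $m'_K=2$ acquire vanishing twisted homology under nontrivial characters, effectively pruning the summation; one must verify that this pruning, together with potential cancellations among other roots of unity, preserves the rank inequality in degree $1+r$ and does not create spurious inequalities elsewhere.
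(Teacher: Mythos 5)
Your overall strategy coincides with the paper's: produce $m'$ via Theorem~\ref{thm:intro multi tors}, use the Falk--Proudfoot product decomposition of the polarization complement together with the compatibility of the two Milnor-fiber covers, and shift the degree-$1$ discrepancy between $\V^1(U(\A'),\k)$ and $\V^1(U(\A'),\C)$ up to degree $1+r$ by a K\"unneth computation. There is, however, one genuine gap: your treatment of the hyperplanes with $m'_K=2$. You describe these factors as ``effectively pruning the summation,'' but they do not prune it --- they annihilate it. If $m'_K=2$ then $W_{m'_K}\cong\C^*$, and for nontrivial $\omega$ one has $H_0(\C^*,\k_\omega)=H_1(\C^*,\k_\omega)=0$. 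Every K\"unneth summand of $H_{1+r}(M(\B),\k_{\rho_\omega})$ contains a tensor factor from each $W_{m'_K}$ (in degree $0$ for those with $m'_K\le 2$), so a single hyperplane with $m'_K=2$ forces $H_*(Y,\k_{\rho_\omega})=0$ for every nontrivial $\omega$ in the image; the entire discrepancy you are trying to exhibit then vanishes and the method detects nothing. In particular your displayed containment
$H_1(M(\A'),\k_{\rho'})\otimes\bigotimes_{K\colon m'_K\ge3}H_1(W_{m'_K},\k_\omega)\subseteq H_{1+r}(M(\B),\k_{\rho_\omega})$
is false in that case, and no amount of careful bookkeeping of the pruned sum will rescue it.

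The correct fix is to change $m'$, not to re-examine the sum: the set of admissible multiplicity vectors produced by Proposition~\ref{prop:Milnor dominates} is closed under translation by $(pr\N)^n$, so one may always choose $m'$ with $m'_K\neq 2$ for every $K$; this is exactly Remark~\ref{rem:avoid2}, which the paper invokes at this point. With that adjustment your argument goes through, and the degree question you flag at the end also resolves cleanly rather than requiring a ``full K\"unneth analysis'': for every nontrivial $\rho$ in the image, all factor restrictions are nontrivial, so the twisted Poincar\'e polynomial of $Y$ is concentrated in degree $r=n_3(\A',m')$ with coefficient $\prod_{m'_K\ge3}(m'_K-2)$ independent of the field, while $H_0(U(\A'),\K_{\rho_1})=0$ over both $\K=\C$ and $\K=\k$. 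Hence the coefficient of $x^{r}$ in $\Delta^\k_{\B,\delta_{\B}}-\Delta^\C_{\B,\delta_{\B}}$ vanishes while the coefficient of $x^{1+r}$ is strictly positive, and the universal coefficient theorem then places the $p$-torsion in degree $1+r$ rather than in degree $r$.
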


The sufficient condition that the arrangement $\A$ must satisfy 
in order to yield a new arrangement $\B$ with torsion 
in $H_*(F(\B),\Z)$ is completely combinatorial:  if $L(\A)$ 
admits a certain kind of multinet, then everything else is 
predicted by that.

For instance, if $\A'$ is the deleted $\operatorname{B}_3$ 
arrangement as above, then the choice of multiplicities 
$m'=(8,1,3,3,5,5,1,1)$ produces an arrangement 
$\B=\A'\pc m'$ of $27$ hyperplanes in $\C^8$, such that 
$H_6(F(\B),\Z)$ has $2$-torsion of rank $108$. 
This is the smallest example known to us of an arrangement 
with torsion in the homology of its Milnor fiber.  We refer 
to \S\ref{subsec:conclude} for several questions that naturally 
arise from this work.

\section{Characteristic varieties and finite abelian covers} 
\label{sect:cv}

We start by reviewing the way in which the characteristic 
varieties of a space determine the homology of its regular, 
finite abelian covers, with coefficients in a field of 
characteristic not dividing the order of the cover.

\subsection{Characteristic varieties}
\label{subsec:char var}
Let $X$ be a connected, finite-type CW-complex, with fundamental 
group $G=\pi_1(X,x_0)$.  Throughout this paper, $\k$ will
denote an algebraically closed field.  Let $\G=\G_\k=\Hom(G,\k^*)$ 
be the affine algebraic group of $\k$-valued (multiplicative) characters 
on $G$.  Each character $\rho\colon G\to \k^*$ determines a rank-$1$ 
local system $\k_{\rho}$ on $X$: the $\k[G]$-module structure on $\k$ 
is given by $g\cdot a=\rho(g) a$. 

The {\em characteristic varieties}\/ of $X$ with coefficients in 
$\k$ are the jumping loci for homology with coefficients 
in rank-$1$ local systems on $X$:
\begin{equation}
\label{eq:cvs}
\V^q_d(X,\k)=\{\rho\in\G : \dim_{\k} H_q(X,\k_\rho)\ge d\}.
\end{equation}
It is readily seen that these loci are Zariski closed subsets
of $\G$, and depend only on the homotopy type of $X$. 
Moreover, the characteristic varieties are defined by 
determinantal conditions on integer equations;  
therefore $\V^q_d(X,\K)$ may be regarded as the 
$\K$-points of an affine scheme $\V^q_d(X)$ defined 
over $\Z$, for any extension $\K$ of the prime subfield.  

In each fixed degree $q\ge 0$, we have a descending filtration, 
$\widehat{G}=\V^q_0(X,\k)\supseteq \V^q_1(X,\k)\supseteq\cdots
\supseteq\V^q_d(X,\k)\supseteq\cdots$. 
Let $\V^q(X,\k)=\V^q_1(X,\k)$.
Note that $\V^0(X,\k)=\set{\bo}$, where $\bo$ is the identity of $\G$.  

As shown in \cite{PS10}, characteristic varieties behave well 
under products.  More precisely, let $X_1$ and $X_2$ be two 
connected, finite-type CW-complexes, and identify the character 
group $\widehat{\pi_1(X_1\times X_2)}$ with 
the product $\widehat{\pi_1(X_1)}\times \widehat{\pi_1(X_2)}$.
Then the (depth $1$) characteristic varieties of the 
product $X_1\times X_2$ are given by 
\begin{equation}
\label{eq:cvprod}
\V^q(X_1\times X_2,\k)=\bigcup_{i=0}^{q} 
\V^i(X_1,\k)\times \V^{q-i}(X_2,\k),
\end{equation}
for all $q\geq0$.

Given a finitely presented group $G$, we may define 
the characteristic varieties $\V^1_d(G,\k)$ as those 
of a (finite) presentation $2$-complex for $G$; the definition 
does not depend on the choice of presentation.  Moreover, 
the varieties $\V^1_d(G,\k)$ can be computed directly from 
such a presentation, by means of the Fox calculus.

\begin{example}
\label{ex:V of Fn}
The characteristic varieties of a free group can be computed 
by hand.  Let $F_n$ be the free group of rank $n$. 
Identifying $\widehat{F_n}=(\k^*)^n$, we have 
\begin{equation}
\label{eq:V of Fn}
\V^1_d(F_n,\k)=\begin{cases}
\widehat{F_n} &\text{for $n\geq2$ and $d\leq n-1$},\\
\set{\bo} & \text{for $d=n$},
\end{cases}
\end{equation}
and $V^q_d(F_n,\k)=\emptyset$, otherwise. 
\end{example}

\begin{example}
\label{ex:V of Fn star Zp}
Slightly more generally, consider a free product
$\Gamma=F_n * \Z_p$, where $\Z_p$ denotes the
cyclic group of order $p$.  In this case, $\widehat{\Gamma}=
\widehat{F_n}\times\widehat{\Z_p}$: note that $\widehat{\Z_p}$ has order
$p$ if $\ch(\k)\neq p$, and equals $\set{\bo}$ if $\ch(\k)=p$.

Let $H^\circ$ denote the identity component of an algebraic 
group $H$.  A direct computation shows that if $\ch(\k)\neq p$, then
\begin{equation}
\label{eq:v1freeprod}
\V^1(\Gamma,\k)=\begin{cases}
\widehat{\Gamma} & \text{if $n\ge 2$}, \\[2pt]
\big( \widehat{\Gamma}\setminus 
\widehat{\Gamma}^{\circ}\big) \cup  \set{\bo} 
& \text{if $n=1$ }, \\[2pt]
\end{cases}
\end{equation}
and $\V^q(\Gamma,\k)=\emptyset$ for $q\geq2$.  However, if 
$\ch(\k)=p$, then $\V^q(\Gamma,\k)=\widehat{\Gamma}$ for 
all $n\geq1$, and all $q\geq1$.
\end{example} 

The degree-$1$ characteristic varieties enjoy a nice 
functoriality property. 

\begin{lemma}[\cite{Su13a}]
\label{lem:epi cv}
If $\varphi\colon G \surj Q$ is an epimorphism, then, for each 
$d\ge 1$, the induced monomorphism between character groups, 
$\widehat{\varphi}_\k=\Hom(\varphi,\k^*)\colon \widehat{Q}_\k \inj \G_\k$, 
restricts to an embedding $\V^1_d(Q,\k) \inj \V^1_d(G,\k)$.
\end{lemma}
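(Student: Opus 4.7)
The plan is to verify two facts: first, that $\widehat{\varphi}_\k$ is injective as a morphism of character groups; and second, that for each $\rho \in \widehat{Q}_\k$, we have the pointwise inequality $\dim_\k H_1(G, \k_{\rho\circ\varphi}) \ge \dim_\k H_1(Q, \k_\rho)$. Together, these imply that $\widehat{\varphi}_\k$ restricts to a (closed) embedding $\V^1_d(Q,\k) \inj \V^1_d(G,\k)$ for every $d \ge 1$. Injectivity of $\widehat{\varphi}_\k = \Hom(\varphi,\k^*)$ is immediate from the surjectivity of $\varphi$: two characters of $Q$ that agree after precomposition with a surjection must coincide.

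For the dimension inequality, I would apply the Lyndon--Hochschild--Serre spectral sequence to the extension $1\to K \to G \to Q \to 1$, where $K := \ker\varphi$. Since $\rho\circ\varphi$ is trivial on $K$, the $G$-module underlying $\k_{\rho\circ\varphi}$ restricts to the trivial $K$-module $\k$; in particular, the coinvariants $H_0(K, \k_{\rho\circ\varphi}) = \k$ carry the $Q$-action induced by $\rho$, and are isomorphic to $\k_\rho$ as a $Q$-module. The five-term exact sequence of low-degree edges then reads
\[
H_2(G,\k_{\rho\circ\varphi}) \to H_2(Q,\k_\rho) \to H_1(K,\k)_Q \to H_1(G,\k_{\rho\circ\varphi}) \to H_1(Q,\k_\rho) \to 0,
\]
and the surjection on the right yields the desired inequality between dimensions.

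The only real subtlety---and thus the main (minor) obstacle---is verifying that the identification $H_0(K,\k_{\rho\circ\varphi}) \cong \k_\rho$ respects the $Q$-module structure; this reduces to checking that the residual $Q$-action on the coinvariants is through $\rho$, which is immediate from the formula $(\rho\circ\varphi)(g) = \rho(\varphi(g))$ for any lift of an element of $Q$ to $G$. Granted this, the combination of pointwise inequality and global injectivity produces the desired embedding of $\V^1_d(Q,\k)$ as a subvariety of $\V^1_d(G,\k)$ for every depth $d\ge 1$.
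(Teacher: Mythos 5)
Your proof is correct, and it is essentially the standard argument: the paper itself gives no proof (the lemma is quoted from \cite{Su13a}), and the argument there likewise rests on the surjectivity of $H_1(G,\k_{\rho\circ\varphi})\to H_1(Q,\k_{\rho})$ coming from the five-term exact sequence of the extension $1\to\ker\varphi\to G\to Q\to 1$, combined with the injectivity of $\widehat{\varphi}_\k$. Your identification of the coinvariants $H_0(\ker\varphi,\k_{\rho\circ\varphi})$ with $\k_\rho$ as a $Q$-module is the right (and only) point needing care, and you handle it correctly.
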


\subsection{Finite covers}
\label{subsec:finite covers}
As before, let $X$ be a connected, finite-type CW-complex.  
By the Universal Coefficients Theorem, 
\begin{equation}
\label{eq:uct bis}
\dim_{\C} H_i(X ,\C) \le \dim_{\k} H_i(X ,\k),
\end{equation}
for any field $\k$.  Moreover, if $\ch(\k)=p$ and the inequality 
is strict, then $H_i(X,\Z)$ has non-trivial $p$-torsion.

A well-known transfer argument shows that the homology groups 
of a finite cover of $X$ are ``larger" than the homology groups 
of $X$, at least away from any prime $p$ not dividing 
the order of the cover. The following lemma makes this fact 
more precise, in a way suited for our purposes.  

If $A$ is an abelian group, let $A_{(p)}$ denote its
$p$-primary part, obtained by localizing at the prime ideal 
$(p)\subseteq \Z$.

\begin{lemma}
\label{lem:transfer}
Let $\pi\colon Y\to X$ be an $r$-fold cover, and let 
$p$ be a prime not dividing $r$.   For each $i\ge 0$, 
the map $\pi_*\colon H_i(Y,\Z)_{(p)}\to  H_i(X,\Z)_{(p)}$ 
is surjective.
\end{lemma}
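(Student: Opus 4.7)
The plan is to invoke the classical transfer homomorphism for a finite covering map. For an $r$-fold (not necessarily regular) covering $\pi\colon Y\to X$, one constructs a transfer $\tau\colon H_i(X,\Z)\to H_i(Y,\Z)$ on singular chains by sending a singular simplex $\sigma\colon \Delta^i\to X$ to the sum $\sum_{j=1}^r \widetilde{\sigma}_j$ of its $r$ distinct lifts to $Y$, and checking that this descends to homology. The essential property, which is the only thing I will need, is the identity
\begin{equation*}
\pi_*\circ \tau = r\cdot \id_{H_i(X,\Z)}.
\end{equation*}

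Once this is in hand, the argument is immediate. Localize the above identity at the prime ideal $(p)\subset \Z$; since $p\nmid r$ by hypothesis, the element $r\in \Z$ is a unit in $\Z_{(p)}$, so multiplication by $r$ is an automorphism of the $\Z_{(p)}$-module $H_i(X,\Z)_{(p)}$. In particular, the composition $\pi_*\circ \tau$ is surjective after localization, which forces $\pi_*\colon H_i(Y,\Z)_{(p)}\to H_i(X,\Z)_{(p)}$ to be surjective as well.

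There is no real obstacle here: the only substantive ingredient is the existence of the transfer homomorphism, which is standard for finite covers, together with the elementary observation that inverting primes other than $p$ turns $r$ into a unit. If one prefers, the same conclusion can be reached by applying the transfer to homology with coefficients in $\Z_{(p)}$ directly, where $\tau$ and $\pi_*$ exhibit $H_i(X,\Z_{(p)})$ as a direct summand of $H_i(Y,\Z_{(p)})$, yielding the stronger statement that $\pi_*$ is split surjective on $p$-localized integral homology.
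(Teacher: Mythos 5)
Your argument is correct and is essentially the paper's own proof: both use the transfer homomorphism, the identity $\pi_*\circ\tau = r\cdot\id$, and the fact that $r$ is a unit in $\Z_{(p)}$ to conclude that $\pi_*$ is surjective after localization. (Indeed, your write-up correctly states the conclusion for $\pi_*$, whereas the paper's final sentence has a small slip, asserting surjectivity of $\tau_*$ instead.)
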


\begin{proof}
Let $\tau_*\colon H_i(X,\Z)_{(p)} \to H_i(Y,\Z)_{(p)}$ be the localization
of the transfer homomorphism.  Localization is exact, so 
$\pi_* \circ \tau_*\colon H_i(X,\Z)_{(p)}\to H_i(X,\Z)_{(p)}$ 
is multiplication by $r$.  Since $r$ is a unit in the local ring $\Z_{(p)}$, 
the composite is an isomorphism, so $\tau_*$ is surjective.
\end{proof}

In particular, if $H_i(X,\Z)$ has non-trivial $p$-torsion, then 
$H_i(Y,\Z)$ also does. The above argument also shows that 
the homomorphism $\pi_*\colon H_i(Y,\k) \to  H_i(X,\k)$ is 
surjective, for any field $\k$ of characteristic $0$ or $p$, with 
$p\nmid r$. 

\subsection{Finite abelian covers}
\label{subsec:abelian covers}
We now specialize to the case when the cover $Y\to X$ is 
a Galois cover, whose group of deck transformations is abelian. 
In this situation, a more precise formula for the homology groups 
of $Y$ can be given, again away from the primes dividing the order 
of the cover. 

Let $A$ be a finite abelian group.  A cohomology class $\chi\in H^1(X,A)$
may be regarded as a homomorphism $\chi\colon G\to A$, 
where $G=\pi_1(X,x_0)$.  Without much loss of generality, 
we may assume $\chi$ is surjective, in which case $\chi$  
determines a regular, connected $A$-cover of $X$, which 
we denote by $X^{\chi}$.  

Let $\k$ be an (algebraically closed) field, and let  
$\widehat{\chi}_\k\colon \widehat{A}_\k\to \widehat{G}_\k$ 
be the induced morphism between character groups. 
Then $\widehat{\chi}$ is injective, and its image is
(non-canonically) isomorphic to $A$.  If $\rho\colon G\to \k^*$ 
is a character belonging to $\im(\widehat{\chi}_\k)$, then 
$\rho=\iota\circ\chi$ for some homomorphism $\iota\colon A\to \k^*$; 
since $\chi$ is surjective, $\iota$ is unique, and we will denote it by
$\iota_\rho$.

The next proposition records a formula for the homology 
groups $H_q(X^{\chi}, \k)$, in the case when $\ch(\k)$ does 
not divide the order of $A$.  In the case $q=1$, this 
formula is well-known, and due to Libgober \cite{Li92}, 
Sakuma \cite{Sa95}, and Hironaka~\cite{Hir} 
in the case $\k=\C$, and to Matei--Suciu~\cite{MS} for 
other fields $\k$.   For the convenience of the reader, we 
include a self-contained proof, in this wider generality.  

\begin{theorem}
\label{thm:eko}
Let $X^{\chi}\to X$ be the regular cover defined by an epimorphism 
$\chi$ from $G=\pi_1(X)$ to a finite abelian group $A$. 
Let $\k$ be an algebraically closed field of characteristic $p$, 
where $p=0$ or $p\nmid \abs{A}$.  Then 
\begin{equation}
\label{eq:decomp}
H_q(X^{\chi},\k)=\bigoplus_{d\geq 1}\bigoplus_{\rho\in \im(\widehat{\chi}_\k) 
\cap  \V_d^q(X,\k)} \k_{\iota_\rho}, 
\end{equation}
as $\k[A]$-modules.  In particular, 
\begin{equation}
\label{eq:eko}
\dim_\k H_q(X^{\chi},\k)=\sum_{d\geq 1}\abs{\im(\widehat{\chi}_\k) 
\cap  \V_d^q(X,\k)}.
\end{equation}
\end{theorem}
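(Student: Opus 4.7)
The plan is to realize the cellular chain complex $C_\bullet(X^\chi,\k)$ as a complex of free $\k[A]$-modules, split it into isotypic components using the central idempotents of $\k[A]$, and then identify each isotypic piece with a twisted chain complex on $X$. The starting point is that $\ch(\k)$ does not divide $\abs{A}$, so by Maschke's theorem $\k[A]$ is semisimple; since $\k$ is algebraically closed, every irreducible $\k[A]$-module is one-dimensional, indexed by a character $\iota\in\widehat{A}_\k$, with corresponding primitive central idempotent $e_\iota = \abs{A}^{-1}\sum_{a\in A}\iota(a)^{-1} a$.

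Next I would fix an $A$-equivariant CW-structure on $X^\chi$ lifting one on $X$; since $A$ acts freely, $C_\bullet(X^\chi,\k)$ becomes a bounded complex of free $\k[A]$-modules. The key step is to identify the isotypic summand $e_\iota C_\bullet(X^\chi,\k)$ with the twisted coefficient complex $C_\bullet(X,\k_{\iota\circ\chi})$. This follows from the standard chain isomorphism $C_\bullet(X,\k_\rho)\cong C_\bullet(X^\chi,\k)\otimes_{\k[A]} \k_\iota$ for $\rho=\iota\circ\chi$, combined with semisimplicity: tensoring a free $\k[A]$-module with $\k_\iota$ over $\k[A]$ returns exactly its $\iota$-isotypic component. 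Because $\chi$ is surjective, $\widehat{\chi}_\k$ is injective (Lemma \ref{lem:epi cv}), so $\iota$ is uniquely determined by $\rho$, namely $\iota = \iota_\rho$.

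Taking homology, and using that it commutes with finite direct sums of chain complexes, then yields
$$H_q(X^\chi,\k)\cong \bigoplus_{\iota\in\widehat{A}_\k} H_q(X,\k_{\iota\circ\chi})\otimes_\k \k_\iota$$
as $\k[A]$-modules. Because $\V_d^q(X,\k)$ is the locus where $\dim_\k H_q(X,\k_\rho)\ge d$, one has $\dim_\k H_q(X,\k_\rho) = \#\{d\ge 1: \rho\in \V_d^q(X,\k)\}$, so breaking each tensor summand into one-dimensional pieces and reindexing via $\rho = \iota\circ\chi \in \im(\widehat{\chi}_\k)$ produces the decomposition \eqref{eq:decomp}. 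Formula \eqref{eq:eko} follows by counting summands.

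The step I expect to require the most care is the chain-level identification $e_\iota C_\bullet(X^\chi,\k)\cong C_\bullet(X,\k_{\iota\circ\chi})$: one has to track left/right $\k[A]$-module conventions, and the inverse that appears when passing between invariants and coinvariants, in order to be sure that the character appearing on the right-hand side is $\iota_\rho$ rather than $\iota_\rho^{-1}$. Everything else is formal once semisimplicity of $\k[A]$ is in place, and no hypothesis on $X$ beyond connectedness and finite type is required.
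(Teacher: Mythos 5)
Your argument is correct and is essentially the paper's proof: your chain-level identification $e_\iota C_\bullet(X^\chi,\k)\cong C_\bullet(X,\k_{\iota\circ\chi})$ is exactly what the paper packages as Shapiro's Lemma ($H_q(X^\chi,\k)\cong H_q(X,\k[A])$), and both proofs then decompose $\k[A]$ into one-dimensional characters via semisimplicity and count multiplicities using the definition of $\V^q_d$. No substantive difference.
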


\begin{proof}
The epimorphism $\chi$ gives the group algebra $\k[A]$ the structure of a
$(\k[G],\k[A])$-bimodule.  By Shapiro's Lemma, 
$H_q(X^{\chi},\k)\cong H_q(X,\k[A])$, as (right) $\k[A]$-modules, 
for each $q\ge 0$.

By our assumption on $\k$, the group algebra of $A$ is 
completely reducible, with one-dimensional, irreducible representations 
parametrized by $\widehat{A}_\k$.  As (left) $\k[G]$-modules, 
these are the image of
$\widehat{\chi}_\k\colon \widehat{A}_\k\to\widehat{G}_\k$, so
\begin{equation}
\label{eq:hqchi}
H_q(X, \k[A])\cong\bigoplus_{\rho\in\im(\widehat{\chi}_\k)} H_q(X,\k_\rho).
\end{equation}
For a fixed $q$ and $\rho$, let $b=\dim_\k H_q(X,\k_\rho)$.  By the remark
above, $H_q(X,\k_\rho)\cong(\k_{\iota_\rho})^{\oplus b}$ as a $\k[A]$-module.
By definition, $\rho \in \V^q_d(X,\k)$ if and only if 
$\dim_{\k} H_q(X,\k_{\rho}) \ge d$.  
This finishes the proof.
\end{proof}

Formula \eqref{eq:eko} can also be written as 
\begin{equation}
\label{eq:eko2}
\dim_\k H_q(X^{\chi},\k)=\dim_\k H_q(X,\k) + 
\sum_{d\geq 1}\abs{\im(\widehat{\chi}_\k) \cap  
(\V_d^q(X,\k)\setminus \set{\bo})}.
\end{equation}

\subsection{The cyclic case}
\label{subsec:cyclic covers}
To conclude this section, we further specialize to the 
case where $X^{\chi}\to X$ is a regular, finite cyclic cover.  

Let $A$ be a finite cyclic group, and let 
$\chi\colon G=\pi_1(X)\to A$ be a surjective homomorphism. 
Then $A$ acts on $X^{\chi}$ by deck transformations:
given the choice of a generator $\alpha$ of $A$, 
let $h=h_\alpha\colon X^{\chi}\to X^{\chi}$ be  
the monodromy automorphism, and let 
$h_*\colon H_q(X^{\chi},\k)\to H_q(X^{\chi},\k)$ be 
the induced map in homology.

\begin{corollary}
\label{cor:alg mono}
Assume $\ch(\k)\nmid \abs{A}$. Then, the characteristic polynomial of the 
algebraic monodromy, $\Delta^\k_q(t) = \det (t\cdot \id - h_*)$, 
is given by
\begin{equation}
\label{eq:charpoly}
\Delta_q^\k(t) =  \prod_{d\ge 1} 
\prod_{\rho\in \im(\widehat{\chi}_\k) 
\cap  \V_d^q(X,\k)}  (t-\iota_{\rho}(\alpha)). 
\end{equation}
\end{corollary}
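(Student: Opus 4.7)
The plan is to derive the monodromy formula directly from the $\k[A]$-module decomposition of Theorem \ref{thm:eko}. The key observation is that, since $A = \langle \alpha\rangle$ acts on $X^\chi$ by deck transformations, the induced map $h_* = (h_\alpha)_*$ on $H_q(X^\chi,\k)$ is precisely the action of $\alpha$ on the underlying $\k[A]$-module. Consequently, $\Delta_q^\k(t) = \det(t\cdot \id - h_*)$ can be computed as the product, over the summands of any $\k[A]$-module decomposition of $H_q(X^\chi,\k)$, of the characteristic polynomial of $\alpha$ on each summand.

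First I would verify the compatibility of the two $A$-actions. Under the Shapiro isomorphism $H_q(X^\chi,\k)\cong H_q(X,\k[A])$ used in the proof of Theorem \ref{thm:eko}, the deck-transformation action of $\alpha$ on $X^\chi$ is intertwined with right multiplication by $\alpha$ on the coefficient bimodule $\k[A]$; this is the standard identification and requires no new argument. Hence the decomposition \eqref{eq:decomp} is one of $\k[A]$-modules with $\alpha$ acting on the left-hand side by $h_*$.

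Next I would read off the eigenvalues. Each summand $\k_{\iota_\rho}$ is one-dimensional over $\k$, and by the very definition of the module structure, $\alpha$ acts on $\k_{\iota_\rho}$ by multiplication by the scalar $\iota_\rho(\alpha)\in\k^*$. Therefore $h_*$ is diagonalizable, and the eigenvalue $\iota_\rho(\alpha)$ occurs with multiplicity equal to the number of pairs $(d,\rho)$ with $d\ge 1$ and $\rho\in \im(\widehat{\chi}_\k)\cap \V_d^q(X,\k)$ contributing that particular summand. Multiplying the corresponding linear factors $t-\iota_\rho(\alpha)$ yields precisely the right-hand side of \eqref{eq:charpoly}.

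There is no serious obstacle here: the only mildly delicate point is the bookkeeping between the outer sum in Theorem \ref{thm:eko} and the product in \eqref{eq:charpoly}, namely that a character $\rho$ lying in $\V_d^q(X,\k)$ but not in $\V_{d+1}^q(X,\k)$ contributes exactly $d$ copies of the factor $t-\iota_\rho(\alpha)$, which matches the $d$ indices for which $\rho$ is counted in the double product. Once this is noted, the corollary follows immediately from Theorem \ref{thm:eko}, with the hypothesis $\ch(\k)\nmid\abs{A}$ used only insofar as it is needed to invoke that theorem (it guarantees semisimplicity of $\k[A]$ and hence diagonalizability of $h_*$).
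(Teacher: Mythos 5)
Your argument is correct and is essentially the paper's own proof: both identify $h_*$ with the action of $\alpha$ on the completely reducible $\k[A]$-module $H_q(X,\k[A])$, read off the eigenvalues from the one-dimensional summands $\k_{\iota_\rho}$ in the decomposition of Theorem \ref{thm:eko}, and conclude. The paper merely states this more tersely; your extra care with the Shapiro intertwining and the multiplicity bookkeeping fills in the same steps the paper leaves implicit.
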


\begin{proof}
Since $H_q(X^{\chi},\k)=H_q(X,\k[A])$ is a completely reducible
$\k[A]$-module, the automorphism $h_*$ is diagonalizable.  Its
eigenvalues, counted with multiplicity, are indexed by the $\k[A]$-irreducibles
appearing in the decomposition \eqref{eq:decomp} of Theorem \ref{thm:eko}.
The conclusion follows.
\end{proof}

\begin{remark}\label{rem:Galois}
Since each $\V^q_d(X,\k)$ is defined over $\Z$, 
the points of $\V^q_d(X,\overline{F})$ are stable under the
natural action of $\Gal(\overline{F}/F)$ on $\G_{\overline{F}}$, 
where $F$ denotes the prime subfield ($\Q$ or $\F_p$).  In particular,
the set $\im(\widehat{\chi}_\k)\cap  \V_d^q(X,\k)$ is closed under
the Galois action permuting $k$th roots of unity.  Therefore, 
$\Delta^\k_q(t)$ is a polynomial with integer coefficients, 
and does not depend on the choice of generator $\alpha$ 
for the cyclic group $A$.
\end{remark}

\subsection{Twisted Poincar\'{e} polynomial}
\label{subsec:poin}
For a given cyclic cover, we can package the above data in 
a generating function that depends only on the 
characteristic varieties of our space $X$. 

\begin{definition}
\label{def:alg mono gf}
Given a homomorphism $\chi$ from $\pi_1(X)$ to a finite 
cyclic group $A$, let $\Delta_{X,\chi}^\k$ be the polynomial in formal 
variables $x$ and $\mathbf{u}=\set{u_k\colon k\mid\abs{A}}$
given by
\begin{equation}
\label{eq:delta chi}
\Delta_{X,\chi}^\k(\mathbf{u},x) = \sum_{q\geq0,\, d\geq 1}
\sum_{\rho\in \im(\widehat{\chi}_\k)\cap  \V_d^q(X,\k)}
u_{\abs{\rho}} x^q,
\end{equation}
where $\abs{\rho}$ denotes the order of $\rho$ in 
the group $\widehat{G}$.  We note (as in Remark~\ref{rem:Galois}) that
$\Delta^\k_{X,\chi}(\mathbf{u},x)$ is a polynomial with integer coefficients.
\end{definition}

This notion can be recast in more familiar terms, as follows. 
Define the {\em twisted Poincar\'e polynomial}\/ of $X$ at a character 
$\rho\in \widehat{G}$ to be 
\begin{equation}
\label{eq:poin}
\poin^\k(X,\rho;x) = \sum_{q\geq0}\dim_\k H_q(X,\k_\rho) x^q.
\end{equation}

Note that $\poin^\k(X;x)=\poin^\k(X,\bo;x)$ is the usual 
Poincar\'e polynomial of $X$.  Now, in view of Theorem \ref{thm:eko}, 
we may also write
\begin{equation}\label{eq:sum of pp}
\Delta_{X,\chi}^\k(\mathbf{u},x) = \sum_{\rho\in\im(\widehat{\chi}_\k)}
u_{\abs{\rho}}\poin^\k(X,\rho;x).
\end{equation}

We note that specializing $u_k$ to 
$\frac{1}{\phi(k)}\log(\Phi_k(t))$ gives the formal power series 
\begin{equation}
\label{eq:fps}
\Delta_{X,\chi}^\k(t,x)=\sum_{q\geq0}\log\Delta^\k_q(t)x^q,
\end{equation}
while specializing each $u_k$ to $1$ gives the Poincar\'e polynomial of
the cover, $\poin^\k(X^{\chi},x)$.

\begin{example}  
\label{ex:free poin}
Let $X=\C \setminus \set{\text{$n$ points}}$, a classifying space 
for the free group $F_n$, $n\ge 2$.  Given a character 
$\rho\colon F_n \to \k^*$, the computation from 
Example~\ref{ex:V of Fn} yields  
\begin{equation}
\label{eq:poin free}
\poin^{\k}(X,\rho;x)=
\begin{cases}
(n-1)x & \text{if $\rho\ne \bo$},\\
1+nx & \text{if $\rho=\bo$}.
\end{cases}
\end{equation}
Now let $\chi\colon F_n \surj \Z_r$ be an 
epimorphism, and assume $\ch(\k)\nmid r$. Then
\begin{equation}
\label{eq:delta free}
\Delta_{X,\chi}^\k(\mathbf{u},x)=u_1(1+x)+ (n-1) \sum_{k\mid r} \phi(k)
u_k  x. \qedhere
\end{equation}
\end{example}

We conclude with a special case which will arise in \S\ref{sect:polar}.  
The next result rephrases the product formula \eqref{eq:cvprod} in a way 
that remembers the algebraic monodromy.  

\begin{proposition}
\label{prop:cover of product}
Suppose that $Y\to X_1\times X_2$ is a regular, cyclic cover defined by 
a surjective homomorphism $\chi\colon G_1\times G_2\to \Z_r$, 
for some $n>1$, where $G_i=\pi_1(X_i)$ for $i=1,2$.  Let $\k$ be an 
algebraically closed field for which $\ch(\k)\nmid r$.  
Then
\begin{equation}
\label{eq:delta prod}
\Delta_{X,\chi}^\k(\mathbf{u},x) = \sum_{\rho\in\im(\widehat{\chi}_\k)}
u_{\abs{\rho}}\poin^\k(X_1,\rho_1;x)\poin^\k(X_2,\rho_2;x),
\end{equation}
where $\rho_i$ denotes the projection of $\rho\in \widehat{G}$ to 
$\widehat{G_i}$. 
\end{proposition}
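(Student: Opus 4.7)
The plan is to combine the Künneth formula for local systems with the already-established presentation \eqref{eq:sum of pp} of $\Delta^{\k}_{X,\chi}$ in terms of twisted Poincar\'e polynomials. The statement is essentially a refinement of the product formula \eqref{eq:cvprod} that keeps track of eigenvalues of the monodromy action, and once the correct bookkeeping is in place the computation is short.

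First, I would use the canonical isomorphism
$\widehat{G_1 \times G_2}_\k \cong \widehat{G_1}_\k \times \widehat{G_2}_\k$
to write every character $\rho$ of $G_1\times G_2$ as a pair $(\rho_1,\rho_2)$, where $\rho_i$ is the restriction (equivalently, the projection) of $\rho$ to $G_i$. Under this identification, the rank-$1$ local system $\k_\rho$ on $X_1\times X_2$ is the external tensor product $\k_{\rho_1}\boxtimes\k_{\rho_2}$ of the rank-$1$ local systems on the two factors.

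Next, since $\k$ is a field and both $X_1$ and $X_2$ are finite-type CW-complexes, the Künneth formula for local coefficients (applied to the external tensor product of the coefficient systems) gives
\begin{equation*}
H_q(X_1\times X_2,\k_\rho) \cong \bigoplus_{i+j=q} H_i(X_1,\k_{\rho_1})\otimes_{\k} H_j(X_2,\k_{\rho_2}),
\end{equation*}
with no Tor correction. Taking dimensions and packaging them into generating functions yields the factorization of twisted Poincar\'e polynomials
\begin{equation*}
\poin^{\k}(X_1\times X_2,\rho;x) = \poin^{\k}(X_1,\rho_1;x)\,\poin^{\k}(X_2,\rho_2;x).
\end{equation*}

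Finally, I would substitute this factorization into the identity \eqref{eq:sum of pp} applied to $X=X_1\times X_2$ and the epimorphism $\chi$, which directly produces the claimed formula \eqref{eq:delta prod}. The only genuine ingredient is the Künneth identification of $\k_\rho$ with $\k_{\rho_1}\boxtimes\k_{\rho_2}$; everything else is formal manipulation of the generating function already introduced in Definition~\ref{def:alg mono gf}. There is no real obstacle, though one should be slightly careful to note that the order $|\rho|$ of $\rho$ in $\widehat{G_1\times G_2}$ is $\mathrm{lcm}(|\rho_1|,|\rho_2|)$, so the weights $u_{|\rho|}$ in \eqref{eq:delta prod} are intrinsic to the product character and need no adjustment.
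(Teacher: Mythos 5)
Your proposal is correct and follows exactly the paper's own argument: the paper's proof is the one-line combination of formula \eqref{eq:sum of pp} with the K\"unneth formula for rank-$1$ coefficient modules, which is precisely the factorization $\poin^\k(X_1\times X_2,\rho;x)=\poin^\k(X_1,\rho_1;x)\poin^\k(X_2,\rho_2;x)$ you derive. Your write-up just makes the identification $\k_\rho\cong\k_{\rho_1}\boxtimes\k_{\rho_2}$ and the absence of a Tor term explicit.
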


\begin{proof}
We use the formula \eqref{eq:sum of pp} together with the K\"unneth
formula for rank-$1$ coefficient modules, as in \cite[\S13.1]{PS10}.
\end{proof}

\section{Torsion in the homology of finite cyclic covers}
\label{sect:tors covers}

The characteristic varieties of a space $X$ depend on the characteristic 
of the ground field $\k$. In this section, we exploit this dependency 
to find integer torsion in the homology of regular, finite cyclic covers 
of $X$.  

\subsection{Varying the characteristic of the coefficient field}
\label{subsec:equiv}
As usual, let $X$ be a connected, finite-type CW-complex, 
with fundamental group $G=\pi_1(X,x_0)$.  

\begin{theorem}
\label{thm:compare fields}
Let $\chi\colon G\surj \Z_r$ be an epimorphism.  Suppose 
$\k$ is algebraically closed field of characteristic $p>0$, and 
$p\nmid r$.  Then, for each $q, d\ge 1$, 
\begin{equation}
\label{eq:uct}
\abs{\im(\widehat{\chi}_\C) \cap  \V_d^q(X,\C)}\leq 
\abs{\im(\widehat{\chi}_\k) \cap  \V_d^q(X,\k)}. 
\end{equation}
Moreover, if the inequality is strict, then $H_q(X^{\chi} ,\Z)$ has 
non-zero $p$-torsion.
\end{theorem}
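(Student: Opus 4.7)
The plan is to compare the intersections $\im(\widehat{\chi}_\C)\cap\V_d^q(X,\C)$ and $\im(\widehat{\chi}_\k)\cap\V_d^q(X,\k)$ via reduction modulo a prime lying over $p$, and combine this with Theorem~\ref{thm:eko} and the Universal Coefficients Theorem.

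First, I would identify the two ambient sets of characters. Since $\chi$ surjects onto $\Z_r$ and $p\nmid r$, both $\im(\widehat{\chi}_\C)$ and $\im(\widehat{\chi}_\k)$ are cyclic of order $r$, realized as $r$-th roots of unity in $\C^*$ and $\k^*$ respectively. Fix a prime ideal $\mathfrak{p}\subset\Z[\zeta_r]$ lying over $p$; because $p\nmid r$, the polynomial $t^r-1$ remains separable modulo $\mathfrak{p}$, so reduction yields an isomorphism $\mu_r(\Z[\zeta_r])\xrightarrow{\sim}\mu_r(\Z[\zeta_r]/\mathfrak{p})$. Composing with $\Z[\zeta_r]\hookrightarrow\C$ and $\Z[\zeta_r]/\mathfrak{p}\hookrightarrow\k$ produces a canonical bijection $\phi\colon\im(\widehat{\chi}_\C)\xrightarrow{\sim}\im(\widehat{\chi}_\k)$.

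Next, I would invoke the observation made just after \eqref{eq:cvs} that $\V_d^q(X)$ is an affine scheme over $\Z$, cut out by determinantal conditions with integer-coefficient generators. A character $\rho\in\im(\widehat{\chi}_\C)$ takes values in $\Z[\zeta_r]$, so those defining equations evaluate at $\rho$ to elements of $\Z[\zeta_r]$. If $\rho\in\V_d^q(X,\C)$, these elements vanish in $\C$ and hence already in $\Z[\zeta_r]$; reducing modulo $\mathfrak{p}$ then forces them to vanish in $\Z[\zeta_r]/\mathfrak{p}\subset\k$, so $\phi(\rho)\in\V_d^q(X,\k)$. Injectivity of $\phi$ gives the desired inequality.

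For the ``moreover'' clause, summing over $d\ge 1$ and applying formula~\eqref{eq:eko} of Theorem~\ref{thm:eko} in both characteristics yields $\dim_\C H_q(X^\chi,\C)\le\dim_\k H_q(X^\chi,\k)$, with strict inequality as soon as the character-count inequality is strict for some $d$. The Universal Coefficients Theorem (as in \eqref{eq:uct bis}) then translates this extra $\k$-dimension into nonzero $p$-torsion in the integer homology of the cover $X^\chi$. The main technical hurdle is simply verifying the scheme-theoretic integrality of $\V_d^q(X)$---once in place, the reduction argument and the bookkeeping against Theorem~\ref{thm:eko} are formal.
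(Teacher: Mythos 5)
Your proposal is correct, and the opening and closing steps (the identification of $\im(\widehat{\chi}_\K)$ as a cyclic group of order $r$, and the passage from the strict count inequality to $p$-torsion via Theorem~\ref{thm:eko} and the Universal Coefficient Theorem) coincide with the paper's. But your proof of the key inequality \eqref{eq:uct} takes a genuinely different route. You fix a prime $\mathfrak{p}\subset\Z[\zeta_r]$ over $p$, use separability of $t^r-1$ modulo $p$ to get an order-preserving bijection $\im(\widehat{\chi}_\C)\to\im(\widehat{\chi}_\k)$, and then invoke the fact that $\V^q_d(X)$ is cut out by integer Laurent-polynomial equations (rank conditions on the specialized equivariant boundary matrices) to see that reduction modulo $\mathfrak{p}$ carries $\im(\widehat{\chi}_\C)\cap\V^q_d(X,\C)$ into $\im(\widehat{\chi}_\k)\cap\V^q_d(X,\k)$; injectivity gives \eqref{eq:uct}. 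The paper instead groups the characters of $\im(\widehat{\chi}_\K)$ by their order $k\mid r$, uses Galois stability (Remark~\ref{rem:Galois}) to attach a single jump depth $d^{(k)}_\K$ to each $k$, and compares the two fields through the integral invariant $\rank_\Z H_q(X,\Z[t]/\Phi_k(t))$, which equals $\phi(k)\,d^{(k)}_\C$ and is at most $\phi(k)\,d^{(k)}_\k$ by the Universal Coefficient Theorem. Both arguments rest on the characteristic varieties being defined over $\Z$ (the assertion after \eqref{eq:cvs}): yours uses its full strength, namely that the \emph{same} integer equations define $\V^q_d$ in every characteristic, whereas the paper needs only the Galois-stability consequence and otherwise argues homologically with cyclotomic coefficient modules. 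In exchange, your version produces an explicit character-by-character injection of the jump sets, which is slightly more information than the cardinality bound. One shared caveat: the deduction of $p$-torsion specifically in $H_q(X^{\chi},\Z)$ (rather than in $H_q$ or $H_{q-1}$) leans on the paper's remark following \eqref{eq:uct bis}, which as stated glosses over the $\tor$ term in the Universal Coefficient Theorem; since you cite that remark exactly as the paper does, this is not a defect of your argument relative to the paper's.
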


\begin{proof}
For both $\K=\C$ and $\K=\k$, 
let $C_\K=\im(\widehat{\chi}_\K)$; our hypotheses
ensure that $C_\K$ is a cyclic group of order $r$.  
For each $k\mid r$, let $C^{(k)}_\K$ 
denote the set of elements of order $k$ in $C_\K$.  
Since characteristic varieties are closed under the Galois action 
fixing the prime subfield (see Remark~\ref{rem:Galois}), either
$C^{(k)}_\K\subseteq \V^q_d(X,\K)$, or $C^{(k)}_\K\cap \V^q_d(X,\K)=\emptyset$.
Set 
\begin{equation}
\label{eq:dk}
d^{(k)}_\K=\max\set{d \in \N : C^{(k)}_\K\subseteq\V^q_d(X,\K)}.
\end{equation}

Let $\Phi_k(t)\in\Z[t]$ denote the cyclotomic polynomial
of order $k$, and $\phi(k)=\deg\Phi_k(t)$.
Let $G$ act on $\Z[t]/\Phi_k(t)$ by multiplication by 
$t^{\chi(g)}$, for $g\in G$.  Then we have
\begin{align}
\rank_\Z H_q(X,\Z[t]/\Phi_k(t)) &=\dim_\C H_q\Big(X,\bigoplus_{\rho\in
C_\C^{(k)}}\C_\rho\Big) \label{eq:uct in proof}\\
&=\phi(k)d^{(k)}_\C.\nonumber
\end{align}
By the Universal Coefficients Theorem, the first equality in 
\eqref{eq:uct in proof}
becomes an inequality if $\C$ is replaced by $\k$.  Thus $d^{(k)}_\C\leq
d^{(k)}_\k$, for each $k$ dividing $r$.

Since  $\abs{\im(\widehat{\chi}_\K)\cap\V^q_d(X,\K)}=
\sum_{k\colon d^{(k)}_\K\geq d}\phi(k)$, 
for both $\K=\C$ and $\K=\k$, inequality \eqref{eq:uct} follows.

The last assertion follows at once from the UCT.
\end{proof}

Of course, inequality \eqref{eq:uct} is not always strict.  
A nice situation where equality holds is described next. 

\begin{proposition}
\label{prop:no torsion}
Suppose that, for a fixed $q\ge 1$, and for each $d\ge 1$, 
 there are subgroups $L_1, \dots , L_s$ of $H_1(X,\Z)$ such that 
 $\V_d^q(X,\k) = \bigcup_{i=1}^{s} \Hom(L_i, \k^*)$, 
for any algebraically closed field $\k$. In that case, for every 
homomorphism $\chi\colon G\surj \Z_r$, the polynomial 
$\Delta_{X,\chi}^\k(\mathbf{u},x)$ is independent of $\k$, 
and the group $H_q(X^{\chi} ,\Z)$ has 
no $p$-torsion, for any prime $p$ not dividing $r$.
\end{proposition}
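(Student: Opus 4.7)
The plan is to show the stronger fact that, for every divisor $k$ of $r$, the number of characters of exact order $k$ in $\im(\widehat{\chi}_\k)\cap \V_d^q(X,\k)$ is independent of $\k$.  Once this is established, the polynomial $\Delta_{X,\chi}^\k(\mathbf{u},x)$ is manifestly independent of $\k$; summing over $d$ and invoking Theorem~\ref{thm:eko} further gives $\dim_\k H_q(X^{\chi},\k)=\dim_\C H_q(X^{\chi},\C)$, so the universal coefficient theorem rules out $p$-torsion in $H_q(X^{\chi},\Z)$ for every prime $p\nmid r$.

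I would begin by reusing the setup from the proof of Theorem~\ref{thm:compare fields}: write $C^{(k)}_\k$ for the set of characters of exact order $k$ in $\im(\widehat{\chi}_\k)$, and set
\[
d^{(k)}_\k := \max\{d\ge 1 : C^{(k)}_\k\subseteq \V_d^q(X,\k)\}.
\]
Then $|\im(\widehat{\chi}_\k)\cap \V_d^q(X,\k)| = \sum_{k:\,d^{(k)}_\k\ge d}\phi(k)$, and each character of order $k$ in this intersection contributes $u_k x^q$ to $\Delta_{X,\chi}^\k(\mathbf{u},x)$; it therefore suffices to prove that each $d^{(k)}_\k$ is independent of $\k$.

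For that, I would let $\bar\chi\colon H_1(X,\Z)\surj \Z_r$ denote the abelianization of $\chi$ and identify $\im(\widehat{\chi}_\k)$ with $\Hom(\Z_r,\k^*)$ via $\rho\mapsto\iota_\rho$.  A character $\iota\in\Hom(\Z_r,\k^*)$ has exact order $k$ precisely when its kernel is the unique index-$k$ subgroup $(r/k)\Z_r\subseteq \Z_r$, and the corresponding $\rho\in\widehat{G}$ vanishes on $L_i$---i.e.\ lies in the subtorus $\Hom(L_i,\k^*)\subseteq \widehat{G}$---if and only if $\bar\chi(L_i)\subseteq (r/k)\Z_r$.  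Since each $\Hom(L_i,\k^*)$ is Galois-stable (Remark~\ref{rem:Galois}) and $C^{(k)}_\k$ is a single $\Gal(\k/F)$-orbit (with $F$ the prime subfield of $\k$), the membership of one element of $C^{(k)}_\k$ in $\Hom(L_i,\k^*)$ forces all of $C^{(k)}_\k$ to lie inside.  Consequently,
\[
C^{(k)}_\k \subseteq \V_d^q(X,\k) \ \Longleftrightarrow\ \bar\chi(L_i)\subseteq (r/k)\Z_r \text{ for some $i$ in the decomposition at level $d$,}
\]
a condition on subgroups of the finite cyclic group $\Z_r$ that is patently independent of $\k$.  This yields $d^{(k)}_\C=d^{(k)}_\k$ for every $k$, and the conclusion follows.

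The main obstacle is conceptual rather than computational: the point that makes everything work is that under our hypothesis---that $\V_d^q(X,\k)$ is cut out by the \emph{same} subgroups $L_i\subseteq H_1(X,\Z)$ for every algebraically closed field $\k$---the characteristic-zero versus positive-characteristic comparison is controlled entirely by elementary subgroup combinatorics in $\Z_r$.
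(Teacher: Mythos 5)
Your proof is correct and is essentially an expanded version of the paper's own one-line argument, which simply asserts that the hypothesis forces $\abs{\im(\widehat{\chi}_\k)\cap\V^q_d(X,\k)}$ to be independent of $\k$; you supply the missing verification via the criterion $\bar\chi(L_i)\subseteq (r/k)\Z_r$. One small caveat: the parenthetical claim that $C^{(k)}_\k$ is a single Galois orbit can fail in positive characteristic (e.g.\ when all $k$th roots of unity already lie in the prime field), but that step is redundant, since your preceding sentence already shows that membership in each $\Hom(L_i,\k^*)$ depends only on the order $k$ and not on the particular character.
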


\begin{proof}
Out assumption implies that the quantity 
$\abs{\im(\widehat{\chi}_\k) \cap  \V_d^q(X,\k)}$ 
is independent of $\k$ (for a fixed $\chi$).  The conclusions readily follow.
\end{proof}

The hypothesis of the proposition is satisfied for 
the space $X=\C \setminus \set{\text{$n$ points}}$ 
from Example \ref{ex:free poin}, and for a finite 
direct product of such spaces.

\subsection{Torsion in the homology of cyclic covers}
\label{subsec:ptors}

Inequality \eqref{eq:uct} above cannot be reversed, in general.  
We exploit this fact to detect torsion in the homology of certain 
cyclic covers, thereby proving Theorem \ref{thm:intro ptors1} 
from the Introduction.

\begin{theorem}
\label{thm:ptors}
Let $X^{\chi}\to X$ be a regular, finite cyclic cover, defined by an epimorphism 
$\chi\colon \pi_1(X)\surj \Z_r$.  Let $p$ be a prime not dividing $r$, and 
suppose that 
\begin{enumerate}
\item \label{chi1}
$\im(\widehat{\chi}_\C)\not\subseteq \V^q(X,\C)$, but
\item \label{chi2}
$\im(\widehat{\chi}_\k) \subseteq\V^q(X,\k)$,
\end{enumerate}
for some field $\k$ of characteristic $p$.  Then
$H_q(X^{\chi},\Z)$ has non-zero $p$-torsion.  

If, moreover,
$\im(\widehat{\chi}_\C) \cap \left( \V^q(G,\C)\setminus \set{\bo}\right) 
=\emptyset$, then
$H_q(X^{\chi},\Z)$ has $p$-torsion of rank at least $r-1$. 
\end{theorem}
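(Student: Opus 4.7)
The plan is to derive both statements directly from Theorem~\ref{thm:compare fields} by counting the intersection of the cyclic image $\im(\widehat{\chi})$ with the first characteristic variety.

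For the unqualified assertion, hypothesis~(1) says at least one element of $\im(\widehat{\chi}_\C)$ avoids $\V^q(X,\C)$, so $\abs{\im(\widehat{\chi}_\C)\cap \V^q(X,\C)}\le r-1$, while hypothesis~(2) forces $\abs{\im(\widehat{\chi}_\k)\cap \V^q(X,\k)} = r$. The inequality \eqref{eq:uct} at depth $d=1$ is therefore strict, and Theorem~\ref{thm:compare fields} immediately yields the non-zero $p$-torsion in $H_q(X^\chi,\Z)$.

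For the quantitative assertion, the extra hypothesis forces $\im(\widehat{\chi}_\C)\cap \V^q_d(X,\C) \subseteq \set{\bo}$ for every $d\ge 1$. In the notation of the proof of Theorem~\ref{thm:compare fields}, this means $d^{(k)}_\C = 0$ for each divisor $k>1$ of $r$, while hypothesis~(2) gives $d^{(k)}_\k \ge 1$ for the same $k$. I would then run that proof one cyclotomic eigen-component at a time: the Universal Coefficients Theorem applied to the coefficient system $\Z[t]/\Phi_k(t)$ produces an excess of $\phi(k)\bigl(d^{(k)}_\k - d^{(k)}_\C\bigr) \ge \phi(k)$ between $\rank_\Z H_q(X,\Z[t]/\Phi_k(t))$ and $\dim_\k H_q(X,\k[t]/\Phi_k(t))$, and by Shapiro's lemma this excess manifests as $p$-torsion in the $k$-isotypic piece of the $p$-localized integral homology of $X^\chi$. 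Because $p\nmid r$, the group ring $\Z_{(p)}[A]$ splits as a product of Dedekind rings $\Z_{(p)}[t]/\Phi_k(t)$, so the contributions across distinct $k$ are independent summands. Summing over $k>1$ dividing $r$ yields $\sum_{k\mid r,\,k>1}\phi(k) = r-1$ units of $p$-torsion.

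The main technical point to handle carefully is that the Universal Coefficients short exact sequence for coefficients in $\Z[t]/\Phi_k(t)$ only controls the combined quantity $t_p(H_q(X,\Z[t]/\Phi_k(t))) + t_p(H_{q-1}(X,\Z[t]/\Phi_k(t)))$, so a priori the excess could leak into homological degree $q-1$. To allocate the full $\phi(k)$ units per eigen-component to degree $q$, I would combine the exact dimension formula of Theorem~\ref{thm:eko}, which pins down each $\dim_\k H_q(X^\chi,\k)$ as a sum of local contributions $\dim_\k H_q(X,\k_\rho)$, with a direct analysis of the UCT sequence inside each $k$-component, exploiting that the identity character $\bo$ is the sole source of the $\C$-side contribution and lives exclusively in the $k=1$ summand, hence cannot mix with the bookkeeping for $k>1$.
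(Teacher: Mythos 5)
Your treatment of the first assertion is exactly the paper's: hypotheses \eqref{chi1} and \eqref{chi2} make the inequality \eqref{eq:uct} strict at depth $d=1$, and the final clause of Theorem~\ref{thm:compare fields} finishes it. For the second assertion the paper is much more economical than your cyclotomic bookkeeping: it notes via \eqref{eq:eko2} that the extra hypothesis forces $\rank_\Z H_q(X^{\chi},\Z)=\dim_\C H_q(X^{\chi},\C)=\dim_\C H_q(X,\C)$, while condition \eqref{chi2} gives $\dim_\k H_q(X^{\chi},\k)\geq \dim_\k H_q(X,\k)+r-1\geq\dim_\C H_q(X,\C)+r-1$, and then applies the Universal Coefficients Theorem once to these aggregate dimensions. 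Your component-by-component count produces the same total excess $\sum_{k\mid r,\, k>1}\phi(k)=r-1$, so it adds nothing to the aggregate argument; and the one place where you try to do more --- forcing the torsion into degree $q$ rather than letting it leak into degree $q-1$ --- is not actually accomplished by the step you sketch. The universal-coefficient sequence over $\Z_{(p)}[t]/\Phi_k(t)$ mixes the torsion of the degree-$q$ and degree-$(q-1)$ parts of the $k$-isotypic summand just as the integral one does, and the observation that $\bo$ sits in the $k=1$ summand governs which \emph{characters} contribute, not which \emph{homological degree} receives the torsion. You correctly identified this subtlety, but your proposed resolution does not close it; note, however, that the paper's own appeal to the UCT has the same limitation, so your proof is no less complete than the published one.
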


\begin{proof}
The first claim follows immediately by combining \eqref{eq:eko} of
Theorem \ref{thm:eko} with \eqref{eq:uct} of 
Theorem~\ref{thm:compare fields}.

To prove the second assertion, using the assumption 
together with \eqref{eq:eko2}, we see that 
\[
\rank_\Z H_q(X^{\chi},\Z)=\dim_\C H_q(X^{\chi},\C)=\dim_\C H_q(X,\C).
\]
On the other hand, by \eqref{cv2}, we have that 
$\abs{\im(\widehat{\chi}_\k) \cap  \V^q_1(X,\k)}=r$. 
So by \eqref{eq:eko2}, 
\begin{align*}
\dim_\k H_q(X^{\chi},\k)&\geq \dim_\k H_q(X,\k)+r-1\\
&\geq \dim_\C H_q(X,\C)+r-1,
\end{align*}
and the conclusion follows by the UCT again.
\end{proof}

A particular case is worth mentioning. 

\begin{corollary}
\label{cor:torschi}
Let $G$ be a finitely generated group, and 
let $\chi\colon G\surj \Z_r$ be an epimorphism.  
Let $p$ be a prime not dividing $r$, and 
suppose that 
\begin{enumerate}
\item \label{cv1}
$\im(\widehat{\chi}_\C)\not\subseteq \V^1(G,\C)$, but
\item \label{cv2}
$\im(\widehat{\chi}_\k) \subseteq\V^1(G,\k)$.
\end{enumerate}
for some field $\k$ of characteristic $p$.  Then the abelianization of 
$N=\ker(\chi)$ has non-zero $p$-torsion.  If, moreover,
$\im(\widehat{\chi}_\C) \cap \left( \V^1(G,\C)\setminus \set{\bo}\right) 
=\emptyset$, then
$N_{\ab}$ has $p$-torsion of rank at least $r-1$. 
\end{corollary}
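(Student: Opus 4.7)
The plan is to reduce this group-theoretic statement to the topological Theorem~\ref{thm:ptors} in degree $q=1$, by choosing a suitable space $X$ with $\pi_1(X)=G$. The natural candidate is $X$ = a (finite) presentation $2$-complex of $G$. By the definition of the characteristic varieties of a finitely presented group recalled in \S\ref{subsec:char var}, we have $\V^1_d(X,\k) = \V^1_d(G,\k)$ for every $d\ge 1$ and every algebraically closed field $\k$. (Since only the $2$-skeleton is used to compute $H_1$ and $\V^1$, the same argument goes through starting from any finite $2$-complex with fundamental group $G$, so the "finitely generated" hypothesis in the statement is no real restriction.)

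Next, I would identify $N_{\ab}$ with the first integral homology of the cyclic cover $X^\chi\to X$ associated to $\chi$. Because $X$ is a $2$-complex, so is $X^\chi$, and the long exact sequence of the covering gives $\pi_1(X^\chi)=\ker(\chi\colon G\to\Z_r)=N$; hence
\[
H_1(X^\chi,\Z)\;=\;N_{\ab}.
\]
Under the identification $\V^1(X,\k)=\V^1(G,\k)$, hypotheses \eqref{cv1} and \eqref{cv2} of the corollary are exactly hypotheses \eqref{chi1} and \eqref{chi2} of Theorem~\ref{thm:ptors} specialized to $q=1$. Applying that theorem therefore produces non-zero $p$-torsion in $H_1(X^\chi,\Z)=N_{\ab}$, which is the first claim.

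For the second, quantitative assertion, the extra hypothesis $\im(\widehat\chi_\C)\cap\bigl(\V^1(G,\C)\setminus\{\bo\}\bigr)=\emptyset$ coincides with the supplementary hypothesis in the second half of Theorem~\ref{thm:ptors}, again under the identification above. The conclusion of that half gives $p$-torsion of rank at least $r-1$ in $H_1(X^\chi,\Z)$, i.e.\ in $N_{\ab}$. There is no serious obstacle in this argument: it is purely a translation from spaces to groups, and the only point to be careful about is the passage from $G$ to a finite $2$-complex realizing it, which is standard and respects both the characteristic varieties in degree $1$ and the abelianization of the kernel $N$.
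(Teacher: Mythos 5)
Your argument is exactly the one the paper intends: the corollary is stated as an immediate specialization of Theorem~\ref{thm:ptors} to $q=1$, realized by taking $X$ to be a presentation $2$-complex for $G$ (as in the paper's definition of $\V^1_d(G,\k)$) and identifying $H_1(X^{\chi},\Z)$ with $N_{\ab}$. Both hypotheses and both conclusions translate verbatim, so the proposal is correct and matches the paper's (implicit) proof.
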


We shall see a concrete instance of this phenomenon (for $r=3$ 
and $p=2$) in Example \ref{ex:1torus}.

\subsection{Torsion in abelianized kernels}
\label{subsec:tors2} 

We are now ready to state the second main result of this 
section, which proves Theorem \ref{thm:intro ptors2} from 
the Introduction.

\begin{theorem}
\label{thm:tors1}
Let $G$ be a finitely generated group, and suppose there 
is an epimorphism $\varphi\colon G\surj \Z * \Z_p$, for 
some prime $p$.  Let  $\psi=\varphi\circ \pr_1\colon G\surj \Z$, 
where $\pr_1$ denotes the projection $\Z* \Z_p \surj \Z$, and 
suppose that $\im(\widehat{\psi}_\C)\not\subseteq\V^1(G,\C)$.
Then
\begin{enumerate}
\item \label{l1}
For all sufficiently large integers $r$ not divisible by $p$, 
there exists a normal subgroup $N\triangleleft G$ 
with $G/N\cong \Z_r$ and such that $N_{\ab}$ has 
non-zero $p$-torsion.

\item \label{l2}
If, in fact, $\im(\widehat{\psi}_\C)\cap\V^1(G,\C)
\subseteq \{\bo\}$, then $N_{\ab}$ has $p$-torsion of rank at least
$r-1$, for all $r>1$ coprime to $p$.
\end{enumerate}
\end{theorem}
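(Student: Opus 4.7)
The plan is to apply Corollary~\ref{cor:torschi} to a carefully chosen family of cyclic quotients of $G$. For each integer $r$ coprime to $p$, let $\pi_r\colon\Z\surj\Z_r$ be reduction mod $r$, and set $\chi_r:=\pi_r\circ\psi\colon G\surj\Z_r$. The defining observation is that $\chi_r$ also factors through the given epimorphism $\varphi$: if $\eta_r:=\pi_r\circ\pr_1\colon\Z*\Z_p\surj\Z_r$, then $\chi_r=\eta_r\circ\varphi$. I will take $N:=\ker(\chi_r)$.

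To verify hypothesis~\eqref{cv2} of Corollary~\ref{cor:torschi}, fix any algebraically closed field $\k$ of characteristic $p$. Example~\ref{ex:V of Fn star Zp} tells us that $\V^1(\Z*\Z_p,\k)=\widehat{\Z*\Z_p}_\k$; in other words, in characteristic $p$ the entire character variety of $\Z*\Z_p$ lies in its degree-$1$ jump locus. Pulling back along $\varphi$ and invoking Lemma~\ref{lem:epi cv}, we obtain $\im(\widehat{\varphi}_\k)\subseteq\V^1(G,\k)$, and \emph{a fortiori} $\im(\widehat{\chi_r}_\k)\subseteq\V^1(G,\k)$. This step is independent of $r$.

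To verify hypothesis~\eqref{cv1}, note that $\im(\widehat{\psi}_\C)$ is a $1$-dimensional algebraic subtorus of $\widehat{G}_\C$ which, by assumption, is not contained in the Zariski-closed set $\V^1(G,\C)$. Hence $T:=\im(\widehat{\psi}_\C)\cap\V^1(G,\C)$ is a proper closed subvariety of $\C^*$ and so is finite. Since $\im(\widehat{\chi_r}_\C)$ is precisely the group of $r$-th roots of unity sitting on this torus, once $r>\abs{T}$ we have $\im(\widehat{\chi_r}_\C)\not\subseteq\V^1(G,\C)$. Corollary~\ref{cor:torschi} then yields part~\eqref{l1}. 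Under the stronger hypothesis of part~\eqref{l2}, one has $T\subseteq\set{\bo}$, so $\im(\widehat{\chi_r}_\C)\cap(\V^1(G,\C)\setminus\set{\bo})=\emptyset$ for every $r>1$; hypothesis~\eqref{cv1} is then automatic (since $\im(\widehat{\chi_r}_\C)$ contains non-identity elements), and the sharper conclusion of Corollary~\ref{cor:torschi} provides $p$-torsion of rank at least $r-1$ in $N_\ab$.

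There is no genuinely hard step here; the argument is a clean use of the dichotomy between the characteristic varieties of $\Z*\Z_p$ in characteristic $p$ (where Example~\ref{ex:V of Fn star Zp} says they fill the entire character group) and in characteristic $0$ (where the hypothesis on $\psi$ forces the cyclic subgroup $\im(\widehat{\chi_r}_\C)$ to escape $\V^1(G,\C)$ for large $r$). The main point to get right is the choice of $\chi_r$, so that it factors both through $\varphi$ (to control the char $p$ side) and through $\psi$ (to control the char $0$ side).
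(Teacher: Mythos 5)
Your proposal is correct and follows essentially the same route as the paper: the authors also take $\chi=\kappa\circ\psi$ (reduction of $\psi$ mod $r$), verify hypothesis \eqref{cv1} of Corollary~\ref{cor:torschi} by noting that $\im(\widehat{\psi}_\C)\cap\V^1(G,\C)$ is a finite subset of the one-parameter subgroup $\im(\widehat{\psi}_\C)$, and verify hypothesis \eqref{cv2} via the collapse $\widehat{\Z*\Z_p}_\k\cong\k^*$ in characteristic $p$ together with Example~\ref{ex:V of Fn star Zp} and Lemma~\ref{lem:epi cv}. The treatment of part~\eqref{l2} is likewise identical.
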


\begin{proof}
To start with, we have the commuting triangle on the left 
side of diagram \eqref{eq:tr}, with $\Gamma=\Z*\Z_p$.  Applying 
the functor $\Hom(-,\C^*)$ gives, by Pontryagin duality, 
the triangle of injective morphisms in the upper right corner 
on the right side of \eqref{eq:tr}. 
\begin{equation}
\label{eq:tr}
\xymatrixcolsep{28pt}
\xymatrixrowsep{24pt}
\begin{gathered}
\xymatrix{ G \ar@{->>}^{\varphi}[r]  \ar@{->>}_(.4){\psi}[dr] 
& \Gamma \ar@{->>}^(.4){\pr_1}[d]\\
& \Z
}
\end{gathered}
\qquad   \leadsto \qquad 
\begin{gathered}
\xymatrix{\V^1(G,\C)  \ar@{^{(}->}[r]  & \widehat{G}_\C 
& \widehat{\Gamma}_\C \ar@{_{(}->}_{\widehat{\varphi}_\C}[l] 
\\
C  \ar@{^{(}->}[r]  & T  \ar@{_{(}->}[u]  
&  \C^* \ar@{_{(}->}[u] \ar@{_{(}->}[l] 
\ar@{_{(}->}_{\widehat{\psi}_\C}[ul] 
}
\end{gathered}
\end{equation}

The morphism $\widehat{\psi}_\C\colon \C^*\to\widehat{G}_\C$ 
is a one-parameter subgroup;  let $T$ be 
its image.   By hypothesis, $\V^1(G,\C)\cap T$ is a proper, 
closed subvariety of the algebraic torus $T$; since $\dim T=1$, 
this intersection is a finite set.
For sufficiently large $r$, then, $T$ contains a cyclic group $C$ of 
order $r$ for which $C\not\subseteq\V^1(G,\C)$;
without loss of generality, we may assume $p\nmid r$.

The situation so far is summarized on the right side of \eqref{eq:tr}. 
Applying the functor $\Hom_{\operatorname{alg}}(-,\C^*)$
gives the triangle of surjective homomorphisms on the right 
side of diagram \eqref{eq:tr2}, with $\bar\chi=\kappa\circ \bar\psi$.
Note that $\psi=\bar\psi\circ \ab$, where 
$\ab\colon G\to G_{\ab}$ is the abelianization map. 
Thus, if we set $\chi:=\bar\chi\circ \ab$, we have that 
$C=\im(\widehat{\chi}_\C)$.  Therefore, condition 
\eqref{cv1} from Corollary \ref{cor:torschi} is satisfied. 
\begin{equation}
\label{eq:tr2}
\begin{gathered}
\xymatrix{ 
G \ar@{->>}^(.4){\ab}[r]  \ar@/^1.5pc/@{->>}^{\psi}[rr]
\ar@{->>}_{\chi}[drr] 
& G_{\ab} \ar@{->>}^{\bar\psi}[r] \ar@{->>}^{\bar\chi}[dr]
& \Z  \ar@{->>}^(.45){\kappa}[d]
\\
&& \Z_r
}
\end{gathered}
\end{equation}

Next, let $\k$ be a field of characteristic $p$.  Applying the functor 
$\Hom(-,\k^*)$ yields the bottom triangle from diagram \eqref{eq:tr3}; 
clearly,  $\im(\widehat{\chi}_\k)\subset \im(\widehat{\psi}_\k)$. 
We also have $\im(\widehat{\psi}_\k) = \im (\widehat{\varphi}_\k)$, 
since $\Hom(\Z_p,\k^*) = \{\bo\}$, and so 
$\widehat{\Gamma}_\k \cong \k^*$. 
On the other hand, we know from Example~\ref{ex:V of Fn} that 
$\V^1(\Gamma,\k)=\widehat{\Gamma}_\k$.
By Lemma~\ref{lem:epi cv}, the morphism 
$\widehat{\varphi}_\k\colon \widehat{\Gamma}_\k \hookrightarrow 
\widehat{G}_\k$ restricts to an embedding 
$\V^1(\Gamma,\k)\hookrightarrow\V^1(G,\k)$.  

Putting things together, we infer that the map $\chi_\k$ factors 
through the dotted arrow, i.e., $\im(\widehat{\chi}_\k)\subset  
\V^1(G,\k)$; thus, condition \eqref{cv2} from 
Corollary \ref{cor:torschi} is also satisfied. 
\begin{equation}
\label{eq:tr3}
\begin{gathered}
\xymatrix{\V^1(G,\k)  \ar@{^{(}->}[r]  & \widehat{G}_\k 
& \widehat{\Gamma}_\k \ar@{_{(}->}_{\widehat{\varphi}_\k}[l] 
\\
& \Hom(\Z_r,\k^*)  \ar@{_{(}->}^{\widehat{\chi}_\k}[u]  
\ar@{^{(}->}^(.6){\widehat{\kappa}}[r] 
\ar@{_{(}-->}[ul] 
&  \k^* \ar_{\cong}[u]  
\ar@{_{(}->}_{\widehat{\psi}_\k}[ul] 
}
\end{gathered}
\end{equation}

Finally, set $N=\ker(\chi)$.  Applying Corollary \ref{cor:torschi}, 
we conclude that $N_{\ab}=H_1(N,\Z)$ has nonzero $p$-torsion

The last statement follows again from Corollary \ref{cor:torschi}. 
\end{proof}

\subsection{Discussion}
\label{subsec:ex disc} 
We conclude this section with a detailed example illustrating 
Theorem \ref{thm:tors1}, and a corollary showing how the 
hypothesis of the theorem can be tested more easily under 
a formality assumption.

\begin{example}
\label{ex:1torus}
Let $G=\langle x_1, x_2 \mid x_1 x_2^2 = x_2^2 x_1 \rangle$, and  
identify $\G=(\C^*)^2$, with coordinates $t_1$ and $t_2$. 
Taking the quotient of $G$ by the normal subgroup generated by 
$x_2^2$, we obtain an epimorphism $\varphi\colon G\surj \Z*\Z_2$. 
The morphism induced by $\varphi$ on character groups sends $\widehat{\Z}$ 
to $T=\{t_2=1\}$, and $\widehat{\Z}_2\setminus \{\bo\}$ to $\rho=(1,-1)$.  
By the above proposition, $\V^1(G)$ contains the translated 
torus $\rho T=\{ t_2=-1\}$.  Direct computation with Fox derivatives 
shows that, in fact, $\V^1(G)=\{1\}\cup \rho T$.  The inclusion of 
$\V^1(G)$ in the character torus is shown schematically in 
Figure~\ref{fig:torus}.

Now let $N$ be the kernel of the homomorphism $G\surj \Z_3$ 
sending $x_1\mapsto 1$ and $x_2\mapsto 0$.   By 
Theorem \ref{thm:eko} and Corollary \ref{cor:alg mono}, 
we have that $\dim_{\k} H_1(N,\k)=4$ and $\Delta^{\k}_1(t)=(t-1)^2(t^2+t+1)$ 
if $\ch(\k)=2$, whereas $\dim_{\k} H_1(N,\k)=2$ and $\Delta^{\k}_1(t)=(t-1)^2$,  
otherwise.  (In fact, direct computation shows that $H_1(N,\Z)=\Z^2\oplus \Z_2^2$.)
\end{example}

\begin{figure}[t]
\[
\begin{tikzpicture}[baseline=(current bounding box.center),scale=0.75]   
        \draw (-1,0) to[bend left] (1,0);
        \draw (-1.2,.1) to[bend right] (1.2,.1);
        \draw[rotate=0] (0,0) ellipse (100pt and 50pt); 
        \draw[style=dotted] (0,-1.75) to[bend right] (0,-0.24);  
        \draw[style=loosely dotted] (0,-1.75) to[bend left] (0,-0.24);
        \draw (0,0.24) to[bend right] (0,1.75);  
        \draw[style=dashed] (0,0.24) to[bend left] (0,1.75);
        \node[circle,draw,inner sep=1pt,fill=black] at (0.2,-1) 
             [label=right:$\bo$] {};
        \node at (0,-1) [label=left:$T$] {};
        \node at (0,1) [label=left:$\rho T$] {};
        \node[circle,draw,inner sep=1pt,fill=blue] at (0.2,1) 
             [label=right:$\rho$] {};
        \node at (3,-1.2) [label=right:$/\C$] {};
\end{tikzpicture} \qquad\text{vs}\qquad
\begin{tikzpicture}[baseline=(current bounding box.center),scale=0.6]  
        \draw (-1,0) to[bend left] (1,0);
        \draw (-1.2,.1) to[bend right] (1.2,.1);
        \draw[rotate=0] (0,0) ellipse (100pt and 50pt); 
        \draw[style=very thick] (0,-1.75) to[bend right] (0,-0.24);  
        \draw (0,-1.75) to[bend left] (0,-0.24);
        \node[circle,draw,inner sep=1pt,fill=black] at (0.2,-1) 
             [label=right:$\bo$] {};
        \node at (0,-1) [label=left:$T_\k$] {};
        \node at (3,-1.2) [label=right:$/\k$] {};
\end{tikzpicture}
\]
\caption{Characteristic varieties with translated torus components}
\label{fig:torus} 
\end{figure}
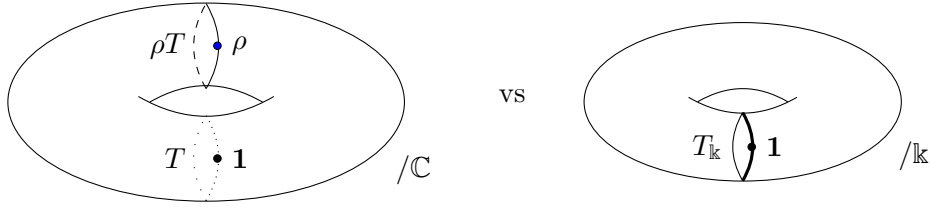

The (first) resonance variety of a finitely generated group $G$, 
denoted $\mathcal{R}^1(G,\C)$, is the set of all $a\in H^1(G,\C)$ 
for which there is an element $b\in H^1(G,\C)$ 
not proportional to $a$, such that $a\cup b=0\in H^2(G,\C)$.
The group $G$ is 
said to be $1$-formal if the complete, pro-nilpotent Lie algebra 
associated to $G$ is quadratic. As shown in \cite{DPS09}, if 
$G$ is $1$-formal, then both the tangent cone and the 
exponential tangent cone to the characteristic 
variety $\V^1(G,\C)$ coincide with $\mathcal{R}^1(G,\C)$.

\begin{corollary}
\label{cor:tors1}
Let $G$ be a $1$-formal group.  Suppose there is an epimorphism
$\varphi\colon G\surj \Z*\Z_p$ such that the kernel of the map 
$H^1(G,\Z)\to H^2(G,\Z)$ given by the cup product with the class 
$[\pr_1\circ \varphi] \in H^1(G,\Z)$ has rank $1$.
Then there are infinitely many integers $r$ (not divisible by $p$) for
which there exists an exact sequence $1\to N\to G\to \Z_r\to 1$ in which
$N_{\ab}$ has $p$-torsion.
\end{corollary}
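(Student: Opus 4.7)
The plan is to deduce the corollary from Theorem \ref{thm:tors1}. Set $\psi=\pr_1\circ\varphi\colon G\surj\Z$. Once the hypothesis $\im(\widehat{\psi}_\C)\not\subseteq\V^1(G,\C)$ is verified, part (1) of that theorem supplies, for all sufficiently large integers $r$ coprime to $p$, a normal subgroup $N\triangleleft G$ with $G/N\cong\Z_r$ and with $p$-torsion in $N_{\ab}$; this gives infinitely many such $r$ as claimed.

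To verify the hypothesis, I would first translate the cup-product assumption into a statement about the resonance variety. Set $a=[\psi]\in H^1(G,\C)$. By graded-commutativity of the cup product on $H^1$, we have $a\cup a=0$, so the line $\C\cdot a$ is contained in the kernel of cup-product with $a$; the assumption that this kernel has rank $1$ forces the kernel to equal $\C\cdot a$. Hence no class in $H^1(G,\C)$ can be both linearly independent from $a$ and annihilated by $a$ under cup product, which by definition of $\mathcal{R}^1(G,\C)$ means $a\notin\mathcal{R}^1(G,\C)$.

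The remaining step is to pass back to the characteristic variety, and this is where $1$-formality is used essentially. By the result of \cite{DPS09} recalled just before the corollary, the exponential tangent cone to $\V^1(G,\C)$ at $\bo$ coincides with $\mathcal{R}^1(G,\C)$. Since $\im(\widehat{\psi}_\C)\subset\widehat{G}_\C$ is precisely the one-parameter subgroup $\{\exp(ta):t\in\C\}$ generated by $a$, a containment $\im(\widehat{\psi}_\C)\subseteq\V^1(G,\C)$ would place $a$ in the exponential tangent cone, hence in $\mathcal{R}^1(G,\C)$, contradicting the previous step. Thus the hypothesis of Theorem \ref{thm:tors1} is met and the corollary follows. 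The conceptual heart of the argument is this last implication: without $1$-formality, the exponential tangent cone to $\V^1(G,\C)$ can sit strictly inside $\mathcal{R}^1(G,\C)$, so a purely cohomological hypothesis of the sort in the corollary would not suffice to control characters.
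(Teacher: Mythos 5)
Your proposal is correct and follows essentially the same route as the paper: translate the rank-one kernel condition into $[\psi]\otimes_\Z\C\notin\mathcal{R}^1(G,\C)$, use $1$-formality to identify the exponential tangent cone of $\V^1(G,\C)$ with $\mathcal{R}^1(G,\C)$, conclude that $\im(\widehat{\psi}_\C)\not\subseteq\V^1(G,\C)$, and invoke Theorem~\ref{thm:tors1}. Your explicit justification that $a\cup a=0$ forces the kernel to be exactly the line $\C\cdot a$ is a detail the paper leaves implicit, but the argument is the same.
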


\begin{proof}
Set $\psi=\pr_1\circ \varphi$.  The rank condition means that 
$[\psi]\otimes_\Z\C$ is not in $\mathcal{R}^1(G,\C)$, which 
implies (by $1$-formality) that 
the one-parameter subgroup $\im(\widehat{\psi})$
is not in the exponential tangent cone to $\V^1(G,\C)$.  Thus, 
$\im(\widehat{\psi})\not\subseteq\V^1(G,\C)$, and the conclusion 
follows from Theorem~\ref{thm:tors1}.
\end{proof}

\section{Quasi-projective varieties and orbifold fibrations} 
\label{sect:orbi}

We now turn to a geometric setting: that of smooth, quasi-projective varieties. 
The special nature of their characteristic varieties allows us to detect torsion 
in the homology of finite cyclic covers by means of ``small" orbifold fibrations. 

\subsection{Orbifold fundamental groups}
\label{subsec:orbi}

Let $\Sigma_{g,r}$ be a Riemann surface 
of genus $g\ge 0$, with $r\ge 0$ points removed. 
Fix points $q_1,\dots ,q_s$ on the surface, and 
assign to these points integer weights 
$\mu_1,\dots , \mu_s$ with $\mu_i\ge 2$. The resulting object, 
$\Sigma=(\Sigma_{g,r},\mu)$, is called a ($2$-dimensional) 
{\em orbifold}.  

Set $\abs{\mu}=s$, and write $n=2g+r-1$. We say 
that an orbifold $\Sigma$ as above is 
{\em hyperbolic}\/ if its orbifold Euler characteristic, 
\begin{equation}
\label{eq:chiorb}
\chi^{\orb}(\Sigma)= 
2-2g-r-\sum_{i=1}^{s} (1-1/\mu_i),
\end{equation}
is negative.  Furthermore, we say that a hyperbolic 
orbifold $\Sigma$ is {\em small}\/ if either 
$\Sigma=S^1\times S^1$ and $\abs{\mu} \geq2$, or 
$\Sigma=\C^*$ and $\abs{\mu} \geq1$; 
otherwise, we say the orbifold is {\em large}.

The orbifold fundamental 
group $\Gamma=\pi_1^{\orb}(\Sigma_{g,r}, \mu)$ 
is given by
\begin{equation}
\begin{aligned}
\label{eq:orbipi1}
\Gamma&=
\left\langle 
\begin{array}{c}
x_1,\dots, x_g, y_1,\dots , y_g, \\[2pt]
z_1, \dots ,z_s 
\end{array}\left|
\begin{array}{c}
[x_1,y_1]\cdots [x_g,y_g] z_1\cdots z_s =1, \\[2pt]
z_1^{\mu_1}=\cdots =z_t^{\mu_s}=1
\end{array}
\right.
\right\rangle 
&\text{if $r=0$,}\\
\Gamma&=
F_{n} * \Z_{\mu_1} * \cdots * \Z_{\mu_s}
&\text{if $r>0$.}
\end{aligned}
\end{equation}

Note that  $\Gamma_{\ab} = \Z^{2g} \oplus A$, 
where $A=\Z_{\mu_1}\oplus \cdots \oplus \Z_{\mu_s}/(1,\dots ,1)$ 
in the first case, and $\Gamma_{\ab}=\Z^{n}\oplus A$, 
where $A=\Z_{\mu_1}\oplus \cdots \oplus \Z_{\mu_s}$ 
in the second case.  In either case, 
identify $\widehat{\Gamma}=\widehat{\Gamma}^{\circ}  
\times \widehat{A}$.  A Fox calculus computation as in 
\cite{ACM13} shows that
\begin{equation}
\label{eq:v1piorb}
\V^1(\Gamma,\C)=\begin{cases}
\widehat{\Gamma} & 
\text{if $\Sigma$ is a large hyperbolic orbifold},
\\[2pt]
\big( \widehat{\Gamma}\setminus 
\widehat{\Gamma}^{\circ}\big) \cup  \{1\} 
& \text{if $\Sigma$ is a small hyperbolic orbifold},
\end{cases}
\end{equation}
and is a finite set of torsion characters, otherwise.

\subsection{Orbifold pencils}
\label{subsec:orbipen}

Now let $X$ be a smooth, connected quasi-projective 
variety, with fundamental group $G=\pi_1(X,x_0)$.  Let 
$(\Sigma_{g,r},\mu)$ be a $2$-dimensional 
orbifold with marked points $(q_1,\mu_1),\dots, (q_s,\mu_s)$. 
A surjective, holomorphic map $f\colon X\to \Sigma_{g,r}$ 
is called an {\em orbifold fibration}\/ (or, a pencil) if the 
generic fiber is connected, the multiplicity of the fiber 
over each point $q_i$ equals $\mu_i$, and $f$ admits an 
extension $\bar{f}\colon \overline{X}\to \Sigma_{g}$ to 
compactifications which is also a surjective, holomorphic 
map with connected generic fibers. 

We say that a hyperbolic orbifold fibration 
$f\colon X\to (\Sigma_{g,r},\mu)$ 
is either large or small according to its base.  
If $b_1(\overline{X})=0$, then an easy argument with 
mixed Hodge structures shows that $g=0$.

Any pencil $f$ as above 
induces an epimorphism $f_{\sharp} \colon G \surj \Gamma$, 
where $\Gamma=\pi_1^{\orb}(\Sigma_{g,r}, \mu)$, 
and thus a monomorphism 
$\widehat{f_{\sharp}}\colon \widehat{\Gamma}\inj \widehat{G}$. 
By Lemma \ref{lem:epi cv} and the computation from 
\eqref{eq:v1piorb}, we see that $\widehat{f_{\sharp}}$ sends 
$\V^1(\Gamma,\C)$ to a union of (possibly torsion-translated) 
subtori inside $\V^1(X,\C)$.  In fact, a more precise result  
holds.

\begin{theorem}[\cite{Ar, Di07, ACM13}]
\label{thm:arapura bis}
Let $X$ be a smooth, quasi-projective variety.  Then
\[
\V^1(X,\C)=\bigcup_{\text{$f$ large}} \im(\widehat{f_{\sharp}}) \cup 
\bigcup_{\text{$f$ small}} \Big( \im(\widehat{f_{\sharp}}) \setminus 
\im(\widehat{f_{\sharp}})^{\circ} \Big) \cup Z, 
\]
where $Z$ is a finite set of torsion characters. 
\end{theorem}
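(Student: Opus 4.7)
The plan is to establish the two containments separately. For the easy inclusion (right-to-left), I would observe that every orbifold pencil $f\colon X \to (\Sigma_{g,r},\mu)$ induces, by definition, an epimorphism $f_\sharp\colon G \surj \Gamma$ to the orbifold fundamental group $\Gamma=\pi_1^{\orb}(\Sigma_{g,r},\mu)$. By Lemma~\ref{lem:epi cv}, the dual map $\widehat{f_\sharp}\colon \widehat{\Gamma} \inj \widehat{G}$ restricts to an embedding $\V^1(\Gamma,\C)\inj \V^1(X,\C)$. Substituting the description of $\V^1(\Gamma,\C)$ from~\eqref{eq:v1piorb} then produces exactly the non-$Z$ part of the right-hand side: for large hyperbolic orbifolds the whole image $\im(\widehat{f_\sharp})$ lies in $\V^1(X,\C)$, while for small orbifolds only the components $\im(\widehat{f_\sharp}) \setminus \im(\widehat{f_\sharp})^\circ$ contribute.

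For the reverse inclusion, the starting point is Arapura's theorem~\cite{Ar}: each irreducible component $W$ of $\V^1(X,\C)$ is a torsion-translated algebraic subtorus of $\widehat{G}$, and every positive-dimensional component passing through $\bo$ arises as $W = \im(\widehat{f_\sharp})$ for some surjective holomorphic map $f\colon X \to \Sigma$ with connected generic fibers onto a hyperbolic smooth curve $\Sigma$. The zero-dimensional components, together with any isolated translated torsion points that cannot be absorbed into a positive-dimensional subtorus, are finite in number by Noetherianity of $\widehat{G}$ and collectively define the set $Z$.

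The principal obstacle, resolved in the refinements~\cite{Di07, ACM13}, is to account for positive-dimensional \emph{translated} components. Given such a $W = \rho \cdot T$ not through $\bo$, the plan is to take the holomorphic map $f\colon X \to \Sigma_{g,r}$ produced by Arapura and read off multiplicities from the multiple fibers of the extension $\bar{f}\colon \overline{X} \to \Sigma_g$: assigning the weight $\mu_i$ to the $i$-th multiple fiber promotes $\Sigma_{g,r}$ to an orbifold $(\Sigma_{g,r},\mu)$, and a direct Fox calculus / mixed Hodge structure computation shows that the torsion translation $\rho$ is precisely the character of $\Gamma_{\ab}$ recording these multiplicities. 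The resulting orbifold pencil then satisfies $W \subseteq \im(\widehat{f_\sharp})$, and the large/small dichotomy in the statement follows by comparing with the identity-component behavior of $\V^1(\Gamma,\C)$ in~\eqref{eq:v1piorb}.

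The hardest step will be this last orbifold promotion: one has to verify that every multiple fiber of $\bar{f}$ is captured by the multiplicity vector $\mu$, that the resulting orbifold is hyperbolic (so the cases where $\V^1(\Gamma,\C)$ is merely a finite set of torsion characters get absorbed into $Z$ rather than producing spurious subtori), and that the character $\rho$ lifts through $\widehat{f_\sharp}$ as claimed. Once this is done, the split into the ``large'' and ``small'' unions on the right-hand side is immediate from~\eqref{eq:v1piorb}, and the finite remainder is $Z$.
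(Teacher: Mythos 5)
This theorem is not proved in the paper at all: it is imported verbatim from the cited sources \cite{Ar, Di07, ACM13}, so there is no in-paper argument to compare yours against. Judged on its own terms, your proposal is an accurate roadmap of how the result is established in those references, but only the easy half of it is actually a proof.

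The right-to-left inclusion you give is complete and is essentially the remark the paper itself makes in \S\ref{subsec:orbipen}: $f_\sharp$ is surjective, Lemma~\ref{lem:epi cv} embeds $\V^1(\Gamma,\C)$ into $\V^1(X,\C)$ along $\widehat{f_\sharp}$, and substituting \eqref{eq:v1piorb} yields exactly the large/small terms (with the non-hyperbolic pencils contributing only finitely many torsion characters, absorbed into $Z$). The left-to-right inclusion, however, is where all the mathematical content lives, and your sketch treats its two pillars as black boxes. First, the assertion that every positive-dimensional component of $\V^1(X,\C)$ through $\bo$ equals $\im(\widehat{f_\sharp})$ for some pencil onto a hyperbolic curve is Arapura's theorem itself; its proof requires a generalized Castelnuovo--de Franchis argument together with Hodge-theoretic and Deligne-extension machinery that you do not reproduce. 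Second, the ``orbifold promotion'' for a translated component $\rho\cdot T$ --- showing that $\rho$ has finite order, that it is realized by a character of $\Gamma_{\ab}$ recording the multiple fibers of $\bar f$, and conversely that every torsion translate appearing in $\V^1(X,\C)$ is accounted for by some multiple fiber --- is precisely the contribution of \cite{Di07, ACM13}, and you correctly flag it as the hardest step without carrying it out. There is also a point your sketch elides: finiteness of $Z$ does not follow from Noetherianity alone (that only gives finitely many components); one must separately show that the isolated components consist of torsion characters, which again is a theorem of \cite{ACM13}. So if your goal was to organize the citation, the outline is sound; if it was to supply a proof, the entire forward inclusion remains to be done.
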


In particular, every positive-dimensional component of $\V^1(X,\C)$ 
is of the form $\rho\cdot T$, where $T$ is an algebraic subtorus 
in $H^1(X,\C^{*})$, and $\rho$ is of finite order (modulo $T$).  
The component arises from an orbifold fibration with base 
$\Sigma_{g,r}$, so that $T$ has dimension $n:=b_1(\Sigma_{g,r})$.  
We will call $T$ the {\em direction torus}\/ associated with the
orbifold fibration: see Figure~\ref{fig:torus}.

In the case where $\rho T=T$, we must have $n=2g\ge 4$ 
or $n=2g+r-1\ge 2$, according to whether the number of 
punctures $r=0$ or not.   On the other hand, if $\rho\notin T$,  
then the direction torus $T$ itself is also a component of 
$\V^1(X,\C)$ unless $n=2$ (if $r=0$) or $n=1$ (if $r>0$).

\begin{figure}
\end{figure}

\begin{theorem}[\cite{DPS08,ACM13}]
\label{thm:dirtor}
For any smooth, quasi-projective variety $X$, 
the direction tori associated with two 
orbifold fibrations of $\V^1(X,\C)$ either coincide or 
intersect only in finitely many points.
\end{theorem}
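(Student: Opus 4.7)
The plan is to argue by a Stein factorization of the product map. Let $f_i\colon X\to\Sigma_i$ be orbifold pencils with direction tori $T_i$ (for $i=1,2$), and factor $(f_1,f_2)\colon X\to \Sigma_1\times\Sigma_2$ as $h\circ g$, where $g\colon X\to Y$ has connected fibres and $h\colon Y\to\Sigma_1\times\Sigma_2$ is finite onto its (irreducible) image $Z$. The argument branches on whether $\dim Z=1$ or $\dim Z=2$, and the theorem's dichotomy is recovered by showing that $T_1=T_2$ in the former case, while $T_1\cap T_2$ is finite in the latter.

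In the curve case $\dim Z=1$, the two coordinate projections restrict to dominant, hence finite, maps $Z\to\Sigma_i$. Endowing $Y$ with the orbifold structure pulled back from each $\Sigma_i$ realizes $g$ as an orbifold pencil through which both $f_i$ factor. A finite map of hyperbolic orbifold surfaces induces an isogeny between the identity components of the character groups, so
\begin{equation*}
T_i=\im\bigl(\widehat{f_{i\sharp}}\bigr)^{\!\circ}=\im\bigl(\widehat{g_\sharp}\bigr)^{\!\circ}
\end{equation*}
for both $i=1,2$, forcing $T_1=T_2$.

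In the surface case $\dim Z=2$, the product map is dominant, and I would show that the infinitesimal pullback subspaces $V_i:=f_i^*\bigl(H^1(\Sigma_i,\C)\bigr)\subseteq H^1(X,\C)$ satisfy $V_1\cap V_2=0$; exponentiating, this yields that $T_1\cap T_2$ is $0$-dimensional, hence finite. To see $V_1\cap V_2=0$, any nonzero $v=f_1^*\omega_1=f_2^*\omega_2$ in the intersection would supply a cohomology class pulled back from two distinct pencils, and a Castelnuovo--de Franchis argument adapted to the orbifold/quasi-projective setting would force $\omega_1$ and $\omega_2$ to descend from a common curve, contradicting $\dim Z=2$. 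The main obstacle lies precisely here: making the Castelnuovo--de Franchis step rigorous on a smooth quasi-projective variety with orbifold weights, rather than for holomorphic $1$-forms on a compact Kähler manifold. This is the content of Arapura's structure theorem for $\V^1$ (Theorem \ref{thm:arapura bis}) and its refinements in \cite{DPS08, ACM13}, which show that every positive-dimensional component of $\V^1(X,\C)$ comes from an essentially unique \emph{maximal} orbifold pencil; my proof closes by invoking this maximality, so that two direction tori sharing a positive-dimensional intersection must arise from pencils dominated by a common third pencil, which by the curve case reduces them to a single $T$.
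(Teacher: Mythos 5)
The paper itself gives no proof of this statement --- it is imported wholesale from \cite{DPS08} and \cite{ACM13} --- so your argument can only be judged on its own terms. Your overall architecture (factor $(f_1,f_2)\colon X\to\Sigma_1\times\Sigma_2$, split into the cases where the image is a curve or a surface, and reduce finiteness of $T_1\cap T_2$ to the vanishing of $f_1^*H^1(\Sigma_1,\C)\cap f_2^*H^1(\Sigma_2,\C)$) is indeed the route the cited sources take. However, in the curve case your key assertion is false as stated: a finite map $q\colon Y\to\Sigma$ of hyperbolic curves of degree $d>1$ does \emph{not} induce an isogeny on identity components of character groups, because Riemann--Hurwitz forces $b_1(Y)>b_1(\Sigma)$, so $\im(\widehat{q_\sharp})^\circ$ is a \emph{proper} subtorus of $\widehat{\pi_1(Y)}^{\,\circ}$, and your displayed equalities fail. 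What rescues the conclusion is a different observation: the generic fiber of $f_i=(p_i\circ h)\circ g$ is a disjoint union of $\deg(p_i\circ h)$ connected fibers of $g$, so connectedness of the fibers of $f_i$ forces each map $Y\to\Sigma_i$ to have degree one, hence to be an isomorphism, whence $T_1=T_2$. (You should also justify the existence of the Stein factorization, since $(f_1,f_2)$ need not be proper; this is where the required extensions $\bar f_i$ to compactifications enter.)

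The more serious gap is the surface case, which carries the entire content of the theorem. You correctly identify the Castelnuovo--de Franchis step as the obstacle, but you then ``close'' the argument by invoking the maximality and uniqueness statements of \cite{DPS08} and \cite{ACM13} --- which are exactly the results this theorem is quoting. As a standalone proof this is circular. To make it honest you would need the quasi-projective isotropic subspace theorem, applied not to the single class $v=f_1^*\omega_1=f_2^*\omega_2$ (one class is not enough for Castelnuovo--de Franchis) but to the $2$-planes spanned by $v$ and $f_i^*\eta$, using $v\wedge f_i^*\eta=0$, together with the uniqueness of the pencil attached to a maximal isotropic subspace containing a given isotropic $2$-plane; and one must track the mixed Hodge-theoretic weights to run this on a non-compact $X$. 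Note also that when $b_1(\Sigma_i)=1$ (the small pencils over $\C^*$) there is no isotropic $2$-plane available inside $V_i$ at all --- though in that case the statement is vacuous, since two distinct one-dimensional subtori of the character torus automatically intersect in a finite set. These cases need to be separated explicitly rather than absorbed into a single appeal to the literature.
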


\subsection{Torsion in cyclic covers}
\label{subsec:tors flat}

We are now ready to state and prove the main result of this 
section, which establishes Theorem \ref{thm:intro pencil} 
from the Introduction. 

\begin{theorem}
\label{thm:flat pencil}
Let $X$ be a smooth, quasi-projective variety.  Suppose there 
is a small orbifold fibration $f\colon X\to (\Sigma,(\mu_1,\ldots,\mu_s))$ 
and a prime $p$ dividing $\gcd\set{\mu_1,\ldots, \mu_s}$.  
Then, for all sufficiently large integers $r>1$ not divisible by $p$, 
there exists a regular, $r$-fold cyclic cover $Y\to X$ 
such that $H_1(Y,\Z)$ has $p$-torsion.  

If, moreover, $\V^1(X,\C)$ 
contains no zero-dimensional, nonidentity components, then $H_1(Y,\Z)$ has
$p$-torsion of rank at least $r-1$ for all $r>1$ not divisible by $p$.
\end{theorem}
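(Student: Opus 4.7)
The plan is to derive Theorem~\ref{thm:flat pencil} from Theorem~\ref{thm:tors1} by exhibiting an epimorphism $\varphi \colon G \surj \Z * \Z_p$, where $G = \pi_1(X)$, such that the composite $\psi := \pr_1 \circ \varphi$ has image $S := \im(\widehat{\psi}_\C)$ avoiding all but finitely many points of $\V^1(G,\C)$. The divisibility $p \mid \gcd(\mu_i)$ supplies the needed $\Z_p$-quotient of $\Gamma := \pi_1^{\orb}(\Sigma,\mu)$, while Theorems~\ref{thm:arapura bis} and~\ref{thm:dirtor} control the geometry of $\V^1(G,\C)$ near the direction torus $T_f$ of $f$.

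First, I would let $f_\sharp \colon G \surj \Gamma$ be the epimorphism induced by $f$ and use the presentation~\eqref{eq:orbipi1} to construct an auxiliary $q \colon \Gamma \surj \Z * \Z_p$. In the $\Sigma = \C^*$ case, $\Gamma = \Z * \Z_{\mu_1} * \cdots * \Z_{\mu_s}$ is a free product, and $q$ is the identity on the $\Z$-factor composed with the surjections $\Z_{\mu_i} \surj \Z_p$ (available since $p \mid \mu_i$) folded together via the co-diagonal $\Z_p^{*s} \surj \Z_p$. In the $\Sigma = S^1 \times S^1$ case with $|\mu| \geq 2$, I set $x \mapsto 1$, $y \mapsto a$, $z_1 \mapsto b$, $z_2 \mapsto b^{-1}$, and the remaining $z_i \mapsto 1$, where $a$ and $b$ generate $\Z$ and $\Z_p$ respectively; the surface relation $[x,y] \prod z_i = 1$ and each torsion relation $z_i^{\mu_i} = 1$ hold in the target because $p \mid \mu_i$. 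Setting $\varphi := q \circ f_\sharp$, the image $S$ is a 1-dimensional subtorus of $T_f$.

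Next, I would verify $S \not\subseteq \V^1(G,\C)$. By Theorem~\ref{thm:arapura bis}, every positive-dimensional component of $\V^1(G,\C)$ is a translate $\rho T'$ of the direction torus of an orbifold fibration, with $\rho = 1$ for large fibrations; since $S$ contains the identity, a containment $S \subseteq \rho T'$ forces $T'$ large and $S \subseteq T'$. Theorem~\ref{thm:dirtor} then requires either $T' = T_f$ or $T' \cap T_f$ finite, the latter being incompatible with positive-dimensional $S \subseteq T'$. In the $\C^*$-case, $T_f$ has dimension $1$ while every large direction torus has dimension at least $2$ (since $\C^*$ is the only hyperbolic orbifold base with $b_1 = 1$, and it is always small), so $T' \ne T_f$ is automatic. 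In the torus case, $T_f$ has dimension $2$, and one must exclude the scenario that $T_f$ itself arises as a large direction torus; the only candidate large bases producing 2-dimensional direction tori are $\Sigma_{0,3}$ and $\Sigma_{1,0}$ with $|\mu''| = 1$, and both are ruled out by a uniqueness principle for the orbifold structure carried by a given direction torus, together with the hypothesis $|\mu| \geq 2$ on $\Sigma_{1,0}$.

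With $S \not\subseteq \V^1(G,\C)$ established, part~(1) of Theorem~\ref{thm:tors1} supplies, for all sufficiently large $r$ coprime to $p$, a regular $r$-fold cyclic cover $Y \to X$ with non-zero $p$-torsion in $H_1(Y,\Z)$, proving the first assertion. For the second assertion, the hypothesis that $\V^1(X,\C)$ contains no zero-dimensional non-identity components allows one to strengthen the intersection estimate to $S \cap \V^1(G,\C) \subseteq \{\bo\}$, and part~(2) of Theorem~\ref{thm:tors1} then yields the asserted $p$-torsion of rank at least $r-1$ for every $r > 1$ coprime to $p$. The principal obstacle is Step~2 in the torus case: securing that $T_f$ is not simultaneously a large direction torus of $X$, which is where the uniqueness of orbifold structures per direction torus is crucial.
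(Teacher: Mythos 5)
Your proposal follows the paper's proof essentially verbatim: compose $f_\sharp$ with a surjection $\pi_1^{\orb}(\Sigma,\mu)\surj \Z*\Z_p$ (available because $p$ divides every $\mu_i$), show via Theorems \ref{thm:arapura bis} and \ref{thm:dirtor} that the resulting one-parameter subgroup meets each positive-dimensional component of $\V^1(X,\C)$ in finitely many points, and then invoke Theorem \ref{thm:tors1}. The only difference is one of detail: you spell out the surjection onto $\Z*\Z_p$ in both the $\C^*$ and torus cases, and you explicitly address the degenerate possibility that the direction torus of the small fibration coincides with that of a large one (a case the paper's own proof passes over when it asserts $T\cap T'$ is finite), so the route is the same.
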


\begin{proof}
The orbifold fibration $f$ induces an epimorphism 
$f_\sharp\colon\pi_1(X)\to\pi_1^{\orb}(\Sigma,\mu)$. 
Since $p\mid \gcd\set{\mu_1,\ldots, \mu_s}$, 
the target group surjects onto $\Z * \Z_p$.  
Thus, we obtain an epimorphism $\varphi\colon\pi_1(X)\surj 
\Z * \Z_p$.  

Let $\psi=\varphi\circ \pr_1\colon \pi_1(X)\surj \Z$.  
Clearly, $T:=\im(\widehat{\psi})$ is the direction torus associated 
to $f$.  Let $\rho T'$ be a positive-dimensional component of 
$\V^1(X,\C)$. By Theorem \ref{thm:dirtor}, $T\cap T'$ is finite; thus, 
$T\cap \rho T'$ is also finite.  Hence, $T\not\subseteq\V^1(X,\C)$, 
and so the first conclusion follows by Theorem~\ref{thm:tors1}.

Note that zero-dimensional components of $\V^1(X,\C)$ 
are necessarily torsion points, by \cite{ACM13}.  By 
Theorem~\ref{thm:arapura bis}, then, if all nonidentity
components of $\V^1(X,\C)$ are positive-dimensional, the 
intersection $T\cap\V^1(X,\C)$ is either empty or $\set{\bo}$.  
The second conclusion follows by Theorem~\ref{thm:tors1}\eqref{l2}.
\end{proof}

\begin{example}
\label{ex:ccm}
Following \cite{CCM}, 
let $C_1$ be a (smooth, complex) curve of genus $2$
with an elliptic involution $\sigma_1$ and $C_2$ a curve of
genus $3$ with a free involution $\sigma_2$.
The group $\Z_2$ acts freely on the product $C_1 \times C_2$
via the involution $\sigma_1\times \sigma_2$; let
$M$ be the quotient surface.  The projection 
$C_1 \times C_2\to C_1$  descends to an orbifold
fibration $f_1 \colon M\to \Sigma_1=C_1/\sigma_1$ with two 
multiple fibers, each of multiplicity $2$, while the projection 
$C_1 \times C_2\to C_2$ descends to a holomorphic fibration 
$f_2\colon M\to \Sigma_2=C_2/\sigma_2$.

It is readily seen that $H=H_1(M,\Z)$ is isomorphic to $\Z^6$;
fix a basis $e_1,\dots , e_6$ for this group.  A computation 
detailed in \cite{Su13a} shows that the variety $\V^1(M,\C)\subset (\C^*)^6$
has precisely two components, arising from the orbifold fibrations 
described above. The first component is the subtorus 
$T_1=\{(1,1,t_3,t_4,t_5,t_6)\in(\C^*)^6\}$, which actually 
lies in $\V^1_4(M,\C)$, while the second component 
is the translated subtorus 
$\rho T_2=\{(t_1,t_2,-1,1,1,1)\in(\C^*)^6\}$, which 
lies in $\V^1_2(M,\C)$.

Let $r>1$ be an odd integer, and choose any epimorphism
$\chi\colon H_1(M,\Z)\surj\Z_r$ for which $\chi(e_i)=0$ for $3\leq i\leq 6$.
Then the cyclic group $\im(\widehat{\chi})$ is contained in $T_2$, yet it
intersects $\V^1(M,\C)$ only at the trivial character $\bo$.  Hence,  
by Theorem~\ref{thm:tors1}, there is $2$-torsion in $H_1(M^\chi,\Z)$.
For instance, if $r=3$ and $\chi=(1,1,0,0,0,0)$, then 
$H_1(M^\chi,\Z)=\Z^6\oplus \Z_2^4$, and the characteristic 
polynomial of the algebraic monodromy is 
$\Delta^{\k}_1(t)=(t-1)^6(t^2+t+1)^2$ if $\ch(\k)=2$, and 
$\Delta^{\k}_1(t)=(t-1)^6$, otherwise.
\end{example}

\section{Hyperplane arrangements, multinets, and deletions} 
\label{sect:arr}

In this section, we further specialize to the case of complex 
hyperplane arrangements.  The first characteristic 
variety of the complement of such an arrangement 
is determined in large part by a combinatorial construction, 
known as a ``multinet". Suitable deletions, then, produce 
small pencils, which will be put to use later on.

\subsection{The complement of an arrangement}
\label{subsec:hyp arr}

Let $\A$ be a finite set of hyperplanes 
in some finite-dimensional complex vector space $\C^{\ell}$.  
Most of the time, we will assume the arrangement is central, 
i.e., all hyperplanes pass through the origin.  In this case, 
the product 
\begin{equation}
\label{eq:defpoly}
Q(\A)=\prod_{H\in \A} f_H,
\end{equation}
is a defining polynomial for the arrangement, where 
$f_H\colon \C^{\ell} \to \C$ are linear forms for which $\ker(f_H)=H$. 

At times, we may wish to allow multiplicities on the hyperplanes: 
if $m\in\Z^\A$ is a lattice vector with $m_H\geq1$ 
for each $H\in\A$, the pair $(\A,m)$ is called a {\em multiarrangement}.
A defining polynomial for the multiarrangement is the 
product  
\begin{equation}
\label{eq:qam}
Q(\A,m)=\prod_{H\in \A} f_H^{m_H}.
\end{equation}

Let $M(\A)=\C^{\ell}\setminus \bigcup_{H\in \A} H$ be the 
complement of the arrangement.  Let $U(\A)=\P M(\A)$ denote the 
image of $M(\A)$ in $\P^{\ell-1}$: it is well-known that
$M(\A)\cong U(\A)\times \C^*$.  

Fix an ordering of the hyperplanes, and let 
$n=\abs{\A}$ be the cardinality of $\A$. 
A standard construction linearly embeds $M(\A)$ in a complex 
algebraic torus: let $\iota\colon \C^{\ell}\to\C^n$ be the map given by
\begin{equation}
\label{eq:torusembedding}
\iota(x)=(f_H(x))_{H\in\A}.
\end{equation}
Clearly, this map restricts to an inclusion $\iota\colon M(\A)\inj (\C^*)^n$.  
Since $\iota$ is equivariant with respect to the diagonal action of $\C^*$ 
on both source and target, it descends to a map 
$\overline{\iota}\colon U(\A)\inj (\C^*)^n/\C^*$.
We note the following for future reference. 

\begin{lemma}
\label{lem:H1ofA}
The map $\iota\colon M(\A)\to(\C^*)^n$ is a classifying map 
for the universal abelian cover of $M(\A)$: that is, 
$\iota_{\#}\colon\pi_1(M(\A))\to \Z^n$ is the natural 
projection $\pi_1(M(\A))\twoheadrightarrow \pi_1(M(\A))_{\ab}$.
Similarly, $\overline{\iota}_{\#}\colon\pi_1(U(\A))\to \Z^n/(1,\ldots,1)$ 
is the abelianization map for the fundamental group of the 
projective complement.
\end{lemma}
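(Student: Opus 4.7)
The plan is to use the fact that $(\C^*)^n$ is an Eilenberg--MacLane space $K(\Z^n,1)$, so that the homotopy class of $\iota$ is completely determined by the induced homomorphism $\iota_\#\colon \pi_1(M(\A))\to \Z^n$. Since the universal abelian cover of $M(\A)$ is classified by the abelianization map $\pi_1(M(\A))\twoheadrightarrow H_1(M(\A),\Z)$, it suffices to show that $\iota_\#$ factors through abelianization and that the induced map on $H_1$ is an isomorphism. For this, I would recall the classical fact (due to Brieskorn, Arnold, and Orlik--Solomon) that $H_1(M(\A),\Z)$ is free abelian of rank $n$, with basis given by the meridian classes $\{x_H\}_{H\in\A}$; moreover, these meridians generate $\pi_1(M(\A))$.

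The key computation is then $\iota_\#(x_H) = e_H$, the standard basis vector. For a small meridian loop $\gamma_H$ around $H$ based near a generic point of $H$, one examines each coordinate separately. In the $H'$-coordinate, the composition $f_{H'}\circ\gamma_H$ is a loop in $\C^*$; when $H'\ne H$, the linear form $f_{H'}$ is nonzero in a neighborhood of the basepoint, so this loop can be taken to have zero winding number, while for $H'=H$, the loop $\gamma_H$ encircles the divisor $\{f_H=0\}$ exactly once by definition, contributing winding number $1$. Hence $\iota_\#$ sends a generating set of $\pi_1(M(\A))$ to the standard $\Z$-basis of $\Z^n$, and therefore factors through $\pi_1(M(\A))_{\ab} = H_1(M(\A),\Z)$ as the canonical identification with $\Z^n$.

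For the projective statement, one uses the splitting $M(\A)\cong U(\A)\times \C^*$ and observes that the diagonal $\C^*$-action on $(\C^*)^n$ is compatible with $\iota$: scaling $x$ by $z\in\C^*$ multiplies each $f_H(x)$ by $z$, so the generator of the $\C^*$ fiber maps under $\iota_\#$ to $(1,\dots,1)\in\Z^n$. Passing to the quotient by the diagonal subgroup induces the short exact sequence $0\to \Z\cdot(1,\dots,1)\to \Z^n\to \Z^n/(1,\dots,1)\to 0$ on $\pi_1$, from which the claim about $\overline{\iota}_\#$ follows.

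The main (minor) obstacle is the justification that meridians both generate $\pi_1(M(\A))$ and form a $\Z$-basis of $H_1(M(\A),\Z)$; however, both facts are completely standard and can simply be cited from the arrangement literature. Everything else is a bookkeeping exercise in winding numbers and a standard application of the classifying property of $K(\Z^n,1)$.
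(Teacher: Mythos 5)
Your proposal is correct. The paper states this lemma without proof, treating it as a standard consequence of the facts you cite (meridians generate $\pi_1(M(\A))$, and the winding number of $f_{H'}$ along a meridian of $H$ is $\delta_{HH'}$), so your argument is exactly the justification the authors leave implicit; note only that once $\iota_\#$ is shown to carry the generating meridians to the standard basis of $\Z^n$, the isomorphism $H_1(M(\A),\Z)\cong\Z^n$ follows formally and need not be quoted as a separate input.
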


For each $H\in \A$, let $x_H$ denote the based 
homotopy class of a compatibly oriented meridian curve 
about the hyperplane $H$; as is well-known, the group 
$\pi_1(M(\A))$ is generated by these elements. 
By a slight abuse of notation, we will denote the 
image of $x_H$ in $H_1(M(\A),\Z)$ by the same symbol. 
Similarly, we will denote by $\overline{x}_H$ the image of 
$x_H$ in both $\pi_1(U(\A))$ and its abelianization. 

It follows from Lemma~\ref{lem:H1ofA} that $H_1(M(\A),\Z)\cong\Z^n$, 
with basis $\set{x_H\colon H\in\A}$, and 
$H_1(U(\A),\Z)\cong\Z^n \slash \big(\sum_{K\in\A} x_K\big)$. 

\subsection{Characteristic varieties of arrangements}
\label{subsec:cv arr}

Using the standard meridian basis of $H_1(M(\A),\Z)=\Z^n$, 
we may identify the character group $\Hom(\pi_1(M(\A)),\k^*)$ 
with the torus $(\k^*)^n$.  From the product 
formula \eqref{eq:cvprod}, we see that $\V^q_1(M(\A),\k)$ 
is isomorphic to $\V^q_1(U(\A),\k) \cup  \V^{q-1}_1(U(\A),\k)$.  
Thus, we may view the characteristic varieties $\V^q_1(M(\A),\k)$ 
as subvarieties of the algebraic torus 
$\set{t\in (\k^*)^n\mid t_1\cdots t_n=1}\cong (\k^*)^{n-1}$. 

We will be only interested here in the degree $1$ characteristic 
variety, $\V^1(\A,\k):=\V^1_1(M(\A),\k)=\V^1_1(U(\A),\k)$.   
To compute these varieties, we may assume, without loss 
of generality,  that $\A$ is a central arrangement in $\C^3$.

If $\B\subset \A$ is a sub-arrangement, the inclusion 
$U(\A) \inj U(\B)$ induces an epimorphism 
$\pi_1(U(\A)) \surj \pi_1(U(\B))$.  
By Lemma \ref{lem:epi cv}, the resulting monomorphism 
between character groups restricts to an embedding 
$\V^1(\B,\k) \inj \V^1(\A,\k)$.  Components of $\V^1(\A,\k)$ 
which are not supported on any proper sub-arrangement 
are said to be essential.  

The hyperplane complement $M(\A)$ is a smooth, quasi-projective 
variety.  By work of Arapura \cite{Ar}, the complex characteristic varieties   
$\V^q_1(M(\A),\C)$ are unions of unitary translates of algebraic 
subtori in $(\C^{*})^n$. By Theorem \ref{thm:arapura bis}, the 
variety $\V^1(\A)=\V^1(\A,\C)$ is, in fact, a union 
of torsion-translated subtori.  

\subsection{Multinets}
\label{subsec:multinets}
 
All the irreducible components of $\V^1(\A)$ passing through the 
origin can be described in terms of the existence of a multinet on 
the hyperplanes of $\A$, a notion we now recall. 

\begin{definition}
\label{def:multinet}
A {\em multinet}\/ on an arrangement $\A$ is a partition of $\A$ 
into $k\geq3$ subsets $\A_1,\ldots,\A_k$, together with an assignment 
of multiplicities, $m\colon \A\to \N$, and a subset 
$\XX\subseteq L_2(\A)$, called the base locus, 
such that:
\begin{enumerate}
\item $\sum_{H\in\A_i} m_H=d$, independent of $i$.
\item For each $H\in\A_i$ and $H'\in \A_j$ with $i\neq j$, the flat 
$H\cap H'$ belongs to $\XX$.
\item For each $X\in\XX$, the sum $n_X:=\sum_{H\in\A_i\colon H\leq X} m_H$
is independent of $i$.
\item For each $1\leq i\leq k$ and $H,H'\in\A_i$, there is a sequence
$H=H_0,H_1,\ldots, H_r=H'$ such that $H_{j-1}\cap H_j\not\in\XX$ for
$1\leq j\leq r$.
\end{enumerate}
\end{definition}

We say that a multinet  as above has $k$ classes and weight $d$, 
abbreviated as a $(k,d)$-multinet.  Without loss of generality, we 
may assume that $\gcd\set{m_H\colon H\in\A}=1$.  If all multiplicities 
$m_H$ are equal to $1$, we say the multinet is {\em reduced}. 
If, furthermore, every flat in $\XX$  is contained in precisely one 
hyperplane from each class, then the multinet is called 
a {\em $(k,d)$-net}. 

There is a somewhat trivial construction of multinets: set 
$\XX=\set{X}$ for any flat $X\in L_2(\A)$ that lies on at 
least three lines, and form a net on the subarrangement of 
hyperplanes containing $X$ by assigning each hyperplane 
multiplicity $1$ and putting one hyperplane in each class.
However, the existence of multinets for which $\abs{\XX}>1$ is much 
more subtle.  We distill some of the known results, as 
follows.

\begin{theorem}[\cite{PY}, \cite{Yu09}] 
\label{thm:pyy}
Let $(\A_1,\dots,\A_k)$ be a multinet on $\A$, with multiplicity 
vector $m$ and base locus $\XX$. 
\begin{enumerate}
\item \label{multi1}
If $\abs{\XX}>1$, then $k=3$ or $4$.    
\item \label{multi2}
If there is an $H\in \A$ such that $m_H>1$, then $k=3$. 
\end{enumerate}
\end{theorem}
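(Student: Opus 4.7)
The plan is to encode the multinet data as a pencil of degree-$d$ plane curves and extract both bounds on $k$ from the geometry of that pencil. Given a $(k,d)$-multinet $(\A_1,\dots,\A_k)$ with multiplicity vector $m$ and base locus $\XX$, form the polynomials $F_i = \prod_{H\in\A_i} f_H^{m_H}\in H^0(\P^2,\O(d))$. The partition axiom forces any two of the $F_i$ to vanish exactly on the points of $\XX$, with local multiplicity $n_X$ at each $X\in\XX$ by axiom (3), so the $F_i$ are pairwise linearly independent but share the same scheme-theoretic base locus. Hence all of them lie in a single pencil $\{\lambda F_1+\mu F_2\}\subset \P(H^0(\P^2,\O(d)))$, and the $k$ classes become $k$ completely reducible fibers of the associated rational map $\P^2\to\P^1$, which after blowing up $\XX$ becomes an honest morphism $\widetilde{\P^2}\to\P^1$.

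For part (1), I would run an Euler-characteristic accounting for this morphism. Each completely reducible fiber contributes a definite deficiency to $\chi(\widetilde{\P^2}) - k\,\chi(F_{\mathrm{gen}})$, measured in terms of the local data $(m_H, n_X)$. A Hirzebruch-type inequality for line arrangements, applied to the underlying simple arrangement supporting the multinet, bounds this deficiency from below, and the assumption $|\XX|>1$ is precisely what prevents the pencil from degenerating to a trivial situation on a Hirzebruch surface. Summing over the $k$ completely reducible fibers and comparing to $\chi(\widetilde{\P^2}) = 3 + |\XX|$ forces $k\le 4$. This is essentially the Pereira--Yuzvinsky argument in \cite{PY}.

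For part (2), suppose $m_H>1$ for some $H\in\A_i$, so that the member $F_i$ of the pencil is non-reduced, carrying $f_H$ with multiplicity $m_H\ge 2$. The strategy is to show that a $(4,d)$-multinet cannot have a non-reduced member. One exploits the extra linear dependence produced by having a fourth completely reducible fiber: the four polynomials $F_1,F_2,F_3,F_4$ span only a two-dimensional subspace of $H^0(\P^2,\O(d))$, and the resulting relation, combined with the local analysis at a generic point of the non-reduced component, forces $m_H=1$ after all. Together with part (1), which rules out $k\ge 5$, this forces $k=3$.

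The main obstacle, and what makes both proofs genuinely non-trivial in the literature, is sharpening the Euler-characteristic bookkeeping so that the geometric bound is tight: the Hirzebruch inequality is a deep input, and tracking how the multiplicities $m_H$ interact with the resolution of $\XX$ requires care. Since both assertions are established in \cite{PY, Yu09}, in practice I would invoke those results directly, presenting the pencil picture above only as an organizing framework that explains why the dichotomies $k\in\{3,4\}$ and $k=3$ are the natural ones.
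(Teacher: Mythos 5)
The paper offers no proof of this statement at all---it is imported directly from \cite{PY} and \cite{Yu09}, as the bracketed attribution in the theorem header and the preceding phrase ``we distill some of the known results'' make explicit---and since your proposal also ends by invoking those same references, you are taking exactly the paper's route. One caveat on your surrounding heuristic, in case you ever try to make it self-contained: the passage from the multinet axioms to an honest pencil containing all $k$ polynomials $F_i$ is itself a nontrivial theorem of Falk--Yuzvinsky \cite{FY} (it needs the connectivity axiom and a Max Noether/complete-intersection argument, not just ``same base locus''), and the actual proof of $k\le 4$ in \cite{PY} is foliation-theoretic rather than a Hirzebruch-type inequality, so the sketch should be read as motivation rather than as a proof outline.
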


Although infinite families of multinets are 
known with $k=3$, only one is known to exist for $k=4$: 
the $(4,3)$-net on the Hessian arrangement. 

\subsection{Pencils, multinets, and translated tori}
\label{subsec:arr pencils}
Recall we are assuming $\A$ is a central arrangement in $\C^3$. 
As shown by Falk, Pereira, and Yuzvinsky in \cite{FY, PY}, 
every $(k,d)$-multinet on $(\A,m)$ determines a large orbifold fibration 
$f_m\colon M(\A)\to\Sigma_{0,k}$, which in turn produces a 
$(k-1)$-dimensional, connected component of $\V^1(\A)$ 
containing $\bo$.  

If $\abs{\XX}=1$, then the trivial net described above gives 
rise to a {\em local}\/ component of $\V^1(\A)$.   
Otherwise, let $Q_i=\prod_{H\in\A_i}f_H^{m_H}$,
for $1\leq i\leq k$, so that the polynomial $Q(\A,m)$ factors as 
$Q_1\cdots Q_k$.  Define a rational map $f_m\colon \C^3\to\P^1$ by 
\begin{equation}
\label{eq:FYpencil}
f_m(x)=[Q_1(x):Q_2(x)].
\end{equation}

Clearly the restriction to the  complement, 
$f_m\colon M(\A)\to\P^1$, is a regular map, and the 
points $[0\co 1]$ and $[1\co 0]$ are not in its image.
Along the same lines, if $[a:b]\not\in f_m(M(\A))$, then the polynomial
$aQ_2-bQ_1$ does not vanish on $M(\A)$, so it is a product of linear forms.
More is true: it turns out that there exist $k$ distinct pairs
$\set{(a_i,b_i)\in\C^2\colon 1\leq i\leq k}$
for which $Q_i=a_iQ_2-b_iQ_1$, for each $i$, and 
\begin{equation}
\label{eq:imfm}
\im(f_m)=\P^1\setminus \set{[a_i\co b_i]\colon 1\leq i\leq k}
\cong \Sigma_{0,k}.
\end{equation}
(We also see that the choice of order of classes affects $f_m$ only
by a linear automorphism of $\P^1$.)

In general, though, the characteristic variety $\V^1(\A)$ has  
irreducible components not passing through the the origin, as we 
shall see in the next subsection.

Following \cite{Su13a}, let us summarize the above discussion, 
as follows. As before, let $\A$ be an arrangement of $n$ hyperplanes.   

\begin{theorem}
\label{thm:cv arr}
Each irreducible component of $\V^1(\A)$ is a 
torsion-translated subtorus of $(\C^{*})^{n}$. 
Moreover, each positive-dimensional, non-local 
component is of the form $\rho \cdot T$, where $\rho$ is 
a torsion character, 
$T=\widehat{f_{\sharp}} \big(\widehat{\pi_1(\Sigma_{0,r})}\big)$, 
for some orbifold fibration $f\colon M(\A)\to (\Sigma_{0,r},\mu)$, 
and  either 
\begin{enumerate}
\item \label{tt1} $r=2$, and $f$ has at least one multiple fiber, or

\item \label{tt23}  $r=3$ or $4$,  and there exists a multiplicity
vector $m$ and multinet with $r$ classes on $(\A,m)$ for which $f=f_m$.
If $r=4$, moreover, $m_H=1$ for each $H$.
\end{enumerate}
\end{theorem}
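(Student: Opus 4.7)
The plan is to stitch together three inputs already present in the excerpt: Arapura's structural description of $\V^1$ (Theorem \ref{thm:arapura bis}), the Falk--Yuzvinsky translation between multi-fiber orbifold pencils on $M(\A)$ and multinets, as recalled around \eqref{eq:FYpencil}, and the Pereira--Yuzvinsky bound on the number of classes of a multinet (Theorem \ref{thm:pyy}). The first assertion is immediate from Theorem \ref{thm:arapura bis}: every positive-dimensional component of $\V^1(\A)$ is either $\im(\widehat{f_\sharp})$ or $\im(\widehat{f_\sharp})\setminus\im(\widehat{f_\sharp})^\circ$ for some orbifold fibration $f$, which in each case is a (possibly torsion-translated) subtorus, and the remaining zero-dimensional locus $Z$ from that theorem consists of torsion characters.

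For the second assertion, I would fix a non-local positive-dimensional component $C$ of $\V^1(\A)$ and apply Theorem \ref{thm:arapura bis} to write $C=\rho\cdot T$, where $\rho$ is a torsion character and $T$ is the direction torus of some orbifold fibration $f\colon M(\A)\to(\Sigma_{g,r},\mu)$. Because the natural projective closure of $M(\A)$ has vanishing first Betti number, the mixed-Hodge-structure argument flagged in the paragraph just before Theorem \ref{thm:arapura bis} forces $g=0$, so the base is $\Sigma_{0,r}$. I would then split on whether the orbifold is small or large. In the small case, the $g=0$ reduction together with the definition of small leaves only $\Sigma=\C^*$ with $|\mu|\geq 1$, giving conclusion \eqref{tt1}. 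In the large case one has $r\geq 3$; here I would invoke the Falk--Yuzvinsky correspondence to promote $f$ to an $(r,d)$-multinet $(\A_1,\dots,\A_r)$ on some multiplicity vector $m$, with $f=f_m$ as in \eqref{eq:FYpencil}. Since $C$ is non-local, the base locus $\XX$ of this multinet satisfies $|\XX|>1$, so Theorem \ref{thm:pyy}\eqref{multi1} restricts $r$ to $3$ or $4$; the contrapositive of Theorem \ref{thm:pyy}\eqref{multi2} then shows that in the $r=4$ case every $m_H$ is $1$.

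The main obstacle, if one had to establish this from scratch, would be the Falk--Yuzvinsky dictionary between orbifold pencils to punctured spheres and multinets on multiarrangements, together with the Pereira--Yuzvinsky upper bound $r\leq 4$ in the presence of a nontrivial base locus. Both are imported from the literature in the excerpt, so the proof here is really a matter of organizing the deductions: Arapura's theorem supplies the small/large dichotomy, the $g=0$ reduction rigidifies the arrangement case, and the multinet bounds finish the classification.
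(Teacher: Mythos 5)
Your proof is correct and follows essentially the same route the paper intends: the paper states this theorem as a summary of the preceding discussion (citing [Su13a]) and the implicit argument is exactly your assembly of Arapura's structure theorem, the $g=0$ reduction via $b_1$ of the compactification, the Falk--Yuzvinsky pencil/multinet dictionary, and the Pereira--Yuzvinsky bounds from Theorem~\ref{thm:pyy}. The only steps you leave at citation level (the converse direction of the Falk--Yuzvinsky correspondence, and the fact that a translated component with direction torus of dimension $\ge 2$ forces the torus itself to be a component) are precisely the ones the paper also outsources to the literature.
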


\subsection{Deletions and pencils}
\label{subsec:deletion}

Fix now a hyperplane $H\in \A$.  The arrangement  
$\A'=\A\setminus \{H\}$ is then called the {\em deletion}\/ 
of $\A$ with respect to $H$.  Before proceeding, let us 
introduce a variation on Definition~\ref{def:multinet}. 

\begin{definition}
\label{def:pointed multinet}
A {\em pointed multinet}\/  on $\A$ is a multinet structure 
$((\A_1,\dots, \A_k), m, \XX)$, together with a distinguished 
hyperplane $H\in \A$ for which $m_H>1$, and $m_H \mid n_X$ 
for each flat $X\ge H$ in the base locus.
\end{definition}

\begin{proposition}
\label{prop:del}
Suppose $\A$ admits a pointed multinet, and $\A'$ is 
obtained from $\A$ by deleting the distinguished hyperplane 
$H$.  Then $U(\A')$ supports a small pencil, and 
$\V^1(\A')$ has a component which is a $1$-dimensional 
subtorus, translated by a character of order $m_H$.
\end{proposition}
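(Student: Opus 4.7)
My plan is to restrict the Falk--Yuzvinsky pencil of $\A$ to the larger complement $U(\A')$, show the result is a small orbifold pencil with base $(\Sigma_{0,2},(m_H))$, and read off the translated component of $\V^1(\A')$ from the characteristic varieties of $\Z*\Z_{m_H}$.

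Since $m_H>1$, Theorem~\ref{thm:pyy}\eqref{multi2} forces $k=3$; without loss of generality $H\in\A_1$. Set $Q_i=\prod_{L\in\A_i}f_L^{m_L}$, so that $f_m=[Q_1\co Q_2]$ is the pencil $M(\A)\to\Sigma_{0,3}$ of \eqref{eq:FYpencil}, with three omitted values $[0\co 1],[1\co 0],[a_3\co b_3]$ corresponding to $Q_i=0$. The first substantive step is to observe that $f_m$ restricts to a regular map $\bar{f}\colon U(\A')\to\P^1$, since $Q_2$ is nonvanishing on $U(\A')$ (because $\A_2\subseteq\A'$) and every flat in $\XX$ is contained in a hyperplane from each class, hence removed from $U(\A')$. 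The image of $\bar{f}$ is $\Sigma:=\P^1\setminus\set{[1\co 0],[a_3\co b_3]}\cong\C^*$: the two omitted fibers lie entirely in hyperplanes of $\A'$, whereas the fiber over $[0\co 1]$ is the nonempty locus $H\cap U(\A')$.

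The next step is to compute the multiplicity of this new fiber. Factoring $Q_1=f_H^{m_H}\tilde{Q}_1$ with $\tilde{Q}_1=\prod_{L\in\A_1\setminus\set{H}}f_L^{m_L}$, the factor $\tilde{Q}_1$ does not vanish on $H\cap U(\A')$, so $Q_1/Q_2$ vanishes to order exactly $m_H$ along $H$. Endowing $\Sigma$ with orbifold weight $m_H$ at $[0\co 1]$, the map $\bar{f}\colon U(\A')\to(\Sigma,(m_H))$ is a small hyperbolic orbifold pencil (generic fiber connectedness is inherited from $f_m$ since generic fibers miss $H$). The hypothesis $m_H\mid n_X$ for $X\geq H$ in $\XX$ enters at this point, to verify that $\bar{f}$ extends holomorphically to a compactification obtained by blowing up $\P^2$ along $\XX$, consistent with orbifold weight $m_H$ at $[0\co 1]$.

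Finally, the small pencil induces a surjection $\bar{f}_\sharp\colon\pi_1(U(\A'))\twoheadrightarrow\pi_1^{\orb}(\Sigma,(m_H))=\Z*\Z_{m_H}$; dualizing and applying Lemma~\ref{lem:epi cv} embeds $\V^1(\Z*\Z_{m_H},\C)$ into $\V^1(\A')$. By \eqref{eq:v1freeprod} with $n=1$, the former is the union of $\set{\bo}$ with $m_H-1$ translated one-dimensional subtori indexed by the nontrivial $m_H$-th roots of unity; selecting a primitive such root produces a one-dimensional subtorus of $\V^1(\A')$ translated by a character of exact order $m_H$, as required. The hardest step will be the multiplicity bookkeeping at the compactification: the divisor of $Q_1/Q_2$ on the blow-up involves strict transforms of all hyperplanes in $\A_1\cup\A_2$ and exceptional divisors (whose contributions cancel by multinet axiom~(3) since $n_X^{(1)}=n_X^{(2)}$), and checking that the divisibility $m_H\mid n_X$ at flats $X\geq H$ reconciles this compactified picture with the intrinsic weight $m_H$ computed on $U(\A')$ requires a careful local coordinate analysis near each base point.
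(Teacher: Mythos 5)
Your argument is correct and follows the paper's proof essentially verbatim: extend the Falk--Yuzvinsky pencil $[Q_1\co Q_2]$ across the deleted hyperplane, observe that the image grows by exactly the point $[0\co 1]$, whose fiber $H\cap U(\A')$ acquires multiplicity $m_H$ (with the divisibility $m_H\mid n_X$ guaranteeing the orbifold fibration structure), and read off the translated one-dimensional subtorus. The only cosmetic difference is at the very end, where you invoke Lemma~\ref{lem:epi cv} together with \eqref{eq:v1freeprod} (better cited as \eqref{eq:v1piorb}, since $m_H$ need not be prime) while the paper appeals to Theorem~\ref{thm:cv arr}; both routes are fine.
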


\begin{proof}
Without loss of generality, assume that $H\in\A_1$, so that $f_H\mid Q_1$.
Consider the regular map given
by \eqref{eq:FYpencil}, $f_m\colon M(\A)\to \P^1$. 
Since $f_H$ does not divide $Q_2$, we may extend 
$f_m$ to a regular map $\overline{f}_m\colon M(\A')\to\P^1$.  
By construction, $\im(\overline{f}_m) \setminus \im(f_m)=\set{[0\co 1]}$.  
The image of $f_m$ is given by \eqref{eq:imfm}, so the image of 
$\overline{f}_m$ equals $\P^1\setminus\set{[1\co 0],[a_3\co b_3]}$.

By the pointed multinet hypothesis, the degrees of the restrictions 
of $Q_1(x)$ and $Q_2(x)$ to the hyperplane $H$
are both divisible by $m_H$.  It follows that the corestriction of 
$\overline{f}_m\colon M(\A')\to\P^1$ to its image is actually an orbifold 
fibration onto $(\Sigma_{0,2},m_H)$.  Hence, $\overline{f}_m$ is 
the desired small pencil, with the special fiber over $[0\co 1]$. 
The last statement now follows from Theorem \ref{thm:cv arr}.
\end{proof}

\begin{figure}
\begin{tikzpicture}[scale=0.85]
\draw[style=thick] (0,0) circle (2.4);
\node at (-2,0.25) {$2$};
\node at (0.25,-2) {$2$};
\node at (2.6,0.5) {$2$};
\clip (0,0) circle (2);
\draw[style=thick,densely dashed,color=blue] (-1,-2) -- (-1,2);
\draw[style=thick,densely dotted,color=red] (0,-2) -- (0,2);
\draw[style=thick,densely dashed,color=blue] (1,-2) -- (1,2);
\draw[style=thick,densely dotted,color=red] (-2,-1) -- (2,-1);
\draw[style=thick,densely dashed,color=blue] (-2,0) -- (2,0);
\draw[style=thick,densely dotted,color=red] (-2,1) -- (2,1);
\draw (-2,-2) -- (2,2);
\draw (-2,2) -- (2,-2);
\end{tikzpicture}
\caption{A $(3,4)$-multinet on the ${\rm B}_3$ arrangement}
\label{fig:B3}
\end{figure}
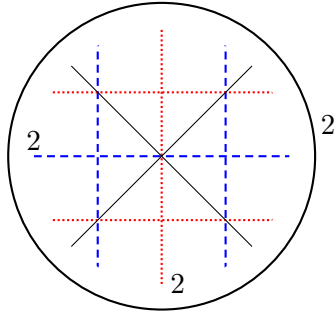

We illustrate this procedure with a concrete example.

\begin{example}
\label{ex:deleted B3}
Let $\A$ be reflection arrangement of type $\operatorname{B}_3$, 
with defining polynomial $Q=xyz(x-y)(x+y)(x-z)(x+z)(y-z)(y+z)$. 
This arrangement supports the multinet $m$ depicted in 
Figure \ref{fig:B3}.  

Let $\A'= \A \setminus \{z=0\}$ be the so-called 
deleted $\operatorname{B}_3$ arrangement, 
and order its hyperplanes in the same way  
as the factors of $Q$, above.
As noted in \cite{Su02}, the characteristic variety 
$\V^1(\A')\subset (\C^{*})^8$ contains a component 
of the form $\rho T$, where 
\[
\rho=(1,1,-1,-1,-1,-1,1,1) \quad\text{and}\quad 
T=\set{(t^2,t^{-2},1,1,t^{-1},t^{-1},t,t) \mid t\in \C^{*}}.
\]

To see how Proposition~\ref{prop:del} recovers this translated subtorus,
consider the polynomials $Q_1=z^2(x-y)(x+y)$, $Q_2=y^2(x-z)(x+z)$, 
and $Q_3=x^2(y-z)(y+z)$ defined by the classes of the multinet $m$ 
supported on $\A$, and let $f_m\colon M(\A)\to\P^1$ be the map given by
\[
f_m(x,y,z)=[z^2(x-y)(x+y):y^2(x-z)(x+z)].
\]

The point $[0\co 1]$ is not in the image of  $f_m$; however, extending 
$f_m\colon M(\A')\to\P^1$ by allowing $z=0$ extends the image of 
$f_m$ to include $[0\co 1]$.  Moreover, for $z=0$, we have 
$f_m=[0\co y^2x^2]$, so the fiber over $[0\co 1]$ has multiplicity $2$.
\end{example}

\section{Cyclic covers and Milnor fibrations of arrangements} 
\label{sect:cyclic arr}

In this section, we turn to the Milnor fibration of a multiarrangement.  
As is well-known, the Milnor fiber can be realized as a finite cyclic 
cover of the projectivized complement. Our techniques, then, 
permit us to find torsion in the first homology of the 
Milnor fiber, provided certain combinatorial constraints 
are satisfied.

\subsection{Cyclic covers of arrangement complements}
\label{subsec:hom arr}

Let $\A$ be a (nonempty) central arrangement, with complement 
$M(\A)$.  The homology groups of $M(\A)$ are well-known to be 
torsion-free.
However, combining the results above, we can now give a rather
delicate combinatorial condition which, if satisfied, insures that 
there exist infinitely many cyclic covers of $M(\A)$ whose 
first homology group has integer torsion.

\begin{theorem}
\label{thm:del multinet}
Suppose $\A$ admits a pointed multinet structure, with multiplicity 
vector $m$, and distinguished hyperplane $H$.  
Set $\A'=\A\setminus \{H\}$, and let $p$ be a prime dividing $m_H$. 
Then, for any sufficiently large integer $q$ relatively prime to $p$, 
there exists a regular, $q$-fold cyclic cover $Y\to U(\A')$ 
such that $H_1(Y,\Z)$ has $p$-torsion.
\end{theorem}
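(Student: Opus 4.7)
The plan is simply to chain together the two main preparatory theorems of the previous sections: Proposition~\ref{prop:del}, which manufactures a small orbifold fibration on the projective complement of the deletion, and Theorem~\ref{thm:flat pencil}, which produces cyclic covers with torsion in $H_1$ from any such fibration. Essentially no new ideas are needed; the whole argument is an assembly of pieces already in place.

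First I would apply Proposition~\ref{prop:del} to the pointed multinet on $\A$ with distinguished hyperplane $H$. This yields a regular map $\overline{f}_m\colon U(\A')\to\P^1$, obtained by extending the multinet pencil $f_m$ across $H$, whose image is $\P^1\setminus\{[1\co 0],\,[a_3\co b_3]\}\cong\C^*$ and which acquires a single multiple fiber over $[0\co 1]$ of multiplicity $m_H$. In other words, $U(\A')$ admits an orbifold fibration onto $(\Sigma_{0,2},m_H)$, which is a \emph{small} hyperbolic orbifold in the sense of \S\ref{subsec:orbi} (base $\C^*$ with $\abs{\mu}=1$).

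Second, since $U(\A')$ is a smooth quasi-projective variety, I would feed this data into Theorem~\ref{thm:flat pencil}, taking $X=U(\A')$, the small orbifold fibration just produced, and the prime $p$, which by hypothesis divides the unique orbifold weight $m_H$ and hence its gcd. The theorem then delivers, for every sufficiently large integer $q$ coprime to $p$, a regular $q$-fold cyclic cover $Y\to U(\A')$ whose first integral homology has nontrivial $p$-torsion, which is exactly the claim.

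The only thing to double-check is that the orbifold fibration coming out of Proposition~\ref{prop:del} genuinely satisfies the smallness hypothesis required by Theorem~\ref{thm:flat pencil}, but this is built into the construction of $\overline{f}_m$ and is immediate. The genuine work of the paper lies in establishing the two ingredients, namely the orbifold interpretation of the multinet pencil after deletion and the torsion-from-small-pencils mechanism of Section~\ref{sect:orbi}, and not in their combination here.
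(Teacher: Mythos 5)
Your proposal is correct and follows exactly the paper's own two-step argument: Proposition~\ref{prop:del} produces the small pencil on $U(\A')$ with multiple fiber of multiplicity $m_H$, and Theorem~\ref{thm:flat pencil} then yields the cyclic covers with $p$-torsion. Nothing is missing; this is the same proof.
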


\begin{proof}
By Proposition \ref{prop:del}, the complement $M(\A')$ --- and 
hence, its projectivization, $U(\A')$ --- admits a small pencil.  
The desired conclusion follows from Theorem \ref{thm:flat pencil}. 
\end{proof}

Of course, additional information about the characteristic varieties of
the arrangement $\A'$ leads to sharper conclusions, as we see in the 
next example.

\begin{example}
\label{ex:deleted B3 covers}
We continue Example~\ref{ex:deleted B3} to see that 
Theorem~\ref{thm:del multinet} is constructive.  
Here, $\A'$ is the deleted ${\rm B}_3$ arrangement, and 
$p=2$.  For any odd integer $r>1$, let 
$\chi_r\colon\pi_1(U(\A'))\to \Z_r$ be the 
homomorphism given on generators by the vector 
$(2,-2,0,0,-1,-1,1,1)\in (\Z_r)^8$.  Let $X_r$ denote 
the corresponding regular $r$-fold cover of $U(\A')$.

We know from \cite{Su02} that $\V^1(\A')$ contains no isolated 
points.  Our arguments combine to show that, for all 
odd $r$, the homomorphism $\chi=\chi_r$ satisfies the 
hypotheses of Corollary \ref{cor:torschi}.  That is, the 
cyclic group $\im(\widehat{\chi}_\C)$ intersects $\V^1(\A')$ only
at the trivial representation $\set{\bo}$, while for $\k$ of characteristic
$2$, the image of $\widehat{\chi}_\k$ is contained in the 
$1$-parameter subgroup 
\[
T_\k=\set{(t^2,t^{-2},1,1,t^{-1},t^{-1},t,t)\mid t\in\k^*}\subseteq
\V^1(\A',\k).
\] 

It follows that $\dim_\C H_1(X_r,\C)=7$, while
$\dim_\k H_1(X_r,\k)\geq 7+(r-1)$.  Hence, $H_1(X_r,\Z)$ 
has $2$-torsion of rank at least $r-1$.
By Corollary~\ref{cor:alg mono}, the characteristic polynomials of
the monodromy do depend on the field: while $\Delta_1^{\C}(t)=(t-1)^7$, 
we see $\Delta_1^{\k}(t)$ is divisible by $(t-1)^6(t^r-1)$.
\end{example}

\subsection{The Milnor fibration of a multiarrangement}
\label{subsec:mf}

We now turn our consideration to a family of finite, cyclic covers of projective
hyperplane complements that arise geometrically as polynomial level sets.

As before, let $\A$ be an arrangement in $\C^{\ell}$.  For each 
hyperplane $H\in \A$, pick a linear form $f_H$ with $\ker(f_H)=H$. 
Let $\N=\Z_{>0}$, and 
let $m\in \N^{\A}$ be a choice of multiplicities on the arrangement. 
We will assume from now on that $\gcd(m_H\colon H\in \A)=1$.
Given these data, consider the polynomial 
\begin{equation}
\label{eq:qmulti}
Q(\A,m)=\prod_{H\in \A} f_H^{m_H}.
\end{equation}

The polynomial map $Q(\A,m)\colon \C^{\ell} \to \C$ restricts 
to a map $f=f_{\A,m}\colon M(\A) \to \C^{*}$.  As shown by 
J.~Milnor \cite{Mi} in a much more general context, $f$ 
is the projection map of a smooth, locally trivial bundle, 
known as the {\em Milnor fibration}\/ of the multiarrangement. 
The typical fiber of this fibration, $F(\A,m)=f^{-1}(1)$, is called the 
{\em Milnor fiber}; it is a connected, smooth affine variety, 
having the homotopy type of an $(\ell-1)$-dimensional, finite 
CW-complex. 

Set $N=\sum_{H\in \A} m_H$, and let $\xi = \exp(2\pi i/N)$.
The geometric monodromy of the Milnor fibration is 
the map  $h\colon F(\A,m)\to F(\A,m)$ given by 
$(z_1,\dots,z_\ell) \mapsto (\xi z_1,\dots,\xi z_\ell)$. 
The map $h$ generates a cyclic group
of order $N$; this group acts freely on $F(\A,m)$, with quotient 
the projectivized complement, $U(\A)$. We thus 
have a regular, $N$-fold cover, $F(\A,m) \to U(\A)$.
As noted in \cite{CS95, CDS03}, this cover can be 
described as follows.

\begin{proposition}
\label{prop:mf} 
The Milnor fiber cover $F(\A,m) \to U(\A)$ is classified 
by the homomorphism 
$\delta=\delta_{\A,m}\colon \pi_1(U)\surj \Z_N$ 
which sends $x_H\mapsto m_H\bmod N$, 
for each $H\in \A$.
\end{proposition}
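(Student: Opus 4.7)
The plan is to realize $\delta = \delta_{\A,m}$ as the reduction modulo $N$ of the homomorphism $f_\#\colon \pi_1(M(\A)) \to \pi_1(\C^*) = \Z$ induced by the Milnor fibration $f = Q(\A,m)$. First, I would verify that $F(\A,m) \to U(\A)$ really is a regular $\Z_N$-cover: because $Q(\A,m)$ is homogeneous of degree $N$, the group $\mu_N \subset \C^*$ preserves $F(\A,m)$ via the scalar action $\zeta \cdot z = \zeta z$ and acts freely there, and any two points $z, z' \in F(\A,m)$ projecting to the same point of $U(\A)$ must satisfy $z' = \lambda z$ for some $\lambda \in \C^*$; the equation $Q(\A,m)(z') = \lambda^N Q(\A,m)(z) = 1$ then forces $\lambda \in \mu_N$. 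Identifying $\mu_N \cong \Z_N$ via the generator $\xi = \exp(2\pi i/N)$ displays the map as a regular $\Z_N$-cover.

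Next, I would compute $f_\#$ on meridians. Since its target is abelian, $f_\#$ factors through $H_1(M(\A))$; and since $f = f_H^{m_H} \cdot g_H$ locally near $H$ with $g_H$ non-vanishing, the winding number of $f$ about $0$ along a meridian of $H$ is $m_H$, so $f_\#(x_H) = m_H$. Using the product decomposition $M(\A) \cong U(\A) \times \C^*$, the kernel of $\pi_1(M(\A)) \twoheadrightarrow \pi_1(U(\A))$ is the infinite cyclic subgroup generated by the $\C^*$-fiber class $h$, and the standard identity $h = \sum_{H \in \A} x_H$ in $H_1(M(\A))$ gives $f_\#(h) = \sum_H m_H = N$. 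Consequently $f_\#$ descends modulo $N$ to a surjection $\overline{f_\#}\colon \pi_1(U(\A)) \twoheadrightarrow \Z_N$ satisfying $\overline{f_\#}(\bar{x}_H) = m_H \bmod N$.

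Finally, I would identify $\overline{f_\#}$ with $\delta$. The long exact sequence of the fibration $f$ gives $\pi_1(F(\A,m)) = \ker(f_\#)$, and the image of this subgroup in $\pi_1(U(\A))$ is precisely $\ker(\overline{f_\#})$; thus $\overline{f_\#}$ and $\delta$ are surjections onto $\Z_N$ with the same kernel and so agree up to an automorphism of $\Z_N$. This last automorphism is pinned down by an explicit monodromy calculation along a single meridian, lifting a meridian loop $\hat\gamma$ in $M(\A)$ to a path $t \mapsto \lambda(t)\hat\gamma(t) \in F(\A,m)$ with $\lambda(t)^N = 1/f(\hat\gamma(t))$ and reading off the deck transformation at $t = 1$ (which comes out to $\xi^{m_H}$, up to sign). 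The main obstacle is simply matching the sign and generator conventions used to identify $\mu_N$ with $\Z_N$; no substantive geometric difficulty remains.
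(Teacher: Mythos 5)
Your proposal is correct and follows essentially the same route as the paper's (sketched) proof: compute the homomorphism $\gamma_{\A,m}=f_\#\colon\pi_1(M(\A))\to\Z$ on meridians, observe that it sends the central fiber class $\sum_H x_H$ to $N$ so that it descends to $\delta\colon\pi_1(U(\A))\surj\Z_N$, and identify the result as the classifying map of the cover. The paper leaves the last identification as "readily seen" (deferring details to \cite{Su13b}), whereas you supply the kernel comparison and the explicit lift pinning down the automorphism of $\Z_N$; this is a welcome filling-in rather than a different argument.
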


\begin{proof} (Sketch)
The fibration $f\colon M(\A)\to \C^*$ induces a surjective  
homomorphism on fundamental groups, 
$\gamma=\gamma_{\A,m}\colon \pi_1(M(\A)) \surj \Z$, 
given on generators by $\delta(x_H)=m_H$.  
In view of Lemma \ref{lem:H1ofA}, 
the map $\delta$ induces a well-defined homomorphism 
$\delta\colon \pi_1(U(\A)) \surj \Z_N$, 
given by $\delta(x_H)=m_H \bmod N$. 
It is now readily seen that this homomorphism is 
a classifying map for the cover $F(\A,m) \to U(\A)$.  
\end{proof}

For full details on the proof of this proposition, we refer 
to \cite{Su13b}. 
Note that $F(\A,m)$ is the homotopy fiber of the infinite 
cyclic cover of $M(\A)$ determined by the homomorphism 
$\gamma_{\A,m}$.  We summarize this discussion in the 
following commuting diagram, with exact rows and columns:
\begin{equation}
\label{eq:delta cd}
\xymatrixcolsep{28pt}
\xymatrix{
& \Z \ar@{=}[r] \ar@{^{(}->}[d] & \Z \ar@{^{(}->}^(.45){\times N}[d] \\
\pi_1(F(\A,m))\ar@{^{(}->}[r] \ar@{=}[d] & \pi_1(M(\A))  \ar@{->>}[d] 
\ar@{->>}^(.6){\gamma_{\A,m}}[r] & \Z \ar@{->>}[d]\\
\pi_1(F(\A,m))\ar@{^{(}->}[r] & \pi_1(U(\A)) 
\ar@{->>}^(.65){\delta_{\A,m}}[r] & \Z_N 
}
\end{equation}

\subsection{The characteristic polynomial of the algebraic monodromy}
\label{subsec:char poly}

Under the above identification of the Milnor fiber $F(\A,m)$ as 
the regular $\Z_N$-cover $U(\A)^{\delta_{\A,m}}$, the geometric 
monodromy $h\colon F(\A,m)\to F(\A,m)$ of the Milnor fibration 
coincides with the monodromy $h=h_{\alpha}$ of the 
cover, for the choice of generator $\alpha=1\in \Z_N$.

As usual, let $\k$ be an algebraically closed field of 
characteristic not diving $N$. 
For each $q\ge 0$, consider the algebraic 
monodromy, $h_*\colon H_q(F(\A,m),\k) \to H_q(F(\A,m),\k)$, 
and let $\Delta^{\k}_q(t)=\det (t \cdot \id - h_*)$ be its 
characteristic polynomial.  It is readily seen that  
$\Delta^{\k}_q(t)$ coincides with the $q$-th Alexander polynomial 
of the infinite cyclic cover of $M(\A)$ determined by $\gamma_{\A,m}$: 
that is to say, $\Delta^{\k}_q(t)$ is the monic generator of the annihilator 
of $H_q(F(\A,m),\k)$, viewed as a $\k[\Z]$-module.

Using Theorem \ref{thm:eko} and Corollary \ref{cor:alg mono}, 
we obtain a concrete description of the homology groups of the 
Milnor fiber, and the eigenspace decomposition of the algebraic 
monodromy, solely in terms of the characteristic varieties of 
the arrangement.

As usual, fix an ordering on the hyperplanes of $\A$, 
and identify $\Hom (\pi_1(M(\A)),\k^*)$ with $(\k^*)^{\abs{\A}}$.  
For an element $\zeta\in \k^*$, set 
$\zeta^m:=(\zeta^{m_H})_{H\in \A}\in (\k^*)^{\abs{\A}}$.  

\begin{corollary}
\label{cor:hmf}
The homology groups of the Milnor fiber $F(\A,m)$, 
with coefficients in a field $\k$ of characteristic 
not dividing $N=\sum_{H\in \A} m_H$ are given by
\begin{equation}
\label{eq:eko bis}
\dim_\k H_q(F(\A,m),\k)=\sum_{d\geq 1}\abs{\V^q_d (U,\k)\cap 
\im(\widehat{\delta_{\A,m}})}.
\end{equation}

Moreover, the characteristic polynomial of the algebraic monodromy 
is given by
\begin{equation}
\label{eq:charpoly bis}
\Delta^{\k}_q(t) =  \prod_{d\ge 1} 
\prod_{\substack{\zeta\in \k^* \colon \zeta^N=1, \\
\zeta^m \in  \V^q_d(U,\k)}}  (t-\zeta).
\end{equation}
\end{corollary}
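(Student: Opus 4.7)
The plan is to recognize the corollary as a direct translation of the machinery from Section \ref{sect:cv} once the Milnor fiber is re-identified as a finite cyclic cover. The key input is Proposition \ref{prop:mf}, which classifies the covering $F(\A,m)\to U(\A)$ by the epimorphism $\delta_{\A,m}\colon\pi_1(U(\A))\twoheadrightarrow\Z_N$ sending $x_H\mapsto m_H\bmod N$, together with the observation in \S\ref{subsec:char poly} that the geometric monodromy of the Milnor fibration coincides with the deck transformation $h_\alpha$ for the canonical generator $\alpha=1\in\Z_N$. Since $\ch(\k)\nmid N$ by hypothesis, Theorem \ref{thm:eko} and Corollary \ref{cor:alg mono} both apply with $X=U(\A)$ and $\chi=\delta_{\A,m}$.

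The next step is to identify $\im(\widehat{\delta_{\A,m}}_\k)$ inside the ambient character torus. Under the standard meridian basis of $H_1(U(\A),\Z)$, the character group $\widehat{\pi_1(U(\A))}_\k$ embeds in $(\k^*)^{\abs{\A}}$ as the subtorus cut out by $\prod_H t_H=1$. A character $\iota\colon\Z_N\to\k^*$ is determined by $\zeta:=\iota(1)\in\k^*$ with $\zeta^N=1$, and its pullback along $\delta_{\A,m}$ is the character $x_H\mapsto\zeta^{m_H}$, corresponding to the tuple $\zeta^m=(\zeta^{m_H})_{H\in\A}$. This tuple does lie on the subtorus, since $\prod_H\zeta^{m_H}=\zeta^N=1$. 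Hence
\[
\im(\widehat{\delta_{\A,m}}_\k)=\{\zeta^m\in(\k^*)^{\abs{\A}} : \zeta\in\k^*,\ \zeta^N=1\}.
\]

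Substituting this description of the image into formula \eqref{eq:eko} of Theorem \ref{thm:eko} yields \eqref{eq:eko bis} at once. For the monodromy polynomial, Corollary \ref{cor:alg mono} produces
\[
\Delta_q^\k(t)=\prod_{d\ge 1}\prod_{\rho\in\im(\widehat{\delta_{\A,m}}_\k)\cap\V_d^q(U,\k)}(t-\iota_\rho(\alpha)),
\]
and for $\rho=\zeta^m$ the defining relation $\rho=\iota_\rho\circ\delta_{\A,m}$ forces $\iota_\rho(1)=\zeta$, so $\iota_\rho(\alpha)=\zeta$, reproducing \eqref{eq:charpoly bis}. There is no real obstacle here: the entire argument is bookkeeping, with the only mildly delicate point being the explicit identification of $\widehat{\Z_N}_\k$ with the group of $N$th roots of unity in $\k^*$ and the tracking of $\iota_\rho$ under pullback; everything else is an immediate appeal to the cyclic-cover formalism already established in Section \ref{sect:cv}.
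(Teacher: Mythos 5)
Your proposal is correct and is exactly the argument the paper intends: the corollary is stated there without proof as an immediate consequence of Theorem \ref{thm:eko} and Corollary \ref{cor:alg mono} applied to the cover $F(\A,m)\to U(\A)$ classified by $\delta_{\A,m}$ (Proposition \ref{prop:mf}), and your identification of $\im(\widehat{\delta_{\A,m}}_\k)$ with $\set{\zeta^m : \zeta^N=1}$ and of $\iota_\rho(\alpha)$ with $\zeta$ supplies precisely the bookkeeping the paper leaves implicit.
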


Particularly interesting is the case when all the multiplicities 
$m_H$ are equal to $1$.  In this situation, the polynomial 
$Q(\A)=Q(\A,m)$ is the usual defining polynomial for the 
arrangement, and has degree $n=\abs{\A}$.  Moreover, 
$F(\A)=F(\A,m)$ is the usual Milnor fiber of $\A$, corresponding 
to the ``diagonal" homomorphism, 
$\delta=\delta_{\A}\colon \pi_1(U(\A))\surj \Z_n$, 
which takes each meridian $x_H$ to $1$.

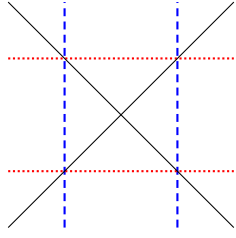
\begin{figure}
\begin{tikzpicture}[scale=0.75]
\draw[style=thick,densely dashed,color=blue] (-1,-2) -- (-1,2);
\draw[style=thick,densely dashed,color=blue] (1,-2) -- (1,2);
\draw[style=thick,densely dotted,color=red] (-2,-1) -- (2,-1);
\draw[style=thick,densely dotted,color=red] (-2,1) -- (2,1);
\draw (-2,-2) -- (2,2);
\draw (-2,2) -- (2,-2);
\end{tikzpicture}
\caption{A $(3,2)$-net on the ${\rm A}_3$ arrangement}
\label{fig:A3}
\end{figure}

\begin{example}
\label{ex:braid arr}
Let $\A$ be the braid arrangement in $\C^3$, defined by the 
polynomial $Q=(x^2-y^2)(x^2-z^2)(y^2-z^2)$.  The variety 
$\V^1(\A)\subset (\C^*)^6$ has $4$ local components 
of dimension $2$, corresponding to $4$ triple points in a generic 
section.  Additionally, the rational map 
$f\colon \P^2 \dashrightarrow \P^1$ 
given by $f([x\co y\co z])=[x^2-y^2\co x^2-z^2]$ restricts to 
a holomorphic fibration $f\colon M(\A)\to \Sigma_{0,3}$, 
where  $\Sigma_{0,3}=\P^1 \setminus \{ [1\co 0], [0\co 1], [1\co 1] \}$. 
This yields an essential, $2$-dimensional component in $\V^1(\A)$, 
corresponding to the multinet depicted in Figure \ref{fig:A3}, in 
which all multiplicities are equal to $1$.

Let $F(\A)$ be the Milnor fiber, defined by the diagonal character 
$\delta\colon \pi_1(U(\A))\to \Z_6$, and let $\iota\colon\Z_6\to\C^*$ 
be an injective homomorphism.  It is readily seen that 
$(\iota\circ\delta)^2\in \V^1_1(U(\A))$, yet 
$\iota\circ\delta\notin \V^1_1(U(\A))$.  As a consequence, 
we recover the well-known fact that 
$b_1(F(\A))=7$ and $\Delta^{\C}_1(t)=(t-1)^5(t^2+t+1)$.  

It turns out that the hypothesis of Proposition \ref{prop:no torsion} are 
satisfied for $X=U(\A)$ and $q=1$.  Thus, $\Delta^{\k}_1(t)=\Delta^{\C}_1(t)$, 
and 
\[
\Delta^{\k}_{U(\A),\delta}(\mathbf{u},x)=
u_1+(5u_1+2u_3)x+(6u_1+2u_2+6u_3+4u_6)x^2,  
\]
for all fields $\k$ of characteristic different from $2$ or $3$. 
Moreover, a Fox calculus computation shows that, in fact, 
$H_1(F(\A),\Z)=\Z^7$. 
\end{example}

\subsection{Domination by Milnor fibers}
\label{subsec:dominate}
Not every regular, cyclic cover of a projective arrangement complement
arises through the Milnor fiber construction. Nevertheless, it is not difficult
to say which ones do.

\begin{lemma}
\label{lem:F covers}
Let $(\A,m)$ be a multiarrangement, and let $U^{\chi} \to U$ 
be a regular, $\Z_N$-fold cover of $U=\P M(\A)$, where 
$N=\sum_{H\in \A} m_H$. 
Such a cover is equivalent to the Milnor fiber $F(\A,m)$ if 
and only if, for some $k$ relatively prime to $N$, the least 
strictly positive integers $\mu_H$ satisfying $k\cdot \chi(x_H)=\mu_H 
\bmod N$ have the property that $\sum_{H\in\A} \mu_H=N$, in 
which case $\mu=m$.
\end{lemma}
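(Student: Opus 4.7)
The plan is to combine Proposition \ref{prop:mf}, which identifies the Milnor fiber cover $F(\A,m)\to U(\A)$ with the cyclic cover classified by the character $\delta_{\A,m}\colon x_H\mapsto m_H\bmod N$, with the elementary fact that two regular $\Z_N$-covers of $U$ are equivalent as covers precisely when their classifying epimorphisms $\pi_1(U)\surj \Z_N$ differ by an automorphism of $\Z_N$, i.e., by multiplication by some unit $k\in (\Z/N\Z)^{\times}$. The problem thereby reduces to asking when $k\cdot \chi$ is of the form $\delta_{\A,m'}$ for some multiplicity vector $m'$ with $\sum m'_H=N$.

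For the ``if'' direction, I would start with $k$ coprime to $N$ such that the least strictly positive integers $\mu_H$ with $k\cdot \chi(x_H)\equiv \mu_H \bmod N$ satisfy $\sum_H \mu_H=N$. Since each $\mu_H\ge 1$ and these sum to $N$, the tuple $\mu$ is a valid multiplicity vector of total weight $N$. Applying Proposition \ref{prop:mf} to the multiarrangement $(\A,\mu)$ yields the classifying character $\delta_{\A,\mu}(x_H)=\mu_H\bmod N = k\cdot \chi(x_H)$, so that $\delta_{\A,\mu}=k\cdot \chi$ and $F(\A,\mu)=U^{k\chi}$ is equivalent to $U^{\chi}$.

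For the ``only if'' direction, assume $U^\chi$ is equivalent to $F(\A,m)$. By the classification above and Proposition \ref{prop:mf}, there exists $k$ coprime to $N$ with $k\cdot \chi = \delta_{\A,m}$, hence $k\cdot \chi(x_H)\equiv m_H\bmod N$ for every $H$. The constraint $\sum m_H=N$ combined with $m_H\ge 1$ forces $m_H\le N-(n-1)<N$ as soon as $n=|\A|\ge 2$; the case $n=1$ is vacuous since then $\pi_1(U)$ is trivial. Consequently each $m_H$ is already the least strictly positive representative of $k\chi(x_H)\bmod N$, i.e., $\mu_H=m_H$ for this choice of $k$, and in particular $\sum_H \mu_H=N$, which simultaneously delivers the final clause $\mu=m$.

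The only delicate point is understanding the structural meaning of the sum condition $\sum \mu_H = N$. Because of the relation $\sum_H x_H=0$ in $H_1(U(\A),\Z)$ coming from projectivization (Lemma \ref{lem:H1ofA}), every character $\chi\colon \pi_1(U)\to \Z_N$ satisfies $\sum_H \chi(x_H)\equiv 0\bmod N$, so $\sum_H \mu_H$ is automatically a positive integer multiple of $N$. Requiring this multiple to be exactly $1$ is what pins down the multiplicity data to the given total weight $N$, and is precisely what distinguishes the Milnor-fiber covers among all regular $\Z_N$-covers of $U(\A)$.
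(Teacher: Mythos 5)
Your proof is correct and follows essentially the same route as the paper's: both reduce the question to whether $k\cdot\chi$ equals $\delta_{\A,\mu}$ for some unit $k$, using Proposition~\ref{prop:mf}, the fact that $\chi$ and $k\chi$ classify equivalent covers, and the relation $\sum_{H}\chi(x_H)=0$ from Lemma~\ref{lem:H1ofA} to see that $\sum_H\mu_H$ is automatically a positive multiple of $N$. Your treatment of the ``only if'' direction (checking that each $m_H$ is already the least positive residue) is slightly more explicit than the paper's, but the argument is the same.
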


\begin{proof}
Recall that $F(\A,m)$ is the regular $\Z_N$-cover of $U$ given by 
the homomorphism $\delta\colon \pi_1(U)\surj \Z_N$, 
$x_H \mapsto m_H \bmod N$, and it has the property that 
$\sum_{H\in\A} m_H=N$.

Let $U^{\chi} \to U$ be a cover given by a homomorphism 
$\chi\colon \pi_1(U)\surj \Z_N$.  Note that any of the homomorphisms
$\set{k\cdot\chi\colon (k,N)=1}$ determine the same cover.
By Lemma~\ref{lem:H1ofA}, we have $\sum_{H\in\A}\chi(x_H)=0$ in $\Z_N$.
So, given $k$ relatively prime to $N$, if $\set{m_H}_{H\in \A}$ are positive 
integers for which $m_H=k\chi(x_H)\bmod N$, then 
$N\mid\sum_{H\in\A} m_H$.  Thus, we see that 
$\lambda:=k\cdot\chi$ determines a cover which is a 
Milnor fiber if and only if $N=\sum_{H\in\A} m_H$,  
for some choice of $k$.  This ends the proof.
\end{proof}

The Milnor fibers associated with a given arrangement dominate 
all other cyclic covers, in the following sense.

\begin{proposition}
\label{prop:Milnor dominates}
Let $\A$ be an arrangement, and let $U^\chi$ be a regular, 
$r$-fold cyclic cover of $U=\P M(\A)$, given by a surjective 
homomorphism $\chi\colon\pi_1(U)\to\Z_r$.  Then there exist 
infinitely many multiplicity vectors $m$ for which the covering 
projection $F(\A,m)\to U$ factors through $U^\chi$:
\[
\xymatrix{
F(\A,m)\ar[dr]\ar@{.>}[r] & U^\chi\ar[d]\\
&U.
}
\]
Moreover, for any prime $p$ not dividing $r$, we may choose 
$m$ so that the degree of $Q(\A,m)$ is not divisible by $p$.
\end{proposition}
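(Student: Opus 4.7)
The plan is to translate the geometric factorization $F(\A,m)\to U^\chi\to U$ into the existence of a homomorphism $\phi\colon \Z_N\to \Z_r$ with $\phi\circ\delta_{\A,m}=\chi$, where $N=\sum_H m_H$. Indeed, both $F(\A,m)\to U$ and $U^\chi\to U$ are regular covers classified respectively by $\delta_{\A,m}$ and $\chi$, so the covering projection $F(\A,m)\to U$ factors through $U^\chi$ exactly when $\ker\delta_{\A,m}\subseteq\ker\chi$, and by the universal property of the quotient, this is equivalent to the existence of such a $\phi$. The obvious candidate for $\phi$ is the natural reduction $\Z_N\twoheadrightarrow \Z_r$, which is a well-defined homomorphism precisely when $r\mid N$; in that case, the condition $\phi\circ\delta_{\A,m}=\chi$ becomes the congruence system $m_H\equiv \chi(x_H) \pmod{r}$ for all $H\in\A$.

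To build $m$, for each $H\in\A$ choose $m_H$ to be the least positive integer representing $\chi(x_H)\in \Z_r$, so $m_H\in\set{1,\dots,r}$ (taking $m_H=r$ when $\chi(x_H)=0$). By Lemma~\ref{lem:H1ofA}, the abelianization $H_1(U(\A),\Z)=\Z^n/(1,\dots,1)$ satisfies $\sum_{H\in\A}\overline{x}_H=0$, so $\sum_{H\in\A}\chi(x_H)=0$ in $\Z_r$, whence $r\mid N$ automatically. The natural projection $\Z_N\to \Z_r$ then serves as the required $\phi$, giving the desired factorization.

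To produce infinitely many such multiplicity vectors, I replace each $m_H$ by $m_H+c_H r$ for arbitrary nonnegative integers $c_H$. The congruence mod $r$ is preserved, positivity still holds, and $r\mid N$ continues to hold since the sum is shifted by a multiple of $r$. This yields an infinite family of valid $m$'s, all of which induce the same reduction $\phi$.

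For the final clause, fix a prime $p\nmid r$ and start with any $m$ from the family above. Because $r$ is invertible in $\Z/p\Z$, replacing $m_{H_0}$ by $m_{H_0}+c r$ for a single hyperplane $H_0$ shifts $N$ through every residue class modulo $p$ as $c$ ranges over the integers. In particular, we may choose $c\ge 0$ so that $p\nmid N=\deg Q(\A,m)$; infinitely many such $c$ exist, so this refinement does not diminish the family. The only technical point to watch is keeping every $m_H$ strictly positive, which is automatic since we only add nonnegative multiples of $r$ to the initial positive values.
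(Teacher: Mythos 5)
Your proposal is correct and follows essentially the same route as the paper: lift the residues $\chi(x_H)$ to positive integers, use $\sum_H \chi(x_H)=0$ in $\Z_r$ (from Lemma~\ref{lem:H1ofA}) to get $r\mid N$ so that $\delta_{\A,m}$ reduces to $\chi$, and then translate by multiples of $r$ to get infinitely many choices and to push $N$ into a residue class prime to $p$. The only difference is cosmetic: the paper also allows lifts of $k\chi$ for any unit $k\bmod r$ (which classify the same cover) and invokes the Chinese Remainder Theorem for the last clause, whereas you fix $k=1$ and adjust a single coordinate, which suffices.
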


\begin{proof}
Let $n=\abs{\A}$, and let $c\in\Z_r^n$ be given by 
$c_H=\chi(x_H)$, for each $H\in\A$. Also, 
let $S=\set{kc\colon (k,r)=1}$, let $\widetilde{S}\subseteq \Z^n$ 
denote the preimage of the set $S$ under the quotient map 
$\pr\colon \Z^n\twoheadrightarrow \Z_r^n$, and let 
$\widetilde{S}_{>0}=\widetilde{S}
\cap \N^n$ be its intersection with the positive orthant.

Consider any lattice point $m\in
\widetilde{S}_{>0}$, and let $N=\sum_{H\in\A}m_H$.  Let 
$\delta(x_H)=m_H\bmod N$ for all $H\in\A$.  By Proposition \ref{prop:mf}
the cover $U^\delta$ is equivalent to $F(\A,m)$.  Since $\pr(m)=c$,
by construction, the projection $U^\delta\to U$ factors through
$U^\chi \to U$. In fact, by Lemma~\ref{lem:F covers}, the set
$\widetilde{S}_{>0}$ parameterizes all possible multiplicities $m$ 
for which $F(\A,m)\to U$ factors through the given cover $U^\chi \to U$.  
By covering space theory, the lift $F(\A,m)\to U^{\chi}$ is also 
a regular cover.

To prove the last assertion, recall that the degree of the 
polynomial $Q(\A,m)$ equals $N:=\sum_{H\in\A}m_H$.
If, more generally, $p$ is any integer relatively prime 
to $r$ and $p>1$, it follows from the Chinese Remainder 
Theorem that the set $\set{m\in\widetilde{S}\colon p\nmid N}$
is nonempty and closed under translation by the lattice $(pr\Z)^n$.  In
particular, its intersection with the positive orthant must contain
infinitely many points.
\end{proof}

In the case when all the multiplicities are $1$, we can be more precise. 
As usual, let $\A$ be an arrangement, with defining polynomial 
$Q=\prod_{H\in\A}f_H$; let $F=Q^{-1}(1)$ be the Milnor fiber,   
and let $U=\P M(\A)$. From Proposition \ref{prop:mf}, we know 
that the cyclic cover $F\to U$ is classified by the class 
$\delta_{\A}\in H^1(U,\Z_n)$, 
where $n$ is the number of hyperplanes of $\A$.  
Proposition~\ref{prop:Milnor dominates} then 
simplifies to the following.

\begin{corollary}
\label{cor:Milnor dominates2}
Let $\chi\colon\pi_1(U)\surj\Z_r$ be a surjective homomorphism and
let $U^\chi \to U$ be the corresponding cyclic cover.  The Milnor fiber 
cover $F\to U$ factors through $U^\chi$ if and only if $r\mid n$, and
$\chi\equiv k\delta_\A \bmod r$, for some integer $k$ relatively 
prime to $r$.
\end{corollary}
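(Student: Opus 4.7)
The plan is to deduce this corollary as a direct specialization of Proposition~\ref{prop:Milnor dominates} to the trivial multiplicity vector $m=(1,\ldots,1)$, for which $F(\A,m)=F$ and $N=\sum_H m_H=n$. In the notation of that proposition's proof, set $c=(\chi(x_H))_{H\in\A}\in\Z_r^n$, and let $\widetilde{S}\subseteq\Z^n$ be the preimage of $S=\{kc:(k,r)=1\}$ under the reduction map $\Z^n\twoheadrightarrow\Z_r^n$. Proposition~\ref{prop:Milnor dominates} (more precisely, the parametrization of the factorizing multiplicities supplied by Lemma~\ref{lem:F covers}) says that $F(\A,m)\to U$ factors through $U^\chi$ if and only if $m\in \widetilde S_{>0}$. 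Specializing to $m=(1,\ldots,1)$, this condition reads: there exists $k\in\Z$ with $\gcd(k,r)=1$ such that $k\,\chi(x_H)\equiv 1\pmod r$ for every $H\in\A$. Componentwise this is exactly $\chi\equiv k^{-1}\delta_\A\bmod r$, since $\delta_\A$ is characterized by $\delta_\A(x_H)=1$ for all $H$.

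To finish, I would verify that the divisibility condition $r\mid n$ is automatic from this setup. Indeed, by Lemma~\ref{lem:H1ofA} the meridians satisfy the single relation $\sum_{H\in\A} x_H=0$ in $H_1(U,\Z)$, so summing $k\,\chi(x_H)\equiv 1\pmod r$ over all $H\in\A$ gives $0\equiv n\pmod r$; thus $r\mid n$ drops out, and the mod-$r$ reduction $\delta_\A\bmod r$ in the statement makes sense. Conversely, if $r\mid n$ and $\chi\equiv k\delta_\A\bmod r$ with $\gcd(k,r)=1$, then $\chi$ factors through $\delta_\A$ via the surjection $\Z_n\twoheadrightarrow\Z_r$ given by multiplication by $k$; so $\ker\delta_\A\subseteq\ker\chi$, and by covering space theory the $n$-fold cover $F\to U$ classified by $\delta_\A$ factors through the $r$-fold cover $U^\chi\to U$.

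There is essentially no obstacle here: the content lies entirely in Proposition~\ref{prop:Milnor dominates} (and Lemma~\ref{lem:F covers}), and the only thing to be careful about is bookkeeping with the unit $k\in(\Z/r\Z)^*$, which reflects the fact that the cover $U^\chi$ depends on $\chi$ only up to precomposition with automorphisms of $\Z_r$.
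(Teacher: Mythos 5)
Your proof is correct and follows exactly the route the paper intends: the paper gives no separate argument for this corollary, stating only that Proposition~\ref{prop:Milnor dominates} ``simplifies to'' it, and your write-up is precisely that specialization to $m=(1,\dots,1)$, with the bookkeeping (the unit $k$, the relation $\sum_{H}x_H=0$ forcing $r\mid n$, and the converse via covering space theory) correctly filled in.
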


\subsection{Torsion in the Milnor fiber of a multiarrangement}
\label{subsec:mf multiarr}
We are now ready to complete the proof of 
Theorem \ref{thm:intro multi tors} from the Introduction. 

\begin{theorem}
\label{thm:milnor multi}
Let $\A$ be hyperplane arrangement which admits a pointed 
multinet structure, with multiplicity vector $m$, and distinguished 
hyperplane $H$.  Set $\A' =\A\setminus \{H\}$, 
and let $p$ be a prime dividing $m_H$. 
Then, there is a choice of multiplicities $m'$ on $\A'$ 
such that the Milnor fiber of $(\A',m')$ has $p$-torsion 
in first homology.
\end{theorem}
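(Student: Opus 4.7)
The plan is to assemble the theorem from three ingredients already in hand: Theorem~\ref{thm:del multinet} (which produces a cyclic cover of $U(\A')$ with $p$-torsion in $H_1$), Proposition~\ref{prop:Milnor dominates} (which realizes cyclic covers of $U(\A')$ as quotients of Milnor fiber covers for suitable multiplicities), and Lemma~\ref{lem:transfer} (which transports torsion upward through covers of degree coprime to $p$).

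First, since $\A$ carries a pointed multinet with distinguished hyperplane $H$ and $p \mid m_H$, Theorem~\ref{thm:del multinet} produces, for every sufficiently large integer $q$ coprime to $p$, a regular $q$-fold cyclic cover $\pi\colon Y \to U(\A')$ with $H_1(Y,\Z)$ containing non-trivial $p$-torsion. Fix such a $q$, with corresponding classifying epimorphism $\chi\colon \pi_1(U(\A')) \surj \Z_q$.

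Next, apply Proposition~\ref{prop:Milnor dominates} to the cover $Y \to U(\A')$ and the prime $p$ (which is coprime to $q$ by construction). This yields a multiplicity vector $m'$ on $\A'$ such that the Milnor fiber cover $F(\A',m') \to U(\A')$ factors through $\pi$, and moreover such that $N' := \sum_{K \in \A'} m'_K$ is not divisible by $p$. The factorization $F(\A',m') \to Y \to U(\A')$ expresses $F(\A',m')$ as a finite cover of $Y$ of degree $N'/q$, which is coprime to $p$ since $p \nmid N'$.

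Finally, Lemma~\ref{lem:transfer} applied to the cover $F(\A',m') \to Y$ (of degree coprime to $p$) shows that the induced map
\[
H_1(F(\A',m'),\Z)_{(p)} \longrightarrow H_1(Y,\Z)_{(p)}
\]
is surjective. As the target is non-zero by Theorem~\ref{thm:del multinet}, so is the source, and hence $H_1(F(\A',m'),\Z)$ has non-trivial $p$-torsion.

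The only genuinely new work is checking that the outputs of Theorem~\ref{thm:del multinet} and Proposition~\ref{prop:Milnor dominates} can be lined up simultaneously: one must pick $q$ in the range guaranteed by the former, and then invoke the ``$p \nmid N'$'' clause of the latter to keep the transfer argument alive. No other obstacle arises; the argument is essentially a diagram chase through the covers
\[
F(\A',m') \longrightarrow Y \longrightarrow U(\A').
\qedhere
\]
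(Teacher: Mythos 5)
Your proof is correct and follows exactly the paper's argument: Theorem~\ref{thm:del multinet} produces the torsion-carrying cyclic cover $Y\to U(\A')$, Proposition~\ref{prop:Milnor dominates} dominates it by a Milnor fiber cover with $p\nmid N'$, and Lemma~\ref{lem:transfer} transports the torsion up through $F(\A',m')\to Y$. One caution on your final inference: surjectivity of $\pi_*$ on $p$-localized homology plus the target being non-zero does not by itself force torsion in the source (a torsion-free group can surject onto a torsion group); what is really needed is the split injectivity of the transfer $\tau_*$ established in the proof of Lemma~\ref{lem:transfer}, which embeds the $p$-torsion of $H_1(Y,\Z)$ into $H_1(F(\A',m'),\Z)$ --- exactly the consequence the paper records in the remark immediately following that lemma.
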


\begin{proof}
By Theorem~\ref{thm:del multinet}, there exists an integer $q$, 
not divisible by $p$, and a $q$-fold cyclic cover $Y\to U(\A')$ 
for which $H_1(Y,\Z)$ has non-trivial $p$-torsion.

By Proposition~\ref{prop:Milnor dominates}, there exists a choice 
of multiplicities $m'\in \N^n$ for which the Milnor fiber $F(\A',m')$ 
factors through $Y$, and $p$ does not divide $N' :=\sum_{H\in\A'}m'_H$.  
By Lemma~\ref{lem:transfer}, there is also $p$-torsion in $H_1(F(\A',m'),\Z)$.
\end{proof}

\begin{remark}
\label{rem:avoid2}
Since the choices of multiplicities $m$ given by 
Proposition~\ref{prop:Milnor dominates} are closed under translation by
elements of the semigroup $(pr\N)^n$, it should be clear that we may
also choose the multiplicities $m'$ in Theorem \ref{thm:milnor multi} 
to have $m'_{K}\neq2$ for each $K\in \A'$, an additional technical 
condition we will require in \S\ref{sect:polar}.
\end{remark}

We conclude this section with a few examples to
illustrate the range of applicability of Theorem \ref{thm:milnor multi}.

\begin{example}
\label{ex:mf deleted B3}
Let $\A'$ be the deletion of the $\operatorname{B}_3$ arrangement 
from Examples \ref{ex:deleted B3} and \ref{ex:deleted B3 covers}.
Proceeding as above, let $r=3$ and consider the cover given by 
$\chi_3=(2,1,0,0,2,2,1,1)\in(\Z_3)^8$.  Following the construction 
in the proof of Proposition~\ref{prop:Milnor dominates}, we may 
choose $m'=(2,1,3,3,2,2,1,1)$ as our positive integer lift of $\chi_3$.
Here $N'=15$, and $F(\A',m')$ factors through the $3$-fold cover 
indexed by $\chi_3$.  

By Theorem~\ref{thm:milnor multi}, we see that there is $2$-torsion 
in the first homology of $F(\A',m')$.  It is not hard to see that $15$ is 
the least possible order of a Milnor fiber cover factoring through 
the given $3$-fold cover.  Explicit calculation shows that, in fact, 
$H_1(F(\A',m'),\Z)=\Z^7\oplus \Z_2\oplus \Z_2$ and 
$\Delta_1^{\overline{\F_2}}(t)=(t-1)^7(t^2+t+1)$. This, then, 
recovers and generalizes a computation from \cite{CDS03}.

As noted in Remark~\ref{rem:avoid2}, we could also choose 
$m'=(8,1,3,3,5,5,1,1)$ as a lift of $\chi_3$ to avoid the multiplicity 
$2$; the group $H_1(F(\A',m'),\Z)$ stays the same. 
\end{example}

\begin{example}
\label{ex:monomial}
Now let $p$ be an odd prime, and let $\A(p,1,3)$ be the reflection
arrangement of the full monomial complex reflection group.  Let 
$Q_1=x^p(y^p-z^p)$, $Q_2=y^p(x^p-z^p)$ and $Q_3=z^p(x^p-z^p)$: 
then $Q_1Q_2Q_3$ is a defining polynomial for a multiarrangement 
$(\A(p,1,3),m)$.  As observed in \cite{FY}, this factorization gives 
$(\A(p,1,3),m)$ the structure of a multinet.  

Choosing $H=\set{x=0}$ as the distinguished hyperplane, it is 
straightforward to check that $(\A(p,1,3),m,H)$ is a pointed multinet.  
Let $\A'_p=\A(p,1,3)\setminus \set{H}$, and order the hyperplanes 
compatibly with an ordered factorization of
 $yz(x^p-y^p)(x^p-z^p)(y^p-z^p)$.  Then
$\rho T_\k\subseteq \V^1(\A'_p,\k)$, where
\begin{align*}
T_\k&=\set{(t^p,t^{-p},\overbrace{t^{-1},\ldots,t^{-1}}^p,
\overbrace{\vphantom{t^1}t,\ldots,t}^p,
\overbrace{\vphantom{t^1}1,\ldots,1}^p)\colon t\in\k^*}\text{~and}\\
\rho &=(1\,,1\,,\zeta^{-1},\ldots,\zeta^{-1},1,\ldots,1,\zeta,\ldots,\zeta),
\end{align*}
where $\zeta$ is a primitive $p$th root of unity if $p\nmid\ch(\k)$, and
$\zeta=1$ otherwise.  Let $q$ be any prime with $q>p$, and let
\begin{equation}
\label{eq:monomial m}
m'=(p,q-p,\overbrace{q-1,\ldots,q-1}^p,\overbrace{1,\ldots,1}^p,
\overbrace{\vphantom{1}q,\ldots,q}^p).
\end{equation}
Then $N'=(2p+1)q$ is relatively prime to $p$, so the Milnor fiber 
cover $F(\A'_p,m')$ factors through a $q$-fold cyclic cover, and 
$H_1(F(\A'_p,m'),\Z)$ has nonzero $p$-torsion.  This generalizes 
Example~\ref{ex:mf deleted B3}, and again recovers a computation 
from \cite{CDS03}.
\end{example}

\begin{remark}
\label{rem:more multinets}
The only other multiarrangements known to us for which some multiplicity 
is greater than $1$ form another infinite family, introduced in 
\cite[Example~4.10]{FY}.  Again, the corresponding multinets 
are pointed, and so they have deletions with torsion in the first homology 
of suitable cyclic covers.  It would be interesting to know of other examples.
\end{remark}

\section{The parallel connection operad} 
\label{sect:operad}

We now turn to parallel connections of matroids and 
arrangements. To help organize the constructions that 
follow, we note here that parallel connection can be 
viewed in terms of an operad. 

\subsection{The parallel connection of pointed matroids}
\label{subsec:par mat}
We start with a definition.  We refer to the book of 
Markl, Shnider and Stasheff~\cite{MSS02}, as well as
the survey of Chapoton~\cite{Ch07} for its combinatorial 
point of view, and we also refer to \cite[\S 7.1]{Ox92} for
a general discussion of parallel connections.

\begin{definition}
\label{def:matcat}
Let $\Mat$ denote the category whose objects are matroids and 
morphisms are simply isomorphisms.  A {\em pointed matroid}\/ 
is a pair $(E,e)$, where $E$ is a matroid and $e$ is a point in 
$E$ which is not a loop.  We will call $e$ the {\em basepoint}\/ 
of $(E,e)$.

Let $\PMat$ denote the category of pointed matroids, whose 
morphisms are basepoint-preserving isomorphisms.  Let $\PMat(n)$ 
be the set of pointed matroids with $n$ points, for $n\geq1$.  
We will denote the pointed matroid with only one point 
by $\unit=(\set{1},1)$.
\end{definition}

If $E_1$ and $E_2$ are matroids containing points $e_1$, $e_2$, 
respectively, we will denote the parallel connection identifying 
$e_1$ and $e_2$ by $E_1\PC{e_1}{e_2} E_2$.

\subsection{An operad structure}
\label{subsec:oper}
While it is natural to connect pointed matroids along their basepoints
(as in \cite{Ox92}), we shall make use of a slightly less restrictive
construction.
\begin{definition}
\label{def:pc mat}
If $(E_i,e_i)$ are pointed matroids for $i=1,2$ and $x\in E_1$, let
\[
(E_1,e_1)\plugin_x(E_2,e_2)=(E_1\PC{x}{e_2}E_2, e_1).
\]
\end{definition}

\def\dx{0.5}
\def\dy{0.71}
\def\tilt{-35.15} 
\newcommand{\drawE}[1]
{
\begin{scope}[rotate around={#1:(\dx,-\dy)}]
\node[root] (1) at (0,0) [label=180+#1:$e_1$] {};
\node[plain] (2) at (-\dx,-\dy) {};
\node[plain] (4) at (-2*\dx,-2*\dy) {};
\node[plain] (5) at (\dx,-\dy) [label=right:$x$] {};  
\node[plain] (3) at (2*\dx,-2*\dy) {};
\node[plain] (6) at (0,-4/3*\dy) {};
\draw (1) -- (2) -- (4) -- (6) -- (5);
\draw (1) -- (5) -- (3);
\draw (2) -- (6) -- (3);
\end{scope}
}
\begin{figure} 
\begin{tikzpicture}[scale=0.75,baseline=(current bounding box.center),
plain/.style={circle,draw,inner sep=1.5pt,fill=white},
root/.style={circle,draw,inner sep=1.5pt,fill=black}]
\drawE{0}
\def\shift{2.8}
\node (k0) at (\shift,-\dy) [root,label=above:$e_2$] {};
\node at (0.5*\dx+0.5*\shift,-\dy) {$\plugin_x$};
\draw (k0) -- (\shift+1,-\dy) node[plain] {} -- (\shift+2,-\dy) node[plain] {};
\end{tikzpicture}
$\qquad\mapsto\qquad$
\begin{tikzpicture}[scale=0.75,baseline=(current bounding box.center),
plain/.style={circle,draw,inner sep=1.5pt,fill=white},
root/.style={circle,draw,inner sep=1.5pt,fill=black}]
\drawE{\tilt}
\draw (5) -- (\dy+\dx,0) node[plain] {} -- (2*\dy+\dx,\dy) node[plain] {};
\draw[style=help lines,dashed] (\dx,-2.9) -- (\dx,1.2);
\draw[style=help lines,dashed] (\dx+0.4,-2.5) -- (-2,-2.5);
\draw[style=help lines,dashed] (0.1,-2.9) -- (3.0,0);
\end{tikzpicture}
\caption{The parallel connection pseudo-operad}
\label{fig:pc_example}
\end{figure}
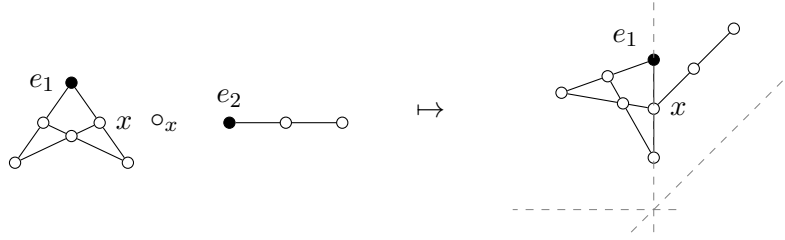

The construction is illustrated in Figure \ref{fig:pc_example}.

We have obvious natural identifications 
$\unit\plugin_1(E,e)\cong (E,e)\plugin_x\unit\cong(E,e)$ 
for any pointed matroid $(E,e)$ and $x\in E$.  Since the parallel 
connection operation is associative, we observe:

\begin{proposition}
\label{prop:operad}
The operations $\plugin_x\colon \PMat(m)\sqcup\PMat(n)\to\PMat(m+n-1)$ 
define a symmetric operad of sets with unit $\unit$.
\end{proposition}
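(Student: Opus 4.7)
The plan is to verify the three defining axioms of a symmetric operad of sets presented via partial compositions: the unit axioms, the two forms of associativity for the $\plugin_x$ operations, and equivariance under the symmetric-group actions.

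First, I would make explicit the $S_n$-action on $\PMat(n)$: since morphisms in $\PMat$ are basepoint-preserving isomorphisms, the natural action is by relabeling the ground set while fixing the distinguished basepoint. The unit axioms $\unit \plugin_1 (E,e) \cong (E,e)$ and $(E,e) \plugin_x \unit \cong (E,e)$, for any $(E,e)$ and $x\in E$, are then immediate from the definition of parallel connection: gluing a one-element matroid at any chosen point neither alters the underlying matroid nor disturbs the basepoint inherited from $(E,e)$.

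The substantive content is associativity, which occurs in two forms. Sequential associativity asserts that, for $x\in E_1$ and $y\in E_2$,
\[
\bigl((E_1,e_1)\plugin_x(E_2,e_2)\bigr)\plugin_y(E_3,e_3)
=(E_1,e_1)\plugin_x\bigl((E_2,e_2)\plugin_y(E_3,e_3)\bigr),
\]
while parallel associativity asserts that, for distinct $x_1, x_2 \in E_1$,
\[
\bigl((E_1,e_1)\plugin_{x_1}(E_2,e_2)\bigr)\plugin_{x_2}(E_3,e_3)
=\bigl((E_1,e_1)\plugin_{x_2}(E_3,e_3)\bigr)\plugin_{x_1}(E_2,e_2).
\]
Unraveling Definition \ref{def:pc mat}, both identities reduce to the classical associativity and commutativity of parallel connection for unpointed matroids, as recorded in Oxley \S 7.1. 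In every expression the basepoint is inherited from $(E_1,e_1)$, and $(E_1,e_1)$ never appears as the second argument of a $\plugin$ operation in these equations, so the basepoint survives each rearrangement unchanged.

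Finally, equivariance requires that the composition maps intertwine the symmetric group actions: if $\sigma\in S_m$ and $\tau\in S_n$, then attaching $\tau\cdot(E_2,e_2)$ to $\sigma\cdot(E_1,e_1)$ at the relabeled point $\sigma(x)$ equals the block permutation in $S_{m+n-1}$ applied to $(E_1,e_1)\plugin_x(E_2,e_2)$. This is a direct check from the definition of parallel connection as an amalgamation of ground sets along the identified point. The only real obstacle is bookkeeping: keeping a consistent convention for which element of the amalgamated ground set is the basepoint and how the identified point is labeled under iterated compositions, so that both sides of each axiom refer to the \emph{same} element when compared. Once this notational framework is fixed, each operad axiom becomes a straightforward consequence of standard matroid-theoretic facts about parallel connection.
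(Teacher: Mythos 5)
Your proposal is correct and follows essentially the same route as the paper, which offers no formal proof but simply notes the obvious unit identifications $\unit\plugin_1(E,e)\cong (E,e)\plugin_x\unit\cong(E,e)$ and appeals to the associativity of parallel connection (Oxley, \S 7.1). Your write-up merely makes explicit the sequential and parallel forms of associativity and the equivariance bookkeeping that the paper leaves to the reader.
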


\subsection{A linear map}
\label{subsec:hmap}

If $(E,e)$ is a pointed matroid, let $\Ab(E,e)$ denote the free abelian 
group on the set of points $E$.  If $(E_i,e_i)$ are pointed matroids
for $i=1,2$ and $x\in E_1$, define a homomorphism
\begin{equation}
\label{eq:hmap}
\xymatrix{\plugin_x\colon
\Ab(E_1,e_1)\oplus \Ab(E_2,e_2)\ar[r]&\Ab\big((E_1,e_1)\plugin_x(E_2,e_2)\big)
}
\end{equation}
by its action on the basis:
\begin{equation}\label{eq:hmap_formula}
e\mapsto \begin{cases} e & \text{if $e\notin \set{x,e_2}$;}\\
 \sum_{f\in E_2} f& \text{if $e=x$;}\\
 \sum_{f\in E_1} f& \text{if $e=e_2$.}
\end{cases}
\end{equation}

Let $\FF$ denote the category of free abelian groups 
on finite, pointed sets.  Direct sum gives $\FF$ the structure of a 
symmetric, monoidal category.  By inspecting the associativity 
axioms we see:

\begin{proposition}
\label{prop:operad2}
The homomorphisms $\plugin_x$ from \eqref{eq:hmap} 
define a symmetric operad on $\FF$, with unit $\id\colon\FF\to\FF$.
\end{proposition}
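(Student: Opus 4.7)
The plan is to check the three operad axioms---unit, associativity, and symmetric-group equivariance---for the linear maps $\plugin_x$ of \eqref{eq:hmap}, lifting through the functor $\Ab$ the analogous structure already established for pointed matroids in Proposition \ref{prop:operad}.

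For the unit axiom, I would note that $\Ab(\unit) = \Z$ with generator the basepoint $1$. For any $(E,e) \in \PMat$ and any $x \in E$, the pointed matroid $(E,e) \plugin_x \unit$ is canonically isomorphic to $(E,e)$ under the identification of $x$ with the unique point of $\unit$; formula \eqref{eq:hmap_formula} then specializes to the identity on $\Ab(E)$ after collapsing the summand $\Ab(\unit)$ along the basepoint identification. The symmetric case $\unit \plugin_1 (E,e)$ is identical.

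The associativity axiom is the substantive step. Given three pointed matroids $(E_i,e_i)$ and composition points $x \in E_1$ and $y \in (E_1,e_1) \plugin_x (E_2,e_2)$, I would split into cases according to whether $y \in E_1 \setminus \{x\}$ or $y \in E_2 \setminus \{e_2\}$ (the point where $x$ and $e_2$ are identified being a single point of the parallel connection and thus contributing nothing new). In each case, one checks that the two iterated applications of \eqref{eq:hmap_formula} yield the same homomorphism $\Ab(E_1) \oplus \Ab(E_2) \oplus \Ab(E_3) \to \Ab\bigl(\text{triple parallel connection}\bigr)$: on basis elements outside the set $\{x, e_2, y, e_3\}$ both compositions act as the canonical inclusion, whereas each distinguished generator maps, under either ordering, to the sum of all basis elements of a specific sub-matroid that is determined intrinsically by the combinatorics of the triple parallel connection. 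Here one invokes associativity of parallel connection at the $\PMat$ level.

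Equivariance under the symmetric group action is then immediate: the formula \eqref{eq:hmap_formula} is determined symmetrically by the two summands apart from the role of the attaching point $x$, and the corresponding equivariance for pointed matroids is already part of Proposition \ref{prop:operad}. The main (and only) obstacle is carefully cataloguing the finitely many basis elements and sub-matroid sums that appear in the associativity check; once the cases are laid out, each required identity reduces to a one-line verification.
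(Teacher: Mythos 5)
Your proposal is correct and matches the paper's approach: the paper offers no argument beyond ``by inspecting the associativity axioms we see,'' so the intended proof is exactly the direct case-by-case verification of the unit, associativity, and equivariance axioms from formula \eqref{eq:hmap_formula} that you outline. (Minor point: the case where the second attaching point $y$ is the identified point $x=e_2$ is not vacuous but is subsumed in your ``$y\in E_2$'' case by taking $y=e_2$.)
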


We record the following easy observation for later use. Let $\PAb(E,e)$ 
denote the quotient of $\Ab(E,e)$ by the sum of the points of $E$.

\begin{proposition}
\label{prop:proj operad2}
The homomorphism \eqref{eq:hmap} descends to quotients, 
yielding an isomorphism 
\begin{equation}
\label{eq:Phmap}
\xymatrix{\plugin_x\colon
\PAb(E_1,e_1)\oplus \PAb(E_2,e_2)\ar[r]& 
\PAb\big((E_1,e_1)\plugin_x(E_2,e_2)\big)
}.
\end{equation}
\end{proposition}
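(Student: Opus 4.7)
The plan is to verify that the map descends, then show the induced homomorphism is surjective, and finally invoke a rank count to conclude it is an isomorphism.

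First, I would check that $\plugin_x$ sends the subgroup $K_1 \subseteq \Ab(E_1,e_1)\oplus \Ab(E_2,e_2)$ generated by $(\sum_{e\in E_1}e,0)$ and $(0,\sum_{f\in E_2}f)$ into the subgroup $K_2 \subseteq \Ab\bigl((E_1,e_1)\plugin_x(E_2,e_2)\bigr)$ generated by $\sum_{e\in E}e$, where $E$ denotes the underlying point set of the parallel connection (with $x$ and $e_2$ identified). Using the formula \eqref{eq:hmap_formula}, we have
\[
\plugin_x\Bigl(\sum_{e\in E_1}e,\,0\Bigr)=\sum_{e\in E_1\setminus\{x\}}e+\sum_{f\in E_2}f=\sum_{e\in E}e,
\]
and symmetrically $\plugin_x(0,\sum_{f\in E_2}f)=\sum_{e\in E}e$. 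Hence $\plugin_x$ descends to a well-defined homomorphism $\bar{\plugin}_x\colon \PAb(E_1,e_1)\oplus\PAb(E_2,e_2)\to\PAb\bigl((E_1,e_1)\plugin_x(E_2,e_2)\bigr)$.

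Next I would show that the original map $\plugin_x\colon \Ab(E_1)\oplus\Ab(E_2)\to\Ab(E)$ is already surjective. For any $e\in E_1\setminus\{x\}$ or $f\in E_2\setminus\{e_2\}$, the basis element is the image of itself. The remaining basis element $x=e_2\in E$ is recovered via
\[
\plugin_x(x,0)-\sum_{f\in E_2\setminus\{e_2\}}\plugin_x(0,f)=\sum_{f\in E_2}f-\sum_{f\in E_2\setminus\{e_2\}}f=x.
\]
Therefore $\plugin_x$ is surjective on the ambient $\Ab$ groups, and composing with the quotient $\Ab(E)\twoheadrightarrow\PAb(E)$ shows that $\bar{\plugin}_x$ is surjective as well.

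Finally, both source and target of $\bar{\plugin}_x$ are free abelian groups: $\PAb(E_i,e_i)$ is the quotient of $\Ab(E_i,e_i)$ by a primitive vector, hence free of rank $|E_i|-1$, and similarly $\PAb\bigl((E_1,e_1)\plugin_x(E_2,e_2)\bigr)$ is free of rank $|E_1|+|E_2|-2$ (since parallel connection identifies exactly one pair of points). The source has the same rank $(|E_1|-1)+(|E_2|-1)=|E_1|+|E_2|-2$. A surjection between two free abelian groups of equal finite rank is necessarily an isomorphism (the short exact sequence $0\to\ker\bar{\plugin}_x\to\Z^r\to\Z^r\to 0$ splits, forcing the kernel to be trivial), completing the proof. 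The only real content is the descent check and the surjectivity calculation; the rank comparison is automatic, so I do not expect any genuine obstacle.
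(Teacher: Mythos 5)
Your proof is correct. The paper records this proposition as an ``easy observation'' and supplies no proof of its own, so there is nothing to clash with; your three steps (checking that $\plugin_x$ carries the generators $(\sum_{e\in E_1}e,0)$ and $(0,\sum_{f\in E_2}f)$ to $\sum_{e\in E}e$, recovering the identified point $x=e_2$ as $\plugin_x(x,0)-\sum_{f\in E_2\setminus\{e_2\}}\plugin_x(0,f)$ to get surjectivity, and then the rank count for free abelian groups) are all valid and fill the gap cleanly. The paper's own implicit justification is the coordinate formula \eqref{eq:Phmap_coords}: normalizing representatives by $(v_1)_x=0$ and $(v_2)_{e_2}=0$ exhibits the map as concatenation of coordinates, from which bijectivity is immediate; that route avoids the rank argument but is essentially equivalent in content to yours.
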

For an element $v\in\Ab(E,e)$, let $\set{v_f}_{f\in E}$ denote its
coordinates.  For $i=1,2$ and $\overline{v_i}\in \PAb(E_i,e_i)$, 
it is convenient to choose representatives with coordinates 
$(v_1)_x=0$ and $(v_2)_{e_2}=0$.  Then \eqref{eq:hmap_formula} 
simply says
\begin{equation}
\label{eq:Phmap_coords}
(\overline{v_1}\plugin_x\overline{v_2})_e=
\begin{cases}
(v_1)_e & \text{if $e\in E_1$;}\\
(v_2)_e & \text{if $e\in E_2$.}
\end{cases}
\end{equation}

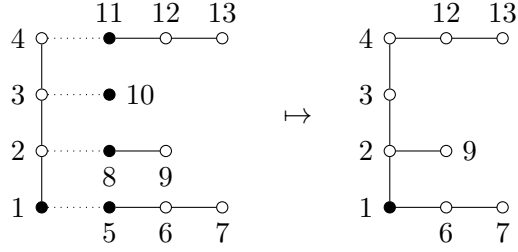
\begin{figure}
\newcommand{\drawBackbone}
{
\node[root] (h0) at (0,0) [label=left:1] {};
\node[plain] (h1) at (0,1) [label=left:2] {};
\node[plain] (h2) at (0,2) [label=left:3] {};
\node[plain] (h3) at (0,3) [label=left:4] {};
\draw (h0) -- (h1) -- (h2) -- (h3);
}
\[
\begin{tikzpicture}[scale=0.75,baseline=(current bounding box.center),
plain/.style={circle,draw,inner sep=1.5pt,fill=white},
root/.style={circle,draw,inner sep=1.5pt,fill=black}]
\def\shift{1.2}
\drawBackbone
\node (k0) at (\shift,0) [root,label=below:5] {};
\node (k1) at (\shift,1) [root,label=below:8] {};
\node (k2) at (\shift,2) [root,label=right:10] {};
\node (k3) at (\shift,3) [root,label=above:11] {};

\draw[dotted] (h0) -- (k0);
\draw[dotted] (h1) -- (k1);
\draw[dotted] (h2) -- (k2);
\draw[dotted] (h3) -- (k3);
\draw (k0) -- (\shift+1,0) node [plain,label=below:6] {} -- 
              (\shift+2,0) node [plain,label=below:7] {};
\draw (k1) -- (\shift+1,1) node [plain,label=below:9] {};
\draw (k3) -- (\shift+1,3) node [plain,label=above:12] {} -- 
              (\shift+2,3) node [plain,label=above:13] {};
\end{tikzpicture}
\quad\mapsto\quad
\begin{tikzpicture}[scale=0.75,baseline=(current bounding box.center),
plain/.style={circle,draw,inner sep=1.5pt,fill=white},
root/.style={circle,draw,inner sep=1.5pt,fill=black}]
\drawBackbone
\draw (h0) -- (1,0) node [plain,label=below:6] {} -- (2,0) 
                    node [plain,label=below:7] {} ;
\draw (h1) -- (1,1) node [plain,label=right:9] {};
\draw (h3) -- (1,3) node [plain,label=above:12] {} -- (2,3) 
                    node [plain,label=above:13] {} ;
\end{tikzpicture}
\]
\caption{Iterated parallel connections of pointed matroids}
\label{fig:pc}
\end{figure}

\begin{proposition}
\label{prop:proj operad2 dual}
The isomorphism
\begin{equation}
\label{eq:Phmap bis}
\xymatrix{\plugin_x^*\colon
\PAb\big((E_1,e_1)\plugin_x(E_2,e_2)\big)^*\ar[r] & 
\PAb(E_1,e_1)^*\oplus \PAb(E_2,e_2)^*}
\end{equation}
is given for $w\in \PAb\big((E_1,e_1)\plugin_x(E_2,e_2)\big)^*$ by
\begin{equation}\label{eq:pluginstar}
(\plugin_x^*(w))_e =\begin{cases}
w_e & \text{for $e\neq x,e_2$;}\\
w_e-\sum_{f\in E_2}w_f & \text{for $e=e_2$;}\\
\sum_{f\in E_2}w_f & \text{for $e=x$.}
\end{cases}
\end{equation}
\end{proposition}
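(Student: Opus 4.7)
The plan is a direct computation, dualizing the isomorphism \eqref{eq:Phmap} via the coordinate formula \eqref{eq:Phmap_coords}. Since $\plugin_x$ is an isomorphism of free abelian groups by Proposition~\ref{prop:proj operad2}, its dual \eqref{eq:Phmap bis} is determined by pairing against arbitrary elements of $\PAb(E_1,e_1)\oplus \PAb(E_2,e_2)$. First I would choose representatives $v_1 \in \Ab(E_1,e_1)$ and $v_2 \in \Ab(E_2,e_2)$ of $\overline{v_1}$ and $\overline{v_2}$ normalized so that $(v_1)_x = 0$ and $(v_2)_{e_2}=0$.  By \eqref{eq:Phmap_coords}, the coordinates of $\overline{v_1}\plugin_x\overline{v_2}$ are then $(v_1)_e$ for $e \in E_1$ and $(v_2)_e$ for $e \in E_2$; the value at the identified point $x = e_2$ equals $0$ under either description, so the two assignments are consistent.

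Next, viewing $w \in \PAb((E_1,e_1)\plugin_x(E_2,e_2))^*$ as a tuple $(w_e)_{e\in E_1\plugin_x E_2}$ subject to $\sum_e w_e = 0$, I would expand the pairing as
\begin{equation*}
w(\overline{v_1}\plugin_x\overline{v_2}) = \sum_{e\in E_1\setminus\{x\}} w_e (v_1)_e + \sum_{e\in E_2\setminus\{e_2\}} w_e (v_2)_e.
\end{equation*}
Since the entries $(v_1)_e$ for $e\in E_1\setminus\{x\}$ and $(v_2)_e$ for $e\in E_2\setminus\{e_2\}$ may be chosen freely, matching coefficients forces $(\plugin_x^*(w))_e = w_e$ for every $e \neq x, e_2$, confirming the first case of \eqref{eq:pluginstar}. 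The remaining two coordinates, $(\plugin_x^*(w))_{1,x}$ and $(\plugin_x^*(w))_{2,e_2}$, are then pinned down by the requirement that each component lie in $\PAb(E_i)^*$, i.e., that its total coordinate sum vanish.

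Solving those constraints gives $(\plugin_x^*(w))_{1,x} = -\sum_{e\in E_1\setminus\{x\}} w_e$ and $(\plugin_x^*(w))_{2,e_2} = -\sum_{e\in E_2\setminus\{e_2\}} w_e$. Invoking the relation $\sum_{e\in E_1\plugin_x E_2} w_e = 0$, together with the fact that the identified point carries a single coordinate $w_x = w_{e_2}$ in the codomain, the first expression rewrites as $\sum_{f\in E_2} w_f$, and adding and subtracting $w_{e_2}$ converts the second into $w_{e_2} - \sum_{f\in E_2} w_f$. These are exactly the formulas asserted in \eqref{eq:pluginstar}. The only real point of care -- and the main bookkeeping obstacle -- is keeping the identified point straight: it is a single index in the codomain $\PAb(E_1\plugin_x E_2)$, but corresponds to two distinct indices ($x \in E_1$ and $e_2 \in E_2$) in the source, and the single value $w_x = w_{e_2}$ contributes simultaneously to both $(\plugin_x^*(w))_{1,x}$ and $(\plugin_x^*(w))_{2,e_2}$.
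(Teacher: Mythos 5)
Your proof is correct and follows essentially the same route as the paper's: a direct dualization of the operadic map. The paper dualizes the basis formula \eqref{eq:hmap_formula} on the free groups $\Ab(E_i,e_i)$ and restricts to the functionals vanishing on the sum of the points, then rewrites via the identity $w_{e_2}-\sum_{f\in E_2}w_f=\sum_{f\in E_1}w_f$; you carry out the same computation from the quotient side, pairing against normalized representatives as in \eqref{eq:Phmap_coords} and pinning down the two remaining coordinates by the vanishing-sum constraints, which yields the identical formulas.
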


\begin{proof}
We note that $\PAb(E,e)^*$ is the subgroup of
$\Ab(E,e)^*$ of functions vanishing on the sum of the points, so we
dualize \eqref{eq:hmap_formula} and restrict.  The claim follows by
noting that $w_{e_2}-\sum_{f\in E_2}w_f=\sum_{f\in E_1}w_f$.
\end{proof}

\begin{example}
\label{ex:pc in h1}
To illustrate, consider the iterated parallel connection in 
Figure~\ref{fig:pc}.  The image of a function with value $1$ on all
but the root is shown, under the iterated application of $\plugin_x^*$.
In particular, let $E_1=\set{1,2,3,4}$ denote the uniform 
matroid drawn vertically on the right.  Then the projection of the
image to $\PAb(E_1,1)^*$ gives the function
$w=(-6,2,1,3)\in \PAb(E_1,1)^*$.  
\end{example}

\subsection{Parallel connections of arrangements}
\label{subsec:parallel}

Multiarrangements of hyperplanes are simply linear representations 
of matroids without loops.  In this section, we analyze the effects of parallel
connections on constructions such as the characteristic varieties of 
complex arrangement complements, introduced in \S\ref{subsec:hyp arr}.

Suppose $\A_1$ and $\A_2$ are central arrangements of hyperplanes 
in complex vector spaces $V_1$ and $V_2$, respectively, defined by 
polynomials $Q_1=\prod_{H\in\A_1}f_H$ and $Q_2=\prod_{H\in\A_2}g_H$. 
We assume here that $\A_1$ and $\A_2$ are simple arrangements.%

\begin{definition} 
\label{def:parcon}
For a choice of hyperplanes $H_1\in\A_1$ and $H_2\in\A_2$, 
the {\em parallel connection}, $\A_1\PC{H_1}{H_2}\A_2$, 
is the arrangement defined by the polynomial
\begin{equation}
\label{eq:parallel poly}
f_{H_1}\cdot \left(\prod_{H\in\A_1\setminus\{H_1\}}f_H\right) \cdot
\left(\prod_{H\in\A_2\setminus\{H_2\}}g_H\right),
\end{equation}
in the ring $\k[V_1^*]\otimes_\k \k[V_2^*]/(f_{H_1}-g_{H_2})$. 
\end{definition}

This is an arrangement in the codimension-$1$ subspace 
of $V\times W$ consisting of the pairs $(x,y)$ for which 
$f_{H_1}(x)=g_{H_2}(y)$.  Its underlying matroid is the 
parallel connection of those of $\A_1$ and $\A_2$.  
Figure \ref{fig:A3polar} illustrates this construction:  
we first realize the matroids from Figure \ref{fig:pc_example} 
by the braid arrangement and a pencil, perform 
a parallel connection, and then draw a generic planar slice.

\begin{figure}
\begin{tikzpicture}[scale=0.75]
\draw[color=blue](-1,-2) -- (-1,2);
\draw[style=thick, color=blue](1,-2) -- (1,2);
\draw[color=blue](-2,-1) -- (2,-1);
\draw[color=blue](-2,1) -- (2,1);
\draw[color=blue](-2,-2) -- (2,2);
\draw[color=blue](-2,2) -- (2,-2);
\node at (2.8,0) {$\pc$};
\draw[style=thick, color=red](5,-2) -- (5,2);
\draw[color=red](3.8,-1.5) -- (6.2,1.5);
\draw[color=red](3.8,1.5) -- (6.2,-1.5);
\node at (7.7,0) {$\mapsto$};
\draw[color=blue](10,-2) -- (10,2);
\draw[style=thick, color=blue!50!red](12,-2) -- (12,2);
\draw[color=blue](9,-1) -- (13,-1);
\draw[color=blue](9,1) -- (13,1);
\draw[color=blue](9,-2) -- (13,2);
\draw[color=blue](9,2) -- (13,-2);
\def\px{12}\def\py{0}
\foreach \theta in {-68,68} {
  \draw[color=red,rotate around={\theta:(\px,\py)}] 
       (\px-2.18,\py) -- (\px+2.18,\py);
}
\end{tikzpicture}
\caption{Parallel connection (in a generic planar slice view)}
\label{fig:A3polar}
\end{figure}
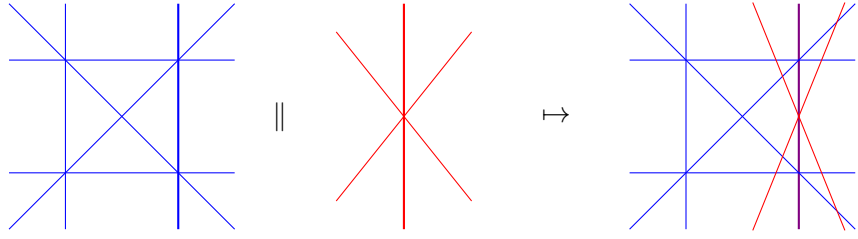

Let $M_1$ and $M_2$ denote the complements of $\A_1$ and $\A_2$, 
respectively.  We will denote the complement of $\A_1\PC{H_1}{H_2}\A_2$ 
by $M_1\PC{H_1}{H_2} M_2$.  

\begin{proposition}[cf.~\cite{EF, FP}]
\label{prop:falk}
Choose hyperplanes $H_1\in \A_1$ and $H_2\in \A_2$. 
The rational map
\[
\phi\colon \P(V)\times \P(W)\to \P(V\times W)
\]
given by $\phi([v],[w])=[(g_{H_2}(w)v,f_{H_1}(v)w)]$
restricts to a diffeomorphism of hyperplane complements,
\begin{equation}\label{eq:falkmap}
\phi\colon \P(M_1)\times\P(M_2)\to \P(M_1\PC{H_1}{H_2} M_2),
\end{equation}
whose inverse is given by $[(v,w)]\mapsto ([v],[w])$.
\end{proposition}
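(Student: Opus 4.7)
The plan is to verify directly that $\phi$ is well-defined on the given source, lands in the claimed target, and admits the stated formula as a two-sided inverse. All the maps involved are polynomial in homogeneous coordinates, hence holomorphic on their domains of definition, so once set-theoretic bijectivity is established smoothness is immediate.

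First I would check that $\phi$ descends to a well-defined map
\[
\P(V)\times \P(W)\dashrightarrow \P(V\times W)
\]
wherever at least one of $g_{H_2}(w)$ and $f_{H_1}(v)$ is nonzero: replacing $(v,w)$ by $(\lambda v,\mu w)$ multiplies $(g_{H_2}(w)v,f_{H_1}(v)w)$ by the common scalar $\lambda\mu$, so the projective class only depends on $([v],[w])$. On $\P(M_1)\times \P(M_2)$ we have $f_{H_1}(v)\neq 0$ and $g_{H_2}(w)\neq 0$, so $\phi$ is in particular regular there.

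Next I would verify that the image of $\phi$ on $\P(M_1)\times \P(M_2)$ lies in $\P(M_1\PC{H_1}{H_2}M_2)$. Write $(x,y)=(g_{H_2}(w)v,f_{H_1}(v)w)$. Since $f_H$ and $g_H$ are linear, we have $f_{H_1}(x)=g_{H_2}(w)f_{H_1}(v)=g_{H_2}(y)$, so $(x,y)$ lies in the hyperplane defining the ambient space of $\A_1\PC{H_1}{H_2}\A_2$. Moreover, the common value $f_{H_1}(v)g_{H_2}(w)$ is nonzero, and for each $H\in\A_1\setminus\{H_1\}$ we have $f_H(x)=g_{H_2}(w)f_H(v)\neq 0$, while for each $H\in\A_2\setminus\{H_2\}$ we have $g_H(y)=f_{H_1}(v)g_H(w)\neq 0$. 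Thus $\phi([v],[w])$ avoids every hyperplane of the parallel connection.

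Finally I would define $\psi\colon \P(M_1\PC{H_1}{H_2}M_2)\to \P(M_1)\times \P(M_2)$ by $\psi([(v,w)])=([v],[w])$ and check this is well-defined and inverse to $\phi$. For any $[(v,w)]$ in the parallel-connection complement, the hyperplane $H_1=H_2$ is avoided, so $f_{H_1}(v)=g_{H_2}(w)\neq 0$; in particular both $v$ and $w$ are nonzero, and $[v]\in\P(M_1)$, $[w]\in\P(M_2)$ by the analogous argument to step two applied coordinate by coordinate. Then
\[
\phi(\psi([(v,w)]))=[(g_{H_2}(w)v,f_{H_1}(v)w)]=[(f_{H_1}(v)v,f_{H_1}(v)w)]=[(v,w)],
\]
using $f_{H_1}(v)=g_{H_2}(w)$; and
\[
\psi(\phi([v],[w]))=([g_{H_2}(w)v],[f_{H_1}(v)w])=([v],[w]),
\]
since the scalars are nonzero. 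As $\phi$ and $\psi$ are both given by polynomial expressions in homogeneous coordinates with nonvanishing denominators on their respective source spaces, they are holomorphic, hence smooth. There is no real obstacle here beyond organizing the well-definedness checks, since the algebraic identity $f_{H_1}\circ\phi=g_{H_2}\circ\phi$ together with the nonvanishing conditions on each complement trivialize every verification.
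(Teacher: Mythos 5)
Your proposal is correct and follows essentially the same route as the paper's proof: the key identity $f_{H_1}(g_{H_2}(w)v)=g_{H_2}(f_{H_1}(v)w)$, the nonvanishing checks on each complement, and the explicit regular inverse $[(v,w)]\mapsto([v],[w])$. The paper merely compresses the well-definedness and inverse verifications that you spell out in full.
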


\begin{proof}
First, if $v\in M_1$ and $w\in M_2$, then clearly
\begin{equation}
\label{eq:fhgh}
f_{H_1}(g_{H_2}(w)v)=g_{H_2}(f_{H_1}(v)w).
\end{equation}
Since $f_{H_1}(v)w\neq0$, the restriction of $\phi$ in \eqref{eq:falkmap}
is well-defined and regular.  It is easy to check that $[(v,w)]\mapsto([v],[w])$ 
is also regular on the hyperplane complements, and it is inverse to $\phi$.
\end{proof}

We define {\em pointed arrangements} $(\A,H)$ just as in 
Definition~\ref{def:matcat}.  If $(\A_i,H_i)$ are pointed arrangements
for $i=1,2$ and $H\in \A_1$, as in Definition~\ref{def:pc mat} we let
\begin{equation}
\label{eq:plugin}
(\A_1,H_1)\plugin_H(\A_2,H_2)=(\A_1 \PC{H}{H_2}\A_2, H_1).
\end{equation}
In the same way, for complements of pointed arrangements,
let $\plugin_H\colon\P(M_1)\times\P(M_2)\to\P(M_1\PC{H}{H_2}M_2)$ 
denote the diffeomorphism \eqref{eq:falkmap}.

\subsection{Induced map in first homology} 
\label{subsec:h1map}

Let $(\A,H)$ be a pointed arrangement, which we may also regard as a 
pointed matroid.  Set $n=\abs{\A}$. Then, using the notation 
of \S\ref{subsec:hmap}, by Lemma~\ref{lem:H1ofA}, 
we have 
\begin{equation}
\label{eq:hah}
h(\A,H)\cong H_1(M(\A),\Z)\cong\Z^n\:\:\text{and}\:\: 
\PAb(\A,H)=H_1(\P M,\Z)\cong\Z^n \big\slash 
\Big(\sum_{K\in\A} x_K\Big) \big. .
\end{equation}

Our next result states that the effect of parallel connection on 
abelianized fundamental groups is described by the linear operad of
Proposition~\ref{prop:operad2}.

\begin{proposition}
\label{prop:H_1}  
Let $(\A_1,H_1)$ and $(\A_2,H_2)$ be pointed arrangements, 
and let $U_i=\P M(\A_i)$.  For any hyperplane $H\in \A_1$, 
under the identification $\PAb(\A,H)=H_1(U,\Z)$, 
the map $H_1(\plugin_H)\colon H_1(U_1,\Z)\oplus H_1(U_2,\Z)
\to H_1(U_1\plugin_H U_2, \Z)$ equals $\plugin_H$ from \eqref{eq:Phmap}.
\end{proposition}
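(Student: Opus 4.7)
The plan is to evaluate $H_1(\plugin_H)$ on the meridian basis from Lemma~\ref{lem:H1ofA} and match the result with formula \eqref{eq:Phmap_coords}. Since the abelian groups $H_1(U_i,\Z)=\PAb(\A_i,H_i)$ are generated by the meridian classes $\overline{x}_K$, subject only to the relation $\sum_{K\in\A_i}\overline{x}_K=0$, it suffices to compute $\phi_*(\overline{x}_K,0)$ for $K\in\A_1$ and (by symmetry) $\phi_*(0,\overline{x}_K)$ for $K\in\A_2$, where $\phi$ is the Falk--Proudfoot diffeomorphism of Proposition~\ref{prop:falk}.

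The main computation relies on the explicit formula $\phi(v,w)=(g_{H_2}(w)\,v,\;f_H(v)\,w)$. Pulling back the defining forms of the hyperplanes of the parallel connection, one obtains
\begin{equation*}
f_K\circ\phi=g_{H_2}(w)\,f_K(v)\ \text{ for }K\in\A_1,\qquad
g_K\circ\phi=f_H(v)\,g_K(w)\ \text{ for }K\in\A_2,
\end{equation*}
with the identification $f_H=g_{H_2}$ on the codimension-$1$ subspace witnessing the parallel connection. Let $\gamma$ be a small loop in $U_1$ representing a meridian $\overline{x}_K$. Then $\phi\circ(\gamma,\ast_2)$ is a loop in $U_1\plugin_H U_2$, and its class in $H_1$ is read off from the winding numbers around $0$ of each $f_L\circ\phi$ and $g_L\circ\phi$ along this image loop.

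A short case analysis completes the calculation. If $K\in\A_1\setminus\{H\}$, then $f_K(\gamma)$ contributes winding number $1$ and $f_H(\gamma)$ does not wind, so $\phi_*(\overline{x}_K,0)=\overline{x}_K$. If $K=H$, then $f_H(\gamma)$ winds once, producing contribution $1$ for every form of the shape $f_H(v)g_L(w)$ with $L\in\A_2\setminus\{H_2\}$ and for the identified hyperplane itself, while the forms $g_{H_2}(w)f_L(v)$ with $L\in\A_1\setminus\{H\}$ do not wind; hence
\[
\phi_*(\overline{x}_H,0)\;=\;\overline{x}_H+\sum_{L\in\A_2\setminus\{H_2\}}\overline{x}_L\;=\;\sum_{L\in\A_2}\overline{x}_L,
\]
using $\overline{x}_H=\overline{x}_{H_2}$ in the parallel connection. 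The analogous formulas hold for meridians from $H_1(U_2,\Z)$. These match exactly the values prescribed by \eqref{eq:hmap_formula} (and hence by \eqref{eq:Phmap_coords} after descent to $\PAb$), proving the proposition.

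There is no substantial obstacle; the computation is essentially a bookkeeping exercise. The only point requiring care is the identification of $H\in\A_1$ with $H_2\in\A_2$ as a single hyperplane in the parallel connection, so that the contribution of the meridian $\overline{x}_H$ splits between both factors. This is precisely the non-diagonal behavior that the operad map $\plugin_H$ of Proposition~\ref{prop:operad2} is designed to record.
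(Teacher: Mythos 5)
Your proof is correct and follows essentially the same route as the paper's: both arguments rest on the explicit Falk--Proudfoot formula of Proposition~\ref{prop:falk} together with Lemma~\ref{lem:H1ofA}, and the case analysis you perform (the identified hyperplane $H=H_2$ contributing $\sum_{L\in\A_2}\overline{x}_L$, all others mapping to themselves) is exactly the content of \eqref{eq:hmap_formula}. The only difference is packaging: the paper reduces to the induced monomial map of tori $T_1\times T_2\to T$ and reads off its effect on one-parameter subgroups, whereas you trace meridian loops and compute winding numbers of the pulled-back forms $g_{H_2}(w)f_K(v)$ and $f_H(v)g_K(w)$ --- the same exponents, computed directly.
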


\begin{proof}
Let $n_i=\abs{\A_i}$ and $T_i=(\C^*)^{n_i}/\C^*$ for $i=1,2$, 
and set $T=(\C^*)^{n_1+n_2-1}/\C^*$.  Consider the torus 
embeddings of \eqref{eq:torusembedding}:
\begin{equation}
\label{eq:torus square}
\vcenter{\vbox{\xymatrix{
U_1\times U_2 \ar[r]^{\plugin_H}\ar[d]  & U_1\plugin_H U_2\ar[d]\\
T_1\times T_2\ar[r]^{\plugin_H} & T
}}}
\end{equation}

By Lemma~\ref{lem:H1ofA}, the images of the vertical maps under $H_1(-,\Z)$
are isomorphisms, so it is sufficient to prove the claim for 
$T_1\times T_2\to T$.

For a torus $T$, we may identify $H_1(T,\Z)$ with the group of one-parameter
subgroups $X_*(T):=\Hom(\C^*,T)$.  For points $s\in T_1$ and $t\in T_2$,
order coordinates so that $H_i$ comes first in $\A_i$, and without loss
of generality, so that $H$ is last in $\A_1$. Then
the map $T_1\times T_2\to T$ is given in coordinates by
\[
((s_1,\ldots,s_{n_1}),(t_1,\ldots,t_{n_2}))\mapsto
(t_1s_1,\ldots,t_1s_{n_1-1},t_1s_{n_1},t_2s_{n_1},t_3s_{n-1},
\ldots,t_{n_2}s_{n-1}).
\]
This is a group homomorphism, and the identification $H_1(T,\Z)\cong
X_*(T)$ is functorial, so $H_1(\plugin_H)$ is the image of this
monomial map under $X_*(-)$.  This, in turn, is readily seen to 
be given by the formula \eqref{eq:Phmap}.
\end{proof}

\subsection{Characteristic varieties of parallel connections}
\label{subsec:cv pc}
We continue to let $U_i$ denote the projective complement of a 
(pointed) arrangement $(\A_i,H_i)$ for $i=1,2$.  Since 
$U_1\plugin_H U_2\cong U_1\times U_2$ by 
Proposition~\ref{prop:H_1}, we may express the characteristic
varieties of the parallel connection complement by means of 
the product formula~\eqref{eq:cvprod}.  That is, 
\begin{equation}
\label{eq:pc of cv}
\V^q(U_1\plugin_H U_2,\k)\cong \bigcup_{i=0}^q \V^i(U_1,\k)\times
\V^{q-i}(U_2,\k),
\end{equation}
under the restriction of the map $\plugin_H^*$ computed above.

Let $\Pl_1$ denote the arrangement of one point in $\C^1$, and 
for $n\geq 2$, let $\Pl_n$ denote a central arrangement of $n$ lines
$L_1,\ldots,L_n$ in $\C^2$, with basepoint $L_1$.  The latter 
arrangements are realizations of the rank-$2$ uniform matroids.  
Write $P_n=\P(M(\Pl_n))$, and note that $P_1=\set{\text{point}}$, 
whereas $P_n\cong \C\setminus \set{\text{$n-1$ points}}$, for $n\geq 2$. 

Parallel connections with the arrangements $\Pl_n$, for $n\geq3$, will
be building blocks in the polarization construction of \S\ref{sect:polar}.

\begin{proposition}
\label{prop:delta}
Let $\chi\colon\prod_{i=1}^n F_{m_i-1}\to \Z_N$ be a 
surjective homomorphism indexing a regular cover of 
$X=\prod_{i=1}^n P_{m_i}$, where $n>1$ and each $m_i\geq 3$.
If each restriction  $\chi_i\colon F_{m_i-1}\to \Z_N$ is also
surjective, and $\k$ is a field for which $\ch(\k)\nmid N$, then 
\begin{equation}
\label{eq:delta prod free}
\Delta_{X,\chi}^\k(\mathbf{u},x)=u_1\prod_{i=1}^n(1+(m_i-1)x) + 
\prod_{i=1}^n (m_i-2) \sum_{1<k\mid n} \phi(k) u_k x^n.
\end{equation}
\end{proposition}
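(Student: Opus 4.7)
The strategy is to apply the product formula \eqref{eq:delta prod} inductively to reduce the computation to the twisted Poincar\'e polynomials of the factors $P_{m_i}$, and then to collapse the resulting sum using the surjectivity hypothesis on each restriction $\chi_i$. First, by iterating Proposition \ref{prop:cover of product} (equivalently, by applying the K\"unneth formula for rank-$1$ local systems $n-1$ times, as in \cite[\S13.1]{PS10}), I would obtain
\begin{equation*}
\Delta_{X,\chi}^\k(\mathbf{u},x) = \sum_{\rho\in\im(\widehat{\chi}_\k)}
u_{\abs{\rho}}\prod_{i=1}^n\poin^\k(P_{m_i},\rho_i;x),
\end{equation*}
where $\rho_i$ denotes the projection of $\rho$ to $\widehat{F_{m_i-1}}=\widehat{\pi_1(P_{m_i})}$.

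Next, I would substitute the twisted Poincar\'e polynomials computed in Example \ref{ex:free poin}: since $P_{m_i}\cong \C\setminus\{m_i-1\text{ points}\}$, one has $\poin^\k(P_{m_i},\bo;x)=1+(m_i-1)x$, while $\poin^\k(P_{m_i},\rho_i;x)=(m_i-2)x$ for any nontrivial character $\rho_i$. The key point is then to separate the sum according to which factors $\rho_i$ are trivial. This is precisely where the hypothesis on the restrictions enters: since each $\chi_i\colon F_{m_i-1}\surj\Z_N$ is surjective, a character $\rho=\iota\circ\chi\in\im(\widehat{\chi}_\k)$ satisfies $\rho_i=\iota\circ\chi_i=\bo$ if and only if $\iota=\bo$, and moreover $\abs{\rho}=\abs{\iota}$ in all cases. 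Consequently, the trivial character contributes $u_1\prod_{i=1}^n(1+(m_i-1)x)$, while each nontrivial character $\iota\in\widehat{\Z_N}_\k$ of order $k$ contributes $u_k\cdot x^n\prod_{i=1}^n(m_i-2)$. Grouping by order and using that $\widehat{\Z_N}_\k$ contains exactly $\phi(k)$ elements of each order $k\mid N$ (which holds because $\ch(\k)\nmid N$) yields the claimed identity.

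The main technical point, rather than a genuine obstacle, is recognizing that the surjectivity of every restriction $\chi_i$ is exactly the condition required to produce a clean factorization. Without it, some $\rho_i$ could be trivial while $\rho$ itself is not, introducing mixed cross-terms of the form $u_{\abs{\rho}}(m_i-2)x\prod_{j\neq i}(1+(m_j-1)x)$, and the uniform product shape in the formula would be lost.
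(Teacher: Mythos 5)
Your proof is correct and takes essentially the same route as the paper: iterate the product formula of Proposition~\ref{prop:cover of product}, use the surjectivity of each restriction $\chi_i$ to conclude that every projection $\rho_i$ of a nontrivial $\rho\in\im(\widehat{\chi}_\k)$ is nontrivial, and substitute the twisted Poincar\'e polynomials from Example~\ref{ex:free poin}. (You also implicitly correct the summation range from $k\mid n$ to $k\mid N$, which is the intended reading of \eqref{eq:delta prod free}.)
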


\begin{proof}
Let  $\rho=(\rho_1,\dots,\rho_n)\in\im(\widehat{\chi}_\k)$ be a 
non-trivial character.  Since each $\chi_i$ is surjective, the 
characters $\rho_i$ are also non-trivial. 
By formula \eqref{eq:poin free}, we have that 
$\poin^{\k}(P_{m_i},\rho_i;x)=(m_i-2)x$. 
The conclusion then follows from the product
formula of Proposition~\ref{prop:cover of product}.
\end{proof}

\begin{example}
\label{ex:cdp}
A special class of arrangements obtained by iterated parallel 
connection was studied by Choudary, Dimca and Papadima 
in \cite{CDP} (see also \cite{Wi1} for recent work in this direction). 
Let $(n,m_1,\ldots,m_n)$ be positive integers, and consider 
the pointed arrangement
\begin{equation}
\label{eq:cdp}
\A=\Pl_n\plugin_{L_1} \Pl_{m_1}\plugin_{L_1} \Pl_{m_2}\plugin_{L_1}\cdots
\plugin_{L_1} \Pl_{m_n}.
\end{equation}

The intersection of $\A$ with a generic, affine plane consists of 
$m_1+\cdots+m_n$ lines.  The lines may be partitioned into $n$ classes,
for which the lines in each class are parallel, and two lines from different
classes intersect at a double point.  By applying formula \eqref{eq:pc of cv} 
and Proposition~\ref{prop:H_1} repeatedly, one may obtain a full description 
of the characteristic varieties $\V^q(\A,\k)$, for all $q\geq0$, thereby 
recovering a result from \cite{CDP}.
\end{example}

\section{The polarization of a multiarrangement}
\label{sect:polar}

In this section, we use a special type of iterated 
parallel connection, which we call polarization, 
to construct hyperplane arrangements for which 
the (usual) Milnor fiber has non-trivial torsion 
in homology.

\subsection{Iterated parallel connections}
\label{subsec:polar arr}
We saw in the previous section that the character groups 
of parallel connections are isomorphic to the character 
groups of the factor arrangements via a straightforward,  
yet nontrivial operadic isomorphism.  Moreover, this 
isomorphism restricts to the respective characteristic varieties.

Since the characteristic varieties of the line arrangements 
$\Pl_n$ are positive dimen\-sional for $n\geq3$ but are simple
in nature, we will iterate parallel connections of an arbitrary 
multiarrangement with a collection of line arrangements.  In doing
so, we can construct new arrangements whose characteristic varieties
vary with the choice of multiplicities.
By analogy with polarization for monomial ideals (see, for example,
\cite[\S3.2]{MiSt}), we will call this the polarization of the
multiarrangement. 

\begin{definition}
\label{def:polar}
The {\em polarization}\/ of  a multiarrangement $(\A,m)$,
denoted by $\A\pc m$, is the arrangement of 
$N:=\sum_{H\in\A}m_H$ hyperplanes 
given by
\begin{equation}
\label{eq:polar}
\A\pc m=\A\plugin_{H_1}\Pl_{m_{H_1}}\plugin_{H_2}\Pl_{m_{H_2}}
\plugin_{H_3} \cdots\plugin_{H_n}\Pl_{m_{H_n}},
\end{equation}
if $\A=\set{H_1,\ldots,H_n}$ is the underlying simple arrangement.
\end{definition}

Note that $\rank(\A\pc m) = n_2(\A,m)+\rank \A$, where we let
\begin{equation}
\label{eq:nk}
n_k(\A,m)=\abs{\set{H\in\A\colon m_H\geq k}}
\end{equation}
Figure~\ref{fig:pc} shows the construction of the polarization $(\Pl_4,(3,2,1,3))$.
Furthermore, note that $\Pl_1$ is the unit of the parallel connection operad, 
so the terms given by hyperplanes $H$ with $m_H=1$ could be 
omitted from the formula \eqref{eq:polar} without change.

As observed in \S\ref{subsec:parallel}, the (projectivized) complement 
of a polarization splits as a product.  Write $P(\A)= \prod_{H\in\A} P_{m_H}$. 
By Proposition~\ref{prop:falk}, we have a homeomorphism
\begin{equation}
\label{eq:prod_decomp}
\xymatrix{
\theta \colon U(\A)\times P(\A) \ar^-{\cong}[r] &U(\A\pc m)}.
\end{equation}

By Proposition \ref{prop:H_1}, the isomorphism $\theta_*$
is given in degree $1$ by composing the relevant maps 
of type \eqref{eq:Phmap}:
\begin{equation}
\label{eq:theta_star}
\theta_*\colon H_1(U,\Z)\plugin_{H_1}H_1(P_{m_1},\Z)\plugin_{H_2}\cdots
\plugin_{H_n}H_1(P_{m_n},\Z)\to H_1(U(\A\pc m),\Z).
\end{equation}

The twisted Poincar\'e polynomial of the polarization is 
straightforward, using \eqref{eq:poin free}:
for any $\rho$ in the character torus of $U(\A\pc m)$,
\begin{equation}
\label{eq:poincare}
\poin(U(\A\pc m),\rho;x)=
\begin{cases}
\poin(U(\A),\rho;x)\cdot\prod_{\substack{H\in \A\colon\\ 
m_H\geq2}}(m_H-2)t & \text{for $\rho\neq\bo$;}\\
\poin(U(\A),\rho;x)\cdot\prod_{\substack{H\in \A\colon\\ 
m_H\geq2}}(1+(m_H-1)t) & \text{for $\rho=\bo$.}
\end{cases}
\end{equation} 

By applying formula \eqref{eq:pc of cv} repeatedly, 
we see that the characteristic varieties of a polarization
$\A\pc m$ can be obtained directly from the those of $\A$.

\subsection{The Milnor fibration of a polarization}
\label{subsec:mf polarized}

Our goal in this subsection is to relate the unreduced 
Milnor fiber $F(\A,m)$ associated with a multiarrangement 
$(\A,m)$ with the (reduced) Milnor fiber $F(\A\pc m)$ of the
simple arrangement $\A\pc m$ obtained by polarization.

Let $N=\sum_{H\in \A} m_H$. Using the homeomorphism 
$\theta$ of \eqref{eq:prod_decomp}, we have
a map
\begin{equation}
\label{eq:theta^star}
\xymatrix{
\theta^*\colon 
H^1(U(\A\pc m),\Z_N)\ar[r] & H^1(U(\A),\Z_N)\oplus H^1(P(\A),\Z_N),
}
\end{equation}
dual to \eqref{eq:theta_star}.  For each $H\in\A$, we define a 
cohomology class $\epsilon_H\in H^1(P_{m_H},\Z_N)$ in coordinates by 
\begin{equation}
\label{eq:eps}
\epsilon_H=(1-m_H,1,1,\ldots,1), 
\end{equation}

Recall from Proposition \ref{prop:mf} that the cover 
$F(\A,m)\to U(\A)$ is classified by the class 
$\delta_{\A,m}\in H^1(U(\A),\Z_N)$ given by 
$\delta_{\A,m}(x_H)=m_H \bmod N$ for $H\in\A$. 
Also recall that we write $\delta_{\A}=\delta_{\A,m}$ 
when all multiplicities are equal to $1$.

\begin{lemma}
\label{lem:img of delta}
We have $\theta^*(\delta_{\A\pc m})=(\delta_{\A,m},\epsilon)$.
\end{lemma}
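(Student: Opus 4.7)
The plan is to unwind $\theta^*$ using the operad structure from Section~\ref{sect:operad}, and then verify the identity coordinate-by-coordinate. First, I would note that, by definition, the hyperplanes of $\A\pc m$ split into two kinds: the ``base'' hyperplanes $H\in\A$ (each identified with the basepoint $L_1$ of $\Pl_{m_H}$ under the parallel connection), and the ``new'' hyperplanes $L_2,\dots,L_{m_H}$ of each pencil $\Pl_{m_H}$ with $m_H\ge 2$. By Proposition~\ref{prop:mf}, the class $\delta_{\A\pc m}\in H^1(U(\A\pc m),\Z_N)$ assigns the value $1\bmod N$ to every such meridian.

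Next, I would dualize the operad isomorphism of Proposition~\ref{prop:H_1} via Proposition~\ref{prop:proj operad2 dual}. Iterating the single-step formula \eqref{eq:pluginstar} over each hyperplane $H\in\A$ expresses $\theta^*$ as a direct sum decomposition
\[
H^1(U(\A\pc m),\Z_N) \;\cong\; H^1(U(\A),\Z_N)\;\oplus\;\bigoplus_{H\in\A} H^1(P_{m_H},\Z_N),
\]
with each summand computed by the explicit piecewise formula in Proposition~\ref{prop:proj operad2 dual}.

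Then, I would simply plug $\delta_{\A\pc m}$ (the constant function $w\equiv 1$) into this iterated formula. At the step corresponding to $H\in\A$, applying \eqref{eq:pluginstar} produces, in the $\A$-coordinate at the plug point $x=H$, the value $\sum_{f\in\Pl_{m_H}}1=m_H$; in the $\Pl_{m_H}$-coordinate at the identified point $L_1$ (the $e_2$ of \eqref{eq:pluginstar}), the value $1-\sum_{f\in\Pl_{m_H}}1=1-m_H$; and at each other line $L_i$ of $\Pl_{m_H}$, the value remains $1$. Because the successive plug points $H_1,\dots,H_n$ are distinct, the operad iteration diagonalizes and each $\Pl_{m_H}$ summand is determined by its own step, with no crossed contributions. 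Reading off the result, the $\A$-component is $(m_H)_{H\in\A}\bmod N=\delta_{\A,m}$, and the $\Pl_{m_H}$-component is $(1-m_H,1,1,\dots,1)=\epsilon_H$, giving $\theta^*(\delta_{\A\pc m})=(\delta_{\A,m},\epsilon)$.

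The proof is essentially bookkeeping; there is no substantive obstacle, only the need to keep the ``plug point'' and ``basepoint of the plugged-in matroid'' straight in \eqref{eq:pluginstar} and to verify that no correction terms couple the different $\Pl_{m_H}$ factors. Both are immediate once one observes that the attachments happen at distinct points of $\A$, so the operad iteration factors through independent single-step calculations.
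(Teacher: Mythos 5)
Your proof is correct and follows essentially the same route as the paper's: both reduce the claim to the explicit dual formula \eqref{eq:pluginstar} of Proposition~\ref{prop:proj operad2 dual} and evaluate it on the constant function $1$, reading off $\epsilon_H=(1-m_H,1,\dots,1)$ from the first two cases and $\delta_{\A,m}(x_H)=\sum_{L\in\Pl_{m_H}}1=m_H$ from the third. Your extra remark that the iteration over distinct plug points $H_1,\dots,H_n$ decouples is a point the paper leaves implicit, but it is the same argument.
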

\begin{proof}
We reduce our operadic isomorphism \eqref{eq:Phmap bis} from
Proposition~\ref{prop:proj operad2 dual} modulo $N$.  Since
$\delta_{\A\pc m}(x_K)=1$ for all $K\in\A\pc m$, the projection of 
$\theta^*(\delta_{\A\pc m})$ onto the summand $H^1(P_{m_H},\Z_N)$
agrees with $\epsilon_H$, using the first two cases of the 
formula \eqref{eq:pluginstar}, for each $H\in\A$.

Using the third case of formula \eqref{eq:pluginstar}, we have
\begin{align*}
\theta^*(\delta_{\A\pc m})(\overline{x}_H)&=\sum_{L\in\Pl_{m_H}}1\\
&=m_H,
\end{align*}
for each $H\in\A$, so the projection of $\theta^*(\delta_{\A\pc m})$
onto $H^1(U,\Z_N)$ equals $\delta_{\A,m}$.
\end{proof}

\begin{figure}
\newcommand{\drawBackbone}
{
\node[root] (h0) at (0,0) [label=left:-6] {};
\node[plain] (h1) at (0,1) [label=left:2] {};
\node[plain] (h2) at (0,2) [label=left:1] {};
\node[plain] (h3) at (0,3) [label=left:3] {};
\draw (h0) -- (h1) -- (h2) -- (h3);
}
\[
\begin{tikzpicture}[scale=0.75,baseline=(current bounding box.center),
plain/.style={circle,draw,inner sep=1.5pt,fill=white},
root/.style={circle,draw,inner sep=1.5pt,fill=black}]

\node[root] (h0) at (0,0) [label=left:-8] {};
\node[plain] (h1) at (0,1) [label=left:1] {};
\node[plain] (h2) at (0,2) [label=left:1] {};
\node[plain] (h3) at (0,3) [label=left:1] {};
\draw (h0) -- (h1) -- (h2) -- (h3);

\draw (h0) -- (1,0) node [plain,label=below:1] {} -- (2,0) 
                    node [plain,label=below:1] {} ;
\draw (h1) -- (1,1) node [plain,label=right:1] {};
\draw (h3) -- (1,3) node [plain,label=above:1] {} -- (2,3) 
                    node [plain,label=above:1] {} ;
\end{tikzpicture}
\quad\mapsto\quad
\begin{tikzpicture}[scale=0.75,baseline=(current bounding box.center),
plain/.style={circle,draw,inner sep=1.5pt,fill=white},
root/.style={circle,draw,inner sep=1.5pt,fill=black}]
\def\shift{1.2}
\drawBackbone
\node (k0) at (\shift,0) [root,label=below:-2] {};
\node (k1) at (\shift,1) [root,label=below:-1] {};
\node (k2) at (\shift,2) [root,label=right:0] {};
\node (k3) at (\shift,3) [root,label=above:-2] {};

\draw[dotted] (h0) -- (k0);
\draw[dotted] (h1) -- (k1);
\draw[dotted] (h2) -- (k2);
\draw[dotted] (h3) -- (k3);
\draw (k0) -- (\shift+1,0) node [plain,label=below:1] {} -- 
              (\shift+2,0) node [plain,label=below:1] {};
\draw (k1) -- (\shift+1,1) node [plain,label=below:1] {};
\draw (k3) -- (\shift+1,3) node [plain,label=above:1] {} -- 
              (\shift+2,3) node [plain,label=above:1] {};
\end{tikzpicture}
\]
\caption{An iterated application of $\plugin_x^*$}
\label{fig:pc2}
\end{figure}

\begin{example}
Let $\A=\Pl_4$, and consider the polarization $(\A,m)$ for $m=(3,2,1,3)$
shown in Figure~\ref{fig:pc}.  Reducing the coefficients modulo $N=9$ in
Figure~\ref{fig:pc2} gives the coordinates of $\theta^*(\delta_{\A,m})$
shown for each factor.
\end{example}

\begin{lemma}
\label{lem:mf polar}
Let $(\A,m)$ be a multiarrangement, and let $\A\pc m$ be its polarization.  
We then have a pullback diagram between the respective Milnor covers, 
\[
\xymatrix{
F(\A,m) \ar[r] \ar[d] & F(\A\pc m) \ar[d] \\
U(\A) \ar[r]^(.45){j}  & U(\A\pc m), 
}
\]
where $j$ is the map given by choosing a base point 
in each space $P_{m_H}$, for $H\in \A$. 
\end{lemma}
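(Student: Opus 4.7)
The plan is to verify the pullback diagram by identifying the classifying cohomology classes of the two Milnor fiber covers and showing that $j^*$ sends one to the other. Since both $F(\A,m)\to U(\A)$ and $F(\A\pc m)\to U(\A\pc m)$ are regular $\Z_N$-covers, with $N=\sum_{H\in\A} m_H$, they are classified (up to equivalence of covers) by cohomology classes in $H^1(-,\Z_N)$, and pullback of covers corresponds to pullback of classifying classes. By Proposition~\ref{prop:mf}, these classes are $\delta_{\A,m}$ and $\delta_{\A\pc m}$, respectively.

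First I would factor $j$ through the product. Writing $P(\A)=\prod_{H\in\A} P_{m_H}$, the choice of basepoints $p_H\in P_{m_H}$ gives an inclusion $\iota\colon U(\A)\hookrightarrow U(\A)\times P(\A)$, $x\mapsto (x,(p_H)_H)$, and by definition $j=\theta\circ\iota$, where $\theta$ is the homeomorphism from \eqref{eq:prod_decomp}.

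Next I would compute, using Lemma~\ref{lem:img of delta} and the splitting \eqref{eq:theta^star}:
\[
j^*(\delta_{\A\pc m})=\iota^*\bigl(\theta^*(\delta_{\A\pc m})\bigr)=\iota^*(\delta_{\A,m},\epsilon)=\delta_{\A,m}+\iota^*(\epsilon),
\]
where in the last equality we use that $\iota^*$ on the first summand is the identity. But $\iota$ sends $U(\A)$ into a single point of $P(\A)$, so the restriction of any class pulled back from $P(\A)$ is zero; in particular $\iota^*(\epsilon)=0$. Therefore $j^*(\delta_{\A\pc m})=\delta_{\A,m}$, so the pullback of the Milnor fiber cover $F(\A\pc m)\to U(\A\pc m)$ along $j$ is equivalent, as a regular $\Z_N$-cover of $U(\A)$, to the cover classified by $\delta_{\A,m}$, namely $F(\A,m)\to U(\A)$.

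The main obstacle is really the bookkeeping carried out in Lemma~\ref{lem:img of delta}: once the operadic computation of $\theta^*(\delta_{\A\pc m})$ is in hand, the pullback assertion follows formally from the classification of regular cyclic covers and the fact that restriction to a basepoint kills $H^1(P(\A),\Z_N)$. A minor additional point worth noting is that one should identify the pullback not merely as an abstract cover, but as the Milnor fiber $F(\A,m)$; this identification is built into Proposition~\ref{prop:mf}, which constructs the equivalence between the geometric Milnor fiber and the cover defined by $\delta_{\A,m}$.
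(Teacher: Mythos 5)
Your proposal is correct and follows essentially the same route as the paper: both arguments reduce the pullback claim to the identity $j^*(\delta_{\A\pc m})=\delta_{\A,m}$ in $H^1(U(\A),\Z_N)$, obtained by combining Lemma~\ref{lem:img of delta} with the observation that $j^*$ is the projection of $\theta^*$ onto its first factor. The only difference is that you spell out why that projection statement holds (restriction to a basepoint of $P(\A)$ kills the second summand), which the paper asserts without elaboration.
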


\begin{proof}
Set $N=\sum_{H\in \A} m_H$ and $\B=\A\pc m$.  
The cover $F(\A,m) \to U(\A)$ is defined by the homomorphism 
$\delta_{\A,m} \colon H_1(U(\A),\Z)\surj \Z_N$, $x_H\mapsto m_H$, 
whereas the cover $F(\B) \to U(\B)$ is defined by the homomorphism 
$\delta_{\B} \colon H_1(U(\A'),\Z)\surj \Z_N$, described above.

The map $j^*$ is simply the projection of $\theta^*$ onto its first
factor.  Using Lemma~\ref{lem:img of delta}, we see that 
$j^*(\delta_{\B})=\delta_{\A,m}$, and this completes the proof. 
\end{proof}

\subsection{Torsion in the homology of the Milnor fiber of a polarization}
\label{subsec:mf polar}

Recall from Theorem~\ref{thm:milnor multi} that unreduced polynomials 
associated with certain multiarrangements give rise to torsion in the 
homology of their Milnor fibers.  We now use the polarization 
construction to replace multiarrangements with (larger) simple 
arrangements having the same property, thus completing the 
proof of Theorem \ref{thm:intro polar tors} from the Introduction.

\begin{theorem}
\label{th:torsmf}
Suppose $\A$ admits a pointed multinet structure, with multiplicity vector 
$m$, and distinguished hyperplane $H$.  Set $\A' =\A\setminus \{H\}$, 
and let $p$ be a prime dividing $m_H$.   Then, there is a choice of 
multiplicities $m'$ on $\A'$ such that the Milnor fiber of the polarization 
$\B=\A'\pc m'$ has $p$-torsion in homology, in degree $1+n_3(\A',m')$.
\end{theorem}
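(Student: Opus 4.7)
The plan is to combine Theorem \ref{thm:milnor multi} with the product decomposition of the polarization complement and a Künneth analysis of the Milnor fiber cover. First, I apply Theorem \ref{thm:milnor multi} to produce multiplicities $m'$ on $\A'$ for which $H_1(F(\A',m'),\Z)$ has nonzero $p$-torsion; invoking Remark \ref{rem:avoid2}, I further arrange that $m'_K \neq 2$ for every $K \in \A'$ and that $p \nmid N' := \sum_K m'_K$. Set $s := n_3(\A',m')$, which now equals $n_2(\A',m')$. Proposition \ref{prop:falk} (refined by Proposition \ref{prop:H_1}) identifies $\theta\colon U(\A') \times P(\A') \xrightarrow{\cong} U(\B)$, where $P(\A') = \prod_{K\colon m'_K\geq 3} P_{m'_K}$ is a product of $s$ wedges of at least two circles. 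By Lemmas \ref{lem:mf polar} and \ref{lem:img of delta}, the cover $F(\B) \to U(\B)$ corresponds under $\theta$ to the $\Z_{N'}$-cover classified by $(\delta_{\A',m'}, \epsilon) \in H^1(U(\A'),\Z_{N'}) \oplus H^1(P(\A'),\Z_{N'})$.

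For any field $\K$ of characteristic coprime to $N'$, I combine Theorem \ref{thm:eko} with the Künneth formula for rank-one local systems on products to obtain
\[
H_q(F(\B),\K) \cong \bigoplus_{\iota} \bigoplus_{i+j=q} H_i\bigl(U(\A'),\K_{\rho_1(\iota)}\bigr) \otimes H_j\bigl(P(\A'),\K_{\rho_2(\iota)}\bigr),
\]
where $\iota$ ranges over $\Hom(\Z_{N'},\K^*)$ and $\rho_1(\iota) = \iota\circ\delta_{\A',m'}$, $\rho_2(\iota) = \iota\circ\epsilon$ are the corresponding characters. By Example \ref{ex:V of Fn} and Künneth the numbers $\dim_\K H_j(P(\A'),\K_{\rho_2})$ are independent of $\K$, and, since each factor $P_{m'_K}$ has $m'_K \geq 3$, the top piece satisfies $\dim H_s(P(\A'),\rho_2) = \prod_K \dim H_1(P_{m'_K},\rho_{2,K}) \geq 1$ for every $\iota$.

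Because $F(\A',m')$ is a CW-complex of dimension $\rank \A' - 1$, its top integral homology is torsion-free; summing over $\iota$ via Theorem \ref{thm:eko} then forces $\dim_\k H_i(U(\A'),\k_{\rho_1}) = \dim_\C H_i(U(\A'),\C_{\rho_1})$ whenever $i \neq 1$, while for $i = 1$ the individual excesses are nonnegative (UCT) and sum to $\dim_{\F_p} H_1(F(\A',m'),\Z)[p] > 0$. Plugging into the Künneth decomposition,
\[
\dim_\k H_{1+s}(F(\B),\k) - \dim_\C H_{1+s}(F(\B),\C) = \sum_\iota \bigl(\dim_\k H_1(U(\A'),\k_{\rho_1(\iota)}) - \dim_\C H_1(U(\A'),\C_{\rho_1(\iota)})\bigr) \cdot \dim H_s(P(\A'),\rho_2(\iota)),
\]
which is strictly positive. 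By the Universal Coefficient Theorem this excess must be absorbed by $p$-torsion in $H_s(F(\B),\Z) \oplus H_{1+s}(F(\B),\Z)$.

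The principal technical obstacle is to localize this torsion to degree $1+s$ rather than $s$. My plan is to use the flexibility guaranteed by Proposition \ref{prop:Milnor dominates} and Remark \ref{rem:avoid2}: one chooses the lift $m'$ of the dominating cover so that every $\iota$ contributing to $H_1(F(\A',m'),\Z)[p]$ satisfies $\rho_{2,K}(\iota) \neq \bo$ for each $K$ with $m'_K \geq 3$. Under this arrangement, the analogous Künneth sum in degree $s$ collapses---each contributing $\iota$ has $\dim H_{s-1}(P(\A'),\rho_2(\iota)) = 0$, since any tensor factor with $\rho_{2,K} \neq \bo$ forces $j_K = 1$ for nonvanishing. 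Hence $\dim_\k H_s(F(\B),\k) = \dim_\C H_s(F(\B),\C)$, so $H_s(F(\B),\Z)$ carries no $p$-torsion, and the UCT forces the degree-$(1+s)$ excess to reflect $p$-torsion of $H_{1+s}(F(\B),\Z)$ itself. Verifying that such a calibration of multiplicities is always attainable---compatibly with $p \nmid N'$ and $m'_K \neq 2$---is the delicate step, but it follows from translating the lattice of admissible lifts in Proposition \ref{prop:Milnor dominates} by sufficiently many copies of $(pr\Z)^n$ to impose the required nontriviality of $\iota\circ\epsilon$ on each pencil factor.
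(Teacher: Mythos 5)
Your overall architecture is the paper's: produce $m'$ via Theorem~\ref{thm:milnor multi} and Remark~\ref{rem:avoid2}, split $U(\B)\cong U(\A')\times P(\A')$, identify the Milnor cover with the class $(\delta_{\A',m'},\epsilon)$, and read off the degree-$(1+n_3)$ excess from the K\"unneth/product formula. But two of your justifications do not hold up, and both are repaired by one observation you never make: by \eqref{eq:eps}, each $\epsilon_K=(1-m'_K,1,\dots,1)$ has a coordinate equal to $1$, so $\epsilon_K$ is \emph{surjective} onto $\Z_{N'}$ (this is exactly what Lemma~\ref{lem:img of delta} and Proposition~\ref{prop:delta} are used for in the paper). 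Hence for every nontrivial $\iota$ the character $\rho_2(\iota)$ is nontrivial on every pencil factor, and $\poin^\k(P(\A'),\rho_2;x)=\prod_{K}(m'_K-2)\,x^{n_3}$ is a single monomial, independent of $\k$. Your entire final paragraph --- ``calibrating'' $m'$ by translating the lattice of lifts so that $\iota\circ\epsilon_K\neq\bo$ --- is therefore addressing a non-issue, and as written it is not a proof: translating $m'$ changes $N'$ and hence $\epsilon$ itself, and you give no argument that the translation achieves the stated condition. The condition is automatic; no calibration is needed.

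The second problem is a genuinely false intermediate claim: that $\dim_\k H_i(U(\A'),\k_{\rho_1})=\dim_\C H_i(U(\A'),\C_{\rho_1})$ for all $i\neq 1$. Torsion-freeness of the top homology of $F(\A',m')$ controls only the top degree, and even there the UCT contributes $\Tor(H_{i-1},\k)$; indeed, since $H_1(F(\A',m'),\Z)$ has $p$-torsion by construction, summing Theorem~\ref{thm:eko} over $\iota$ shows that $\sum_\iota\bigl(\dim_\k H_2(U(\A'),\k_{\rho_1})-\dim_\C H_2(U(\A'),\C_{\rho_1})\bigr)>0$, directly contradicting your claim for $i=2$. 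Fortunately your key displayed formula for the degree-$(1+n_3)$ excess does not need this claim: once one knows $\poin(P(\A'),\rho_2;x)$ is the monomial $Cx^{n_3}$ for nontrivial $\iota$, the only K\"unneth term in total degree $1+n_3$ is $H_1(U(\A'))\otimes H_{n_3}(P(\A'))$, whatever happens in the other degrees of $U(\A')$. With that substitution your argument closes, and your careful treatment of the UCT ambiguity between degrees $n_3$ and $1+n_3$ (showing the degree-$n_3$ excess vanishes because $H_0(U(\A'),\k_{\rho_1})=0$ for $\rho_1\neq\bo$) is in fact slightly more scrupulous than the paper's own conclusion.
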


\begin{proof}
By Theorem~\ref{thm:milnor multi}, there exists a choice of multiplicities
$m'$ on $\A'$ such that $H_1(F(\A',m'),\Z)$ has $p$-torsion. 
Set $U=U(\A')$ and $\delta=\delta_{\A',m'}$. Then, the coefficient of $x$ in 
$\Delta^\k_{U,\delta}(\mathbf{u},x)-\Delta^\C_{U,\delta}(\mathbf{u},x)$ 
is strictly positive, provided $\ch(\k)=p$.

As noted in Remark~\ref{rem:avoid2}, we may assume additionally 
that each $m'_H\neq2$.  Form the polarization $\B=\A'\pc m'$, and let 
$P(\A')=\prod_{K\in\A'} P_K$.  By Lemmas~\ref{lem:img of delta} and  
\ref{lem:mf polar}, the Milnor fiber $F(\A',m')$ is the pullback of
$F(\B)$ along the map $j\colon U(\A') \to U(\B)$, and 
$(\theta^*)(\delta_{\B})=(\delta,\epsilon)$,
where $\epsilon$ is given by \eqref{eq:eps}.  

Let $N'=\sum_{K\in \A'} m'_K$ and $n_3=n_3(\A',m')$. 
By construction, $\epsilon_K$ lifts to a surjective homomorphism
$F_{m'_K-1}\surj \Z_{N'}$. Thus, by Proposition~\ref{prop:delta},
\begin{equation}
\label{eq:delta epsilon}
\Delta^\k_{P(\A'),\epsilon}(\mathbf{u},x)=u_1
\prod_{K\in\A'}(1+(m'_K-1)x)+cx^{n_3}, 
\end{equation}
where $c$ is a 
(strictly positive) multiple of $\sum_{1<k \mid N'}\phi(k)u_k$.
Then, by the product formula of
Proposition~\ref{prop:cover of product}, the coefficient 
of $x^{1+n_3}$ in
$\Delta^\k_{\B,\delta_{\B}}(\mathbf{u},x)-
\Delta^\C_{\B,\delta_{\B}}(\mathbf{u},x)$
is also strictly positive: that is, $H_{1+n_3}(F(\B),\Z)$
has $p$-torsion.
\end{proof}

\begin{example}
\label{ex:polar del B3}
We recall the deleted ${\rm B}_3$-arrangement $\A'$ from 
Examples~\ref{ex:deleted B3} and \ref{ex:mf deleted B3}.
By Theorem~\ref{th:torsmf}, there exists a polarization $\A'\pc m'$
for which $F(\A'\pc m')$ has $2$-torsion in homology.  As in 
Example~\ref{ex:mf deleted B3}, the choice 
$m'=(8,1,3,3,5,5,1,1)\in \Z^8$ avoids the multiplicity $2$.

Let $\B=\A'\pc m'$, an arrangement of $27$ 
hyperplanes in $\C^8$, with defining polynomial 
\begin{align}
\label{eq:big Q}
Q= &\: xy(x^2-y^2)(x^2-z^2)(y^2-z^2)
w_1w_2w_3w_4w_5 \cdot \\ \notag
&\: (x^2-w_1^2)(x^2-2w_1^2)(x^2-3w_1^2)(x-4w_1) 
((x-y)^2-w_2^2)((x+y)^2-w_3^2) \cdot \\ \notag
&\: ((x-z)^2-w_4^2)((x-z)^2-2w_4^2)((x+z)^2-w_5^2)((x+z)^2-2w_5^2).
\end{align}

Then $\poin(U(\B),x)=(1+7x+12x^2)(1+7x)(1+2x)^2(1+4x)^2$, 
and $n_3=5$.  Let $\k=\overline{\F_2}$. 
As we saw in Example~\ref{ex:mf deleted B3}, we have 
$\Delta^\k_{\A',\delta}(\mathbf{u},x)-
\Delta^\C_{\A',\delta}(\mathbf{u},x)=2u_3 x$. Hence, 
\begin{align*}
\Delta^\k_{\B,\delta_{\B}}(\mathbf{u},x)-
\Delta^\C_{\B,\delta_{\B}}(\mathbf{u},x) &=
\sum_{\rho\in\im(\widehat{\delta}_\C),\, \abs{\rho}=3}u_3 x\cdot
\poin^\C(P(\A'),\rho;x)\\
&= 2u_3\prod_{K\in\A'\colon m'_K \ge 3} (m'_K-2) x^{6},\\
&= 108 u_3x^6,
\end{align*}
by Proposition~\ref{prop:cover of product}.  That is, $H_6(F(\B),\Z)$ 
has $2$-torsion of rank $108$, which appears over $\k$ as an
extra factor of $(1+t+t^2)^{54}$ in the characteristic polynomial 
of the monodromy operator.  In fact,  $\Delta_6^{\k}(t)=
(t-1)^{11968} (t^2+t+1)^{54}$, using the Poincar\'{e} polynomial 
above to compute the multiplicity of the trivial representation.
\end{example}

\begin{figure}
\begin{tikzpicture}[scale=0.8]
\clip (0,0) circle (4.2);
\begin{scope}
\draw (-1,-5) -- (-1,5);
\draw (0,-5) -- (0,5);
\draw (1,-5) -- (1,5);
\draw (-5,-1) -- (5,-1);
\draw (-5,0) -- (5,0);
\draw (-5,1) -- (5,1);
\draw (-5,-5) -- (5,5);
\draw (-5,5) -- (5,-5);
\end{scope}
\begin{scope}
\def\px{2}\def\py{2} 
\foreach \theta in {105,130} {
  \draw[color=red,rotate around={\theta:(\px,\py)}] 
       (\px-6,\py) -- (\px+6,\py);
}
\def\px{-2.3}\def\py{2.3} 
\foreach \theta in {50,68} {
  \draw[color=red,rotate around={\theta:(\px,\py)}] 
       (\px-6,\py) -- (\px+6,\py);
}
\def\px{0}\def\py{-2} 
\foreach \j in {0,1,2,3,4,5,5,6} {
  \draw[color=blue,rotate around={4+9*\j:(\px,\py)}] 
       (\px-6,\py) -- (\px+6,\py);
}
\def\px{1}\def\py{2.6}  
\foreach \j in {0,1,2,3} {
  \draw[color=green!50!blue,rotate around={24+10*\j:(\px,\py)}] 
       (\px-8,\py) -- (\px+6,\py);
}
\def\px{-1}\def\py{-0.5} 
\foreach \theta in {98,109,120,131} {
  \draw[color=green!50!blue,rotate around={\theta:(\px,\py)}] 
       (\px-6,\py) -- (\px+6,\py);
}
\end{scope}
\end{tikzpicture}
\caption{A planar section of $\A'\pc (8,1,3,3,5,5,1,1)$}
\label{fig:B3polar}
\end{figure}
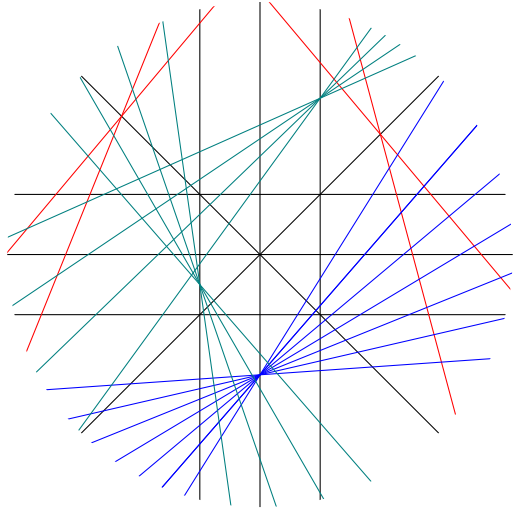

\begin{remark}
\label{rem:fiber-type}
We note that the deleted ${\rm B}_3$ arrangement $\A'$ 
is fiber-type, and thus the (projectivized) complement of 
$\B=\A'\pc m'$ is a $K(\pi,1)$ space.  By the Hamm-L\^{e} Theorem, 
the group $\pi$ is also the fundamental group of the arrangement 
of $27$ lines in $\P^2$ depicted in Figure~\ref{fig:B3polar}.  
Furthermore, the Milnor fiber of $\B$ is a $K(G,1)$ space, 
where $G$ denotes the kernel of  the ``diagonal'' homomorphism 
$\pi\to\Z_{27}$.  

We have shown that $H_6(G,\Z)$ has $2$-torsion.  In principle, 
this could be verified by finding a presentation for the group $G$, 
and then computing a free $\Z{G}$-resolution of $\Z$.  
However, we would be unable to predict the existence of such 
an example, or carry out the computations without the tools 
presented here.
\end{remark}

Since there are families of pointed multinets with arbitrary multiplicity on
the distinguished hyperplane (Example~\ref{ex:monomial}), the same 
polarization argument gives examples for all primes.

\begin{corollary}
\label{cor:pbig}
For every prime $p\ge 2$, there exists an arrangement $\B_p$ 
such that $H_{2p+3}(F(\B_p),\Z)$ has non-trivial $p$-torsion.
\end{corollary}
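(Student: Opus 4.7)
The plan is to apply Theorem \ref{th:torsmf} to the family of pointed
multinets on the monomial arrangements $\A(p,1,3)$ constructed in
Example \ref{ex:monomial}, and to verify that the resulting torsion
degree $1 + n_3(\A'_p, m')$ equals $2p + 3$ by a direct combinatorial
count.

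For an odd prime $p$, I would take $\A_p = \A(p,1,3)$, whose pointed
multinet has distinguished hyperplane $H = \{x = 0\}$ with $m_H = p$.
Letting $\A'_p = \A_p \setminus \{H\}$, I would use the multiplicity
vector $m'$ given by \eqref{eq:monomial m}, choosing the auxiliary
prime $q$ large enough that $q \geq p + 3$. Among the $3p + 2$ entries
of $m'$, the $p$ entries in the middle block equal $1$, while the
remaining $2p + 2$ entries (namely $p$, $q - p$, the $p$ copies of
$q - 1$, and the $p$ copies of $q$) are all at least $3$. Hence
$n_3(\A'_p, m') = 2p + 2$, and Theorem \ref{th:torsmf} produces
$p$-torsion in $H_{1 + n_3(\A'_p, m')}(F(\B_p), \Z) =
H_{2p + 3}(F(\B_p), \Z)$ for $\B_p := \A'_p \pc m'$.

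The remaining case $p = 2$ falls outside the scope of
Example \ref{ex:monomial}, so I would treat it by hand, starting from
the $\operatorname{B}_3$ multinet of Example \ref{ex:deleted B3}. The
multiplicity vector $m' = (8,1,3,3,5,5,1,1)$ used in
Example \ref{ex:polar del B3} already satisfies the hypotheses of
Theorem \ref{th:torsmf}, yielding $n_3 = 5$ and hence torsion in
degree $6$, one less than needed. I would therefore replace one of
the three entries equal to $1$ by the value $7$: the residue class
modulo the cover degree $r = 3$ is preserved, the sum $N' = \sum m'_K$
changes by $6$ and so remains odd, and one additional entry crosses
the threshold $3$. The modified vector thus lifts the same $3$-fold
cyclic character, yields $n_3 = 6$, and gives, via
Theorem \ref{th:torsmf}, $2$-torsion in $H_7(F(\B_2), \Z)$.

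All the substantive content has been absorbed by Theorem \ref{th:torsmf};
the only real work is the bookkeeping of multiplicities. The main mild
obstacle is simultaneously satisfying three constraints on $m'$:
positivity, the correct residue class modulo the cover degree $r$, and
coprimality of $N' = \sum m'_K$ with $p$. The uniform formula
\eqref{eq:monomial m} handles this effortlessly for odd $p$, while the
$p = 2$ case needs a single ad hoc adjustment to raise $n_3$ from $5$
to $6$.
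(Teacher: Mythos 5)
Your proposal is correct and follows the paper's own route: for odd $p$ the paper justifies this corollary exactly as you do, via Example \ref{ex:torsmon}, polarizing the deleted monomial arrangement $\A'_p$ with the multiplicity vector \eqref{eq:monomial m} (for $q-p>2$) and counting $n_3(\A'_p,m')=2p+2$. The one place you go beyond the paper is $p=2$: the paper's monomial example is stated only for odd primes (since the entry $m'_1=p$ would be the forbidden multiplicity $2$), and its explicit $\operatorname{B}_3$ computation lands in degree $6$ rather than $2p+3=7$; your fix --- replacing one entry $1$ of $(8,1,3,3,5,5,1,1)$ by $7$, which preserves the residue modulo $r=3$, keeps $N'=33$ odd, avoids multiplicity $2$, and raises $n_3$ to $6$ --- is a valid and welcome patch of this small imprecision.
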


\begin{example}
\label{ex:torsmon}
Fix an odd prime $p$, and consider the deleted monomial arrangement 
$\A'_p$ from Example~\ref{ex:monomial}.  Let $q$ be a 
prime with $q-p>2$.  Then the multiplicity vector \eqref{eq:monomial m} 
avoids the multiplicity $2$, so we may polarize $\A'_p$ with the multiplicities 
\[
m'=(p,q-p,q-1,\ldots,q-1,1,\ldots,1,q,\ldots,q)
\]
given in \eqref{eq:monomial m}.  Here, $\B_p=\A_p\pc m'$ is an arrangement 
of $(2p+1)q$ hyperplanes, and $n_2=n_3=2p+2$, so the arrangement has 
rank $2p+5$.  Then $H_{2p+3}(F(\B_p),\Z)$ has nonzero $p$-torsion cycles.
Over $\overline{\F_p}$, these cycles are the span of 
order-$q$ monodromy eigenvectors.
\end{example}

This finishes the proof of Theorem \ref{thm:intro1} from the Introduction.

\subsection{Concluding remarks}
\label{subsec:conclude}
It should be noted that our method produces only high-dimensional 
examples of arrangements with torsion in the homology of 
the Milnor fiber.  The smallest example known to us is the 
arrangement $\B$ from Example~\ref{ex:polar del B3}, 
an arrangement of rank $8$ with non-trivial torsion in 
$H_6(F(\B),\Z)$. This leaves open the following questions.

\begin{question}
\label{q:H1}
Is there a hyperplane arrangement $\A$ whose Milnor fiber $F(\A)$ has
torsion in $H_1(F(\A),\Z)$?  
\end{question}

Given the relative importance of projective line arrangements, it is
also natural to ask the following.

\begin{question}
\label{q:r3}
Is there a rank-$3$ arrangement with torsion in the homology of its
Milnor fibre?  (Such an example would also resolve Question~\ref{q:H1}.)
\end{question}

Finally, our construction focusses entirely on ``non-modular torsion'':
that is, $p$-torsion in the homology of cyclic covers, where $p$ does 
not divide the order of the cover.  The modular case is qualitatively 
different.

\begin{question}
\label{q:mod}
Does there exist a hyperplane arrangement $\A$ for which 
$H_*(F(\A),\Z)$ has torsion of order $p$, and $p$ divides 
$\abs{\A}$?
\end{question}

\begin{ack}
We would like to thank the Max-Planck Institute for Mathematics
in Bonn for its support and hospitality while we carried out this
project.
\end{ack}

\newcommand{\arxiv}[1]
{\texttt{\href{http://arxiv.org/abs/#1}{arXiv:#1}}}
\newcommand{\doi}[1]
{\texttt{\href{http://dx.doi.org/#1}{doi:#1}}}

\renewcommand{\MR}[1]
{\href{http://www.ams.org/mathscinet-getitem?mr=#1}{MR#1}}

\newcommand{\MRh}[2]
{\href{http://www.ams.org/mathscinet-getitem?mr=#1}{MR#1 (#2)}}

\end{document}